\newtheorem{teo}{Theorem}[section]
\newtheorem{coro}[teo]{Corollary}
\newtheorem{defn}[teo]{Definition}
\newtheorem{lem}[teo]{Lemma}
\newtheorem{pro}[teo]{Proposition}
\theoremstyle{definition}
\newtheorem{oss}[teo]{Remark}
\newcommand{\D}{\mathbb{D}}
\newcommand{\B}{\mathbb{B}}
\newcommand{\HH}{\mathbb{H}}
\newcommand{\s}{\mathbb{S}}
\newcommand{\rr}{\mathbb{R}}
\newcommand{\N}{\mathbb{N}}
\newcommand{\dx}{\right}
\newcommand{\sx}{\left}
\newcommand{\p}{\partial}
\DeclareMathOperator{\ext}{ext}
\DeclareMathOperator{\prodstar}{\prod \hskip -2.30ex * \hskip 1.3 ex}
\newcommand{\stella}{\prodstar\displaylimits}
\DeclareMathOperator{\ess}{ess}
\title{Quaternionic Hardy Spaces}
\author{Chiara de Fabritiis \footnote{The authors acknowledge the support of G.N.S.A.G.A. of INdAM and MIUR (PRIN Research Project ``Variet\`a reali e complesse: geometria, topologia e analisi armonica''). The third author also acknowledges the support of  MIUR (FIRB Research Project ``Geometria differenziale e teoria geometrica delle funzioni'').}\\
\normalsize Dipartimento di Ing. Industriale e Scienze Matematiche, Universit\`a Politecnica delle Marche\\
\normalsize Via Brecce Bianche 12, 60131 Ancona, Italy,  fabritiis@dipmat.univpm.it\\
\and Graziano Gentili $^*$\\
\normalsize Dipartimento di Matematica e Informatica ``U. Dini'', Universit\`a di Firenze \\
\normalsize Viale Morgagni 67/A, 50134 Firenze, Italy,  gentili@math.unifi.it \\
\and Giulia Sarfatti $^*$\\
\normalsize Dipartimento di Matematica, Universit\`a di Bologna \\
\normalsize Piazza di Porta San Donato 5,  40126 Bologna, Italy,  giulia.sarfatti@unibo.it}
\date{}
\begin{document}

\maketitle

\begin{abstract} The theory of slice regular functions of a quaternionic variable 
extends the notion of holomorphic function to the quaternionic setting. This  fast growing theory is already rich of many results and has significant applications. 
In this setting, the present paper is devoted to introduce and study the quaternionic counterparts of Hardy spaces of holomorphic functions of one complex variable. The basic properties of the theory of  quaternionic Hardy spaces are investigated, and in particular a Poisson-type representation formula, the notions of outer function, singular function and inner function are given. A quaternionic (partial) counterpart of the classical $H^p$-factorization theorem is proved. This last  result assumes a particularly interesting formulation for a large subclass of slice regular functions, where it  is obtained in terms of an outer function, a singular function and a quaternionic Blaschke product.
\end{abstract}

{\bf Mathematics Subject Classification (2010): } 30G35, 30H10

{\bf Keywords:} Functions of hypercomplex variables, Hardy spaces of regular functions, quaternionic outer functions and inner functions.


\section{Introduction}

The theory of slice regular functions of a quaternionic variable (often simply called regular functions) was introduced in \cite{G.S.}, \cite{GSAdvances}, and represents a natural quaternionic counterpart of the theory of complex holomorphic functions. This recent theory has been growing very fast: a detailed  presentation appears in the monograph \cite{libroGSS}, while an extension  to the case of real alternative algebras is discussed in \cite{ghiloniperotti1}. 
The theory of regular functions is presently expanding in many directions, and it has in particular been applied to the study of a non-commutative functional calculus, (see for example the monograph \cite{libro2} and the references therein) and to contribute to the problem of the construction and classification of orthogonal complex structures in open subsets of the space of quaternions (see \cite{OCS}).

Let $\HH$ denote the skew field of quaternions. Set  $\s=\{q\in \HH : q^2=-1\}$ to be the $2-$sphere of purely imaginary units in $\HH$, and for $I\in \s$ let $L_I$ be the complex plane $\rr+\rr I$, so that 
$$
\HH=\bigcup_{I\in \s}L_I.
$$
The natural domains of definition for slice regular functions are called  \emph{slice symmetric domains} (see \cite{libroGSS}). In this paper we will consider slice regular functions defined on the open unit ball $\B=\{q\in \HH : |q|<1 \}$, which is a particular example of slice symmetric domain.

\begin{defn}
A function $f: \B \rightarrow \HH$ is said to be {\em (slice) regular} if, for all $I\in \mathbb{S}$,  its restriction $f_I$ to $\B_I=\B\cap L_I$ is \emph{holomorphic}, i.e., it has continuous partial derivatives and satisfies
$$\overline{\partial}_I f(x+yI):=\frac{1}{2}\Big(\frac{\partial}{\partial x}  +I\frac{\partial}{\partial y}\Big)f_I(x+yI)=0$$
for all $x+yI \in \B_I$.
\end{defn}
\noindent As shown in \cite{GSAdvances}, if the domain of definition is the open unit ball $\B$ of $\HH$, the class of regular functions coincides with the class of convergent power series of type $\sum_{n\ge 0} q^na_n$, with all $a_n\in \HH$.

One of the most fertile chapters of the theory of complex holomorphic functions consists of the theory of Hardy spaces. This theory contains results of great significance that led to important general achievements and subtle applications.
In this paper we define the quaternionic counterpart of complex Hardy spaces, and investigate their basic and fundamental properties. Our definition is the following.
\begin{defn}
Let $f: \B \to \HH$ be a regular function  and let $0<p<+\infty$. Set  
\begin{equation*}
||f||_p=\sup_{I\in \s}\lim_{r\to 1^-}\left(\frac{1}{2\pi} \int_0^{2\pi}|f(re^{I\theta})|^p d\theta\right)^{\frac{1}{p}},
\end{equation*}
and set 
\[||f||_{\infty}=\sup_{q\in \B}|f(q)|.\]
Then, for any $0<p\le +\infty$, we define the quaternionic Hardy space $H^p(\B)$ as
\[H^p(\B)=\{f:\B\to \HH \, \, | \, \,   \text{$f$ is regular and $||f||_p<+\infty$}\}.\]
\end{defn}
\noindent In Section \ref{spaces} we study the main properties and features of the quaternionic $H^p$-norms, we motivate the chosen definition and establish the initial properties of the quaternionic $H^p$ spaces. In Section \ref{boundary} we investigate the
boundary behavior of functions $f$ in $H^p(\B)$, obtaining that  for almost every $\theta\in \rr$, the limit
\[\lim_{r\to 1^-}f(re^{I\theta})=\tilde f_I(e^{I\theta})\]
exists for all $I\in \s$ and in this case it
belongs to $L^p(\p \B_I)$. We then investigate the properties of the boundary values of the $*$-product of two functions, each belonging to some $H^p(\B)$ space.
In section \ref{factorization} possible analogues of {\em outer} and {\em inner} functions and {\em singular} factors on $\B$ are given, whose definitions (when compared with those used in the complex case) clearly resent of the peculiarities of the non commutative quaternionic setting.  
Our  results here include factorization properties of $H^p$ functions. We are able to identify the {\em Blaschke factor} of a function $f$ in $H^p(\B)$, built from the zero set of $f$; then we can exhibit a complete factorization result, in terms of an outer, a singular  and a Blaschke factor, for a subclass of regular functions, namely for the one-slice-preserving functions. 

The cases of $H^2(\B)$ (defined already in \cite{milanesi2}) and $H^{\infty}(\B)$ are special cases, as in the complex setting. The Hardy spaces $H^2(\B)$ have been also treated in \cite{3,4,1,2}, and the space $H^{\infty}(\B)$ briefly appears in  \cite{1}. 
The case of Hardy spaces $H^2$ defined on the half-space is considered in \cite{4, 2}, where the aspect of the reproducing kernels is also investigated. 
Integral representations of slice hyperholomorphic functions have been recently studied in \cite{Gonzales}.

The "slicewise" approach has been also  used in \cite{6,7,8,9} to treat quaternionic Bergman spaces, and in \cite{10,5} to introduce and discuss the analogs of Besov, Bloch, Dirichlet and Fock spaces.
Hardy and Dirichlet spaces in the more general setting of Clifford valued monogenic functions are studied for example in \cite{bernstein}. 

\section{Preliminaries}

We recall in this section some preliminary definitions and results that are essential for the comprehension of the rest of the paper.  All these preliminaries are stated, for the sake of simplicity,  in the particular case of the open unit ball even if they hold in the more general setting of slice symmetric domains. 
The related proofs can be found in the literature, and in particular in \cite{libroGSS}.
Let us begin with a basic result that establishes the connection between the class of regular functions and the class of complex holomorphic 
functions of one complex variable. 
\begin{lem}[Splitting Lemma]\label{split}
If $f$ is a regular function on $\B$, then for every $I \in \mathbb{S}$ and for every $J \in \mathbb{S}$, $J$ orthogonal to $I$, 
there exist two holomorphic functions $F,G:\B_I \rightarrow L_I$, such that for every $z=x+yI \in \B_I$, we have 
\[f_I(z)=F(z)+G(z)J.\]
\end{lem}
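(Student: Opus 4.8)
The plan is to exploit the orthonormal basis structure of $\HH$ determined by the choice of $I$ and $J$. Since $I,J\in\s$ are orthogonal, the quadruple $\{1,I,J,IJ\}$ is an orthonormal $\rr$-basis of $\HH$, and therefore $\HH=L_I\oplus L_IJ$ as a direct sum of real left $L_I$-vector spaces, with $L_I\cap L_IJ=\{0\}$. Hence, for each $z\in\B_I$, the value $f_I(z)\in\HH$ can be written in a unique way as $f_I(z)=F(z)+G(z)J$ with $F(z),G(z)\in L_I$, and this defines two functions $F,G:\B_I\to L_I$. I would then note that the projections onto the two summands (for instance $q\mapsto\tfrac12(q-IqI)$ onto $L_I$, and $q\mapsto\tfrac12(q+IqI)$ onto $L_IJ$) are $\rr$-linear, hence smooth; consequently $F$ and $G$ are as differentiable, in the real sense, as $f_I$ is, and in particular they have continuous partial derivatives with respect to $x$ and $y$.

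Next I would substitute this decomposition into the equation $\overline{\partial}_I f_I=0$. Expanding $\big(\frac{\partial}{\partial x}+I\frac{\partial}{\partial y}\big)(F+GJ)=0$ and using associativity of quaternionic multiplication together with the fact that $L_I$ is closed under multiplication (so that $I\frac{\partial F}{\partial y}\in L_I$ and $I\frac{\partial G}{\partial y}\in L_I$, whence $I\frac{\partial G}{\partial y}J\in L_IJ$), one regroups the four terms as
\[
\Big(\frac{\partial F}{\partial x}+I\frac{\partial F}{\partial y}\Big)+\Big(\frac{\partial G}{\partial x}+I\frac{\partial G}{\partial y}\Big)J=0.
\]
The first summand lies in $L_I$ and the second in $L_IJ$; since the sum is direct, both summands vanish. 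Thus $F$ and $G$ separately satisfy $\frac{\partial}{\partial x}(\,\cdot\,)+I\frac{\partial}{\partial y}(\,\cdot\,)=0$ on $\B_I$, which, after identifying $(L_I,I)$ with $(\cc,i)$, is exactly the assertion that $F$ and $G$ are holomorphic from $\B_I\subset L_I$ to $L_I$.

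The argument is essentially a bookkeeping exercise, so no deep obstacle is expected; the only point needing care is the regrouping step, where one must check that $I(\xi J)=(I\xi)J$ and $I\xi\in L_I$ for every $\xi\in L_I$, so that the $L_I$-part and the $L_IJ$-part of the expanded equation do not mix. Both facts are immediate, the first from associativity of quaternionic multiplication and the second from $L_I$ being a subfield of $\HH$. A secondary, routine point is the passage from the $C^1$ regularity of $f_I$ to that of its components $F$ and $G$, which is legitimate precisely because the component projections are $\rr$-linear.
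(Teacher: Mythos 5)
Your proof is correct and is the standard argument: the paper itself does not prove the Splitting Lemma but defers to the literature (\cite{libroGSS}), where essentially this same decomposition $\HH = L_I \oplus L_I J$ and regrouping of the Cauchy--Riemann operator is used. All the delicate points are handled properly: the projections $q\mapsto\tfrac12(q\mp IqI)$ are indeed the $\rr$-linear projections onto $L_I$ and $L_IJ$, and the separation of the expanded equation into its $L_I$- and $L_IJ$-components is justified exactly as you say.
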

%

\noindent We can recover the values of a regular function defined on $\B$ from its values on a single 
slice $L_I$ (see \cite{kernel, ext}).

\begin{teo}[Representation Formula]\label{RF}
Let $f$ be a regular function on  $\B$ and let $I\in \mathbb{S}$. Then, for all $x+yJ \in \B$ with $J\in  \mathbb{S}$,
 the following 
equality holds
\begin{equation*}
f(x+yJ)=\frac{1}{2}\left[ f(x+yI)+f(x-yI)\right]+\frac{JI}{2}\left[f(x-yI)-f(x+yI) \right].
\end{equation*}
In particular the function $f$ is {\em affine} on each sphere of the form $x+y\s$ contained in $\B$, namely there exist $b, c \in \HH$ such that $f(x+yJ)=b+Jc$ \;  for all $J\in \s$ . 
\end{teo}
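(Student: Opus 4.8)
The plan is to use the fact, recalled just after Definition~1.1, that on $\B$ every regular function is the sum of a power series $f(q)=\sum_{n\ge 0}q^na_n$ with $a_n\in\HH$, convergent for $|q|<1$. The elementary observation that makes everything work is that the real part $x$ is central in $\HH$, so the binomial theorem applies and, for every $J\in\s$,
\[
(x+yJ)^n=\sum_{k=0}^n\binom{n}{k}x^{n-k}y^kJ^k=\alpha_n(x,y)+J\,\beta_n(x,y),
\]
where $\alpha_n(x,y)=\RRe\big((x+iy)^n\big)$ and $\beta_n(x,y)=\IIm\big((x+iy)^n\big)$ are \emph{real} polynomials that do not depend on $J$. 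Since $|x+yJ|^2=x^2+y^2=|x+iy|^2$, one has $|\alpha_n(x,y)|,|\beta_n(x,y)|\le|x+yJ|^n$, and because $\sum_n|a_n|\rho^n<+\infty$ for every $\rho<1$, the series $A(x,y):=\sum_n\alpha_n(x,y)\,a_n$ and $B(x,y):=\sum_n\beta_n(x,y)\,a_n$ converge absolutely and uniformly on compact subsets of each $\B_I$. Keeping the coefficients $a_n$ on the right (legitimate since $\alpha_n,\beta_n$ are real, hence central), I would thus write, for every $x+yJ\in\B$,
\[
f(x+yJ)=\sum_{n\ge 0}\big(\alpha_n(x,y)+J\beta_n(x,y)\big)a_n=A(x,y)+J\,B(x,y).
\]

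Next I would specialize this identity to the slice $L_I$. Taking $J=I$ gives $f(x+yI)=A(x,y)+I\,B(x,y)$; since $\overline{(x+iy)^n}=(x-iy)^n$ we have $\alpha_n(x,-y)=\alpha_n(x,y)$ and $\beta_n(x,-y)=-\beta_n(x,y)$, so replacing $y$ by $-y$ yields $f(x-yI)=A(x,y)-I\,B(x,y)$. Adding and subtracting these two relations and using $I^{-1}=-I$ gives
\[
A(x,y)=\tfrac12\big[f(x+yI)+f(x-yI)\big],\qquad
B(x,y)=\tfrac{I}{2}\big[f(x-yI)-f(x+yI)\big].
\]
Substituting these expressions back into $f(x+yJ)=A(x,y)+J\,B(x,y)$ produces exactly the asserted formula, since $J\cdot\tfrac{I}{2}\big[\,\cdot\,\big]=\tfrac{JI}{2}\big[\,\cdot\,\big]$.

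For the final assertion I would simply fix $x,y$ with $x+y\s\subset\B$ and set $b=A(x,y)=\tfrac12[f(x+yI)+f(x-yI)]$ and $c=B(x,y)=\tfrac{I}{2}[f(x-yI)-f(x+yI)]$; then $f(x+yJ)=b+Jc$ for all $J\in\s$, which is the claimed affinity on the sphere $x+y\s$.

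I do not expect a genuine obstacle: the argument is essentially bookkeeping, and the only points requiring care are keeping the $a_n$ on the right throughout (which is why one wants $\alpha_n,\beta_n$ real rather than merely complex) and justifying the term-by-term manipulations, which follows from the normal convergence of the power series on compact subsets of $\B$. A coordinate-free alternative, avoiding power series, would be to \emph{define} $g(x+yJ)$ by the right-hand side, verify by a direct $\overline\partial_J$-computation that $g$ is regular and that $g_I=f_I$, and then conclude $g=f$ by the Identity Principle for regular functions; this shifts the (mild) effort into checking the regularity of $g$, but is equally elementary.
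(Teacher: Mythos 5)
Your argument is correct. Note, however, that the paper itself gives no proof of this statement: the Representation Formula is recalled in the Preliminaries with a pointer to the literature (\cite{kernel}, \cite{ext}, \cite{libroGSS}), so there is no in-paper proof to compare against. On its own merits, your power-series computation is sound: the centrality of $x$ justifies the binomial expansion, the identification $(x+yJ)^n=\alpha_n(x,y)+J\beta_n(x,y)$ with $\alpha_n,\beta_n$ the real and imaginary parts of $(x+iy)^n$ is exactly right, the bound $|\alpha_n|,|\beta_n|\le (x^2+y^2)^{n/2}$ gives the absolute convergence needed to split the series into $A(x,y)+JB(x,y)$, and keeping the $a_n$ on the right of the real scalars $\alpha_n,\beta_n$ is the one non-commutativity issue, which you handle. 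Solving for $A$ and $B$ from the two slice values and resubstituting gives the formula, and the affine statement is immediate. The one limitation worth flagging is that this route is genuinely tied to the ball (or, more generally, to domains where $f$ admits a global power-series expansion), whereas the standard proofs in the cited references work on arbitrary slice symmetric domains by the second strategy you sketch: define $g(x+yJ)$ by the right-hand side, check $\overline\partial_J g=0$ using only $\overline\partial_I f_I=0$, and invoke the Identity Principle. Since the theorem as stated here concerns only $\B$, where regular functions are exactly the convergent series $\sum q^n a_n$, your more elementary argument fully suffices.
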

\noindent Thanks to the Representation Formula it is possible to prove that we can estimate the maximum modulus of a function with its maximum 
modulus on each slice, (see \cite{weierstrass}).
\noindent In the special case of a regular function $f$ that maps a slice $L_I$ to itself, 
we obtain that the maximum (and minimum) modulus 
of $f$ is actually attained on the preserved slice.
\begin{pro}\label{maxminslice}
Let $f$ be a regular function on $\B$ such that $f(\B_I)\subset L_I$ for some $I\in \s$. 
Then, for any $x+y\s \subset \B$,
\[\max_{J\in \s}|f(x+yJ)|=\max\{|f(x+yI)|, |f(x-yI)|\}\]
and
\[\min_{J\in \s}|f(x+yJ)|=\min\{|f(x+yI)|, |f(x-yI)|\}.\]
\end{pro}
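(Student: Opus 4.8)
The plan is to use the Representation Formula to reduce the maximum/minimum modulus on a sphere $x+y\s$ to an explicit optimization over $\s$, and then to exploit the hypothesis $f(\B_I)\subset L_I$ to make that optimization trivial. Fix a sphere $x+y\s\subset \B$. By Theorem \ref{RF}, for every $J\in\s$ we have $f(x+yJ)=b+Jc$ where $b=\frac12[f(x+yI)+f(x-yI)]$ and $c=\frac{I}{2}[f(x-yI)-f(x+yI)]$. The key observation is that since $f_I$ maps $\B_I$ into $L_I=\rr+\rr I$, both $f(x+yI)$ and $f(x-yI)$ lie in $L_I$; hence $b,c\in L_I$, and moreover $c$ is actually of the form (real multiple of $I$) times an element of $L_I$, so in fact one checks $b\in L_I$ and $c\in L_I$ with the extra structure that $b$ and $cI^{-1}=-cI$ are real-linear combinations of $f(x\pm yI)$. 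More precisely, writing $f(x+yI)=\alpha$, $f(x-yI)=\beta$ with $\alpha,\beta\in L_I$, we get $b=\frac{\alpha+\beta}{2}$ and $c=\frac{I(\beta-\alpha)}{2}$.

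The second step is the modulus computation. For $J\in\s$ with $J\perp I$ one has $J$ anticommuting with $I$, and for general $J\in\s$ we can use the decomposition $J=aI+J'$ appropriately; but the cleanest route is to compute $|b+Jc|^2=|b|^2+|c|^2+2\,\RRe(\bar b Jc)$ and observe that as $J$ ranges over $\s$ the quantity $\RRe(\bar b Jc)$ ranges over a symmetric interval $[-\rho,\rho]$, so that $\max_{J\in\s}|f(x+yJ)|^2=|b|^2+|c|^2+2\rho$ and $\min_{J\in\s}|f(x+yJ)|^2=|b|^2+|c|^2-2\rho$, attained at antipodal units. I would then show, using $b=\frac{\alpha+\beta}{2}$, $c=\frac{I(\beta-\alpha)}{2}$ with $\alpha,\beta\in L_I$, that $|b|^2+|c|^2=\frac{|\alpha|^2+|\beta|^2}{2}$ and that the extremal value $\rho$ equals $\frac{|\alpha|^2-|\beta|^2}{2}$ in absolute value — equivalently that the extreme values of $|b+Jc|^2$ over $J\in\s$ are exactly $|\alpha|^2$ and $|\beta|^2$. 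Indeed the units $\pm I\in\s$ themselves are admissible choices of $J$, and plugging $J=I$ gives $f(x+yI)=\alpha$ while $J=-I$ gives $f(x-yI)=\beta$ (this is just consistency of the Representation Formula); since the max and min over $\s$ are attained at an antipodal pair, and $\{I,-I\}$ already realizes the two values $|\alpha|,|\beta|$, these must be the extreme values. This yields $\max_{J\in\s}|f(x+yJ)|=\max\{|\alpha|,|\beta|\}$ and $\min_{J\in\s}|f(x+yJ)|=\min\{|\alpha|,|\beta|\}$, which is the claim.

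The main obstacle is the linear-algebra fact that the extremum of $|b+Jc|$ over $J\in\s$ is attained at a pair of antipodal imaginary units \emph{and}, crucially, that this antipodal pair can be taken to be $\{I,-I\}$ in our situation — i.e. that no other $J\in\s$ does better than $\pm I$. This is where the hypothesis $f(\B_I)\subset L_I$ is essential: for a generic regular function the optimal $J$ need not be $\pm I$, but here $b$ and $c$ both lie in the commutative plane $L_I$ (with $c\in IL_I=L_I$), and a short computation shows $\RRe(\bar b Jc)$ is extremized over $\s$ precisely by $J=\pm I$, because $\bar b c\in L_I$ and for $w\in L_I$ one has $\max_{J\in\s}\RRe(Jw)=|\IIm_I w|$ attained at $J=\pm I$ when $w$ is purely imaginary in $L_I$, and a direct check gives $\bar b c = \frac{(\bar\alpha+\bar\beta)I(\beta-\alpha)}{4}$, whose real part behaves as required. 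I would carry out this last verification by writing $\alpha=s+tI$, $\beta=u+vI$ with $s,t,u,v\in\rr$ and computing directly; it reduces to the elementary identity $\max/\min$ of $|b+Jc|^2$ over $\s$ being $|\alpha|^2$ and $|\beta|^2$. Everything else — the Representation Formula, the affinity on spheres — is quoted from the Preliminaries.
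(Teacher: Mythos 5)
Your argument is correct, and it is essentially the standard proof of this fact (the paper itself does not prove Proposition \ref{maxminslice}; it is recalled in the Preliminaries with a pointer to the literature, where the proof runs along exactly the lines you describe: Representation Formula, then the identity $|b+Jc|^2=|b|^2+|c|^2+2\RRe(\bar b Jc)$, then the observation that the cross term is a linear functional of $J\in\s\subset\rr^3$ governed by $\IIm(c\bar b)$). One caution about how you have written it: the sentence in your second paragraph claiming that because the extrema are attained at \emph{some} antipodal pair and $\{I,-I\}$ realizes the values $|\alpha|,|\beta|$, ``these must be the extreme values'' is a non sequitur on its own --- since $J\mapsto|b+Jc|^2$ is affine on $\s$, \emph{every} antipodal pair realizes two values summing to $\max+\min$, so realizing an antipodal pair tells you nothing about extremality. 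Fortunately your third paragraph supplies the actual reason: $b,c\in L_I$ forces $c\bar b\in L_I$, hence $\IIm(c\bar b)\in\rr I$, hence the linear functional $J\mapsto\RRe\big((c\bar b)J\big)=-\langle\IIm(c\bar b),J\rangle$ is extremized exactly at $J=\pm I$ (and is constant in the degenerate case $|\alpha|=|\beta|$, where there is nothing to prove). With that verification carried out --- and it does check out, e.g. $c\bar b=\tfrac14\big(\alpha\bar\beta-\beta\bar\alpha\big)I+\tfrac14(|\beta|^2-|\alpha|^2)I$ up to bookkeeping, giving $2|\IIm(c\bar b)|=\tfrac12\big||\alpha|^2-|\beta|^2\big|$ and hence extreme values $|\alpha|^2$ and $|\beta|^2$ --- the proof is complete. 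I would simply reorder the write-up so that the $c\bar b\in\rr+\rr I$ computation is the stated justification, rather than the antipodal-pair remark.
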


\noindent Holomorphic functions defined on a disc $\B_I$,  in the complex plane $L_I$, extend uniquely to $\B$. 

\begin{lem}[Extension Lemma]\label{extensionlemma}
Let  $I \in \s$. If $f_I: \B_I \rightarrow \HH$ is holomorphic, then setting
\begin{equation*}
f(x+yJ)=\frac{1}{2}[f_I(x+yI)+f_I(x-yI)] +J\frac{I}{2}[f_I(x-yI)-f_I(x+yI)]
\end{equation*}
extends $f_I$ to a regular function $f:\B \rightarrow \HH$. Moreover $f$ is the unique extension and it is denoted by $\ext(f_I)$.
\end{lem}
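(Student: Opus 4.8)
The plan is to verify by a direct computation that the function $f$ given by the displayed formula is regular and restricts to $f_I$ on $\B_I$, and then to read off uniqueness from the Representation Formula. First I would check that the formula really defines a function on all of $\B$: a quaternion with nonzero imaginary part is of the form $x+yJ$ with $y>0$ and $J\in\s$ uniquely determined, while if the imaginary part vanishes the second summand is $0$ and the right-hand side equals $f_I(x)$ regardless of $J$; moreover replacing the pair $(y,J)$ by $(-y,-J)$ leaves the right-hand side unchanged, so there is no ambiguity. Setting $J=I$ and using $I^2=-1$ collapses the formula to $f_I(x+yI)$, so $f$ does extend $f_I$. Since $f_I$ is holomorphic it is in particular of class $C^1$, and hence the restriction of $f$ to any slice $\B_K$ has continuous partial derivatives.

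The heart of the proof is to show that $\overline{\partial}_K f_K=0$ on $\B_K$ for every $K\in\s$. Fix such a $K$, write $q=x+yK$, and put $\phi(x,y)=f_I(x+yI)$ and $\psi(x,y)=f_I(x-yI)$, so that $f(x+yK)=u(x,y)+Kv(x,y)$ with $u=\frac{1}{2}(\phi+\psi)$ and $v=\frac{I}{2}(\psi-\phi)$. Expanding $\overline{\partial}_K f_K=\frac{1}{2}(\partial_x+K\partial_y)(u+Kv)$ and using $K^2=-1$ gives
\[
\overline{\partial}_K f_K=\frac{1}{2}\big((u_x-v_y)+K(v_x+u_y)\big),
\]
so everything reduces to proving the Cauchy--Riemann system $u_x=v_y$ and $u_y=-v_x$ for the pair $(u,v)$. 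For this I would invoke the holomorphy of $f_I$ in the form $\phi_y=I\phi_x$ and $\psi_y=-I\psi_x$ (which is precisely $\overline{\partial}_I f_I=0$ read along the lines $x\pm yI$); substituting into the four partial derivatives and simplifying with $I^2=-1$ yields $u_x=\frac{1}{2}(\phi_x+\psi_x)=v_y$ and $u_y=\frac{I}{2}(\phi_x-\psi_x)=-v_x$. Therefore $f_K$ is holomorphic for every $K\in\s$, i.e.\ $f$ is regular on $\B$.

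Finally, uniqueness is immediate from Theorem \ref{RF}: if $g$ is regular on $\B$ with $g_I=f_I$, then for every $x+yJ\in\B$ the Representation Formula expresses $g(x+yJ)$ through $g(x\pm yI)=f_I(x\pm yI)$ by exactly the formula defining $f$ (note $\tfrac{JI}{2}=J\tfrac{I}{2}$), so $g=f$; this justifies the notation $\ext(f_I)$. The step I expect to be most delicate is the core computation of the second paragraph: since $f_I$ is quaternion-valued and the units $I$, $J$, $K$ do not commute, one must keep every quaternionic factor on the correct side — in particular noting that differentiating $f_I(x\pm yI)$ with respect to $y$ brings the unit $I$ in on the \emph{left}, as forced by $\overline{\partial}_I f_I=0$, and not on the right.
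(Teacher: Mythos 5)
Your proof is correct: the well-definedness check, the direct verification of the Cauchy--Riemann system $u_x=v_y$, $u_y=-v_x$ from $\phi_y=I\phi_x$ and $\psi_y=-I\psi_x$ (with all units kept on the left), and the uniqueness via Theorem \ref{RF} are all sound. The paper itself states this lemma without proof, deferring to the cited literature, and your argument is essentially the standard one given there, so there is nothing to compare beyond noting agreement.
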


\noindent The pointwise product of two regular functions is not, in general, regular. To guarantee the regularity we have to use a different  multiplication operation, the $*$-product, which extends the one classically defined for (polynomials and) power series with coefficients in a non-commutative field.


\begin{defn}\label{R-starprodotto}\index{regular product} \index{$*$-product}
Let $f,g$ be regular functions on $\B$. Choose $I,J \in \s$ with $I \perp J$ and let $F,G,H,K$ be holomorphic functions from $\B_I$ to $L_I$ such that $f_I = F+GJ, g_I = H+KJ$. Consider the holomorphic function defined on $\B_I$ by
\begin{equation*}
f_I*g_I(z) = \left[F(z)H(z)-G(z)\overline{K(\bar z)}\right] + \left[F(z)K(z)+G(z)\overline{H(\bar z)} \right]J.
\end{equation*}
Its regular extension $\ext(f_I*g_I)$ is called the \emph{regular product} (or \emph{$*$-product}) of $f$ and $g$ and it is denoted by $f*g$.
\end{defn}

\noindent Notice that the $*$-product is associative but generally is not commutative. Its connection with the usual pointwise product is stated by the following result.

\begin{pro}\label{trasf}
Let $f$ and $g$ be regular functions on $\B$. Then
\begin{equation}\label{prodstar}
f*g(q)= \left\{ \begin{array}{ll}
 f(q)g(f(q)^{-1}qf(q)) & \text{if} \quad f(q)\neq 0\\
0 & \text{if} \quad f(q)=0
\end{array}
\right.
\end{equation}
In particular $f*g(q) = 0$ if and only if $f(q) = 0$ or $f(q) \neq 0$ and $g(f(q)^{-1} q f(q))=0$.
\end{pro}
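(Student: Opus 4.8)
The plan is to reduce everything to the power series representation of regular functions on $\B$. I would write $f(q) = \sum_{n\ge 0} q^n a_n$ and $g(q) = \sum_{m\ge 0} q^m b_m$ with $a_n, b_m \in \HH$, both series converging absolutely and uniformly on compact subsets of $\B$. Since the $*$-product is the natural extension to $\B$ of the Cauchy product of power series with (noncommutative) coefficients — so that $(q^n a)*(q^m b) = q^{n+m}(ab)$, which is readily checked from Definition \ref{R-starprodotto} — on such series it takes the form
\[
f*g(q) \;=\; \sum_{N\ge 0} q^N \sum_{n+m=N} a_n b_m \;=\; \sum_{m,n\ge 0} q^{n+m} a_n b_m \;=\; \sum_{m\ge 0} q^m\Big(\sum_{n\ge 0} q^n a_n\Big) b_m \;=\; \sum_{m\ge 0} q^m f(q)\, b_m ,
\]
where the regroupings are legitimate because $\sum_{m,n} |q|^{n+m}|a_n||b_m| = \big(\sum_n |q|^n |a_n|\big)\big(\sum_m |q|^m |b_m|\big) < +\infty$ for $q \in \B$, and the last equality uses continuity of multiplication. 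Thus the whole statement reduces to analysing $\sum_{m\ge 0} q^m f(q)\, b_m$.

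If $f(q) = 0$, this sum is $0$, which is the second line of \eqref{prodstar}. If instead $f(q) \ne 0$, I would set $w = f(q)^{-1} q f(q)$. Conjugation by a nonzero quaternion is an isometry, so $|w| = |q| < 1$ and $w \in \B$; hence $g(w) = \sum_{m\ge 0} w^m b_m$ converges absolutely, and $w^m = f(q)^{-1} q^m f(q)$ for every $m \ge 0$. Factoring $f(q)$ out on the left of this convergent series gives
\[
\sum_{m\ge 0} q^m f(q)\, b_m \;=\; f(q)\sum_{m\ge 0} f(q)^{-1} q^m f(q)\, b_m \;=\; f(q) \sum_{m\ge 0} w^m b_m \;=\; f(q)\, g(w) ,
\]
which is the first line of \eqref{prodstar}. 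The concluding ``if and only if'' is then immediate: $f*g(q) = 0$ when $f(q) = 0$, and, since $f(q)$ is invertible when $f(q) \ne 0$, in the remaining case $f*g(q) = f(q)g(w)$ vanishes precisely when $g(f(q)^{-1} q f(q)) = 0$.

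There is no deep obstacle here; the only point requiring care — the ``hard part'', such as it is — is the term-by-term manipulation of the series, i.e.\ the double-sum rearrangement, the identity $(f(q)^{-1}qf(q))^m = f(q)^{-1}q^m f(q)$, and pulling $f(q)$ through the sum, all of which are covered by absolute convergence inside $\B$. Should one prefer to avoid power series altogether, a purely computational alternative is available: fix $I$ with $q = z = x+yI \in \B_I$ and $J \in \s$ with $J \perp I$, write $w = x + yL$ with $L = f(q)^{-1} I f(q) \in \s$, expand $g(w)$ via the Representation Formula (Theorem \ref{RF}) in terms of $g_I(z) = H(z) + K(z)J$ and $g_I(\bar z) = H(\bar z) + K(\bar z)J$, and left-multiply by $f(q) = F(z) + G(z)J$ to match the defining expression for $f_I * g_I(z)$ in Definition \ref{R-starprodotto}; there the whole effort goes into bookkeeping the noncommutative products such as $JL$ and $J\,\overline{K(\bar z)}$.
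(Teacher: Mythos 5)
Your proof is correct: the monomial identity $(q^n a)*(q^m b)=q^{n+m}(ab)$ does follow from Definition \ref{R-starprodotto} by a direct splitting computation, the rearrangements are justified by absolute convergence in $\B$, and the case split on $f(q)$ together with $w^m=f(q)^{-1}q^mf(q)$ gives exactly \eqref{prodstar}. The paper states this proposition as a preliminary without proof, deferring to the literature (\cite{libroGSS}), and your power-series argument is precisely the standard one given there, so there is nothing to add.
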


\noindent The regular product coincides with the pointwise product for the  special class of regular functions defined as follows.

\begin{defn}\label{R-slicepreserving}\index{slice preserving regular function}
A regular function $f: \B \to \HH$ such that $f(\B_I)\subseteq L_I$ for all $I \in \s$  is called a \emph{slice preserving} regular function.
\end {defn}

\begin{lem}\label{realproduct}
Let $f,g$ be regular functions on $\B$. If $f$ is slice preserving, then $fg$ is a regular function on $\B$ and $f*g= fg=g*f$.
\end{lem}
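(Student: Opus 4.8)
The plan is to reduce the statement to two facts available above: the description of regular functions on $\B$ as convergent power series $\sum_{n\ge0}q^na_n$, and the transformation formula of Proposition \ref{trasf}. First I would turn the hypothesis into an algebraic condition on $f$. Since $\bigcap_{I\in\s}L_I=\rr$, a slice preserving $f$ maps $\B\cap\rr$ into $\rr$; writing $f(q)=\sum_{n\ge0}q^na_n$ and using that $\sum_n x^na_n\in\rr$ for all small real $x$, this forces every coefficient $a_n$ to be real. From this I record two consequences: (i) for $q\in\B_I$ we have $f(q)=\sum_n q^na_n\in L_I$, so $q$ and $f(q)$ commute; (ii) for every $p\in\B$ and $s\in\HH\setminus\{0\}$, $f(s^{-1}ps)=\sum_n s^{-1}p^ns\,a_n=s^{-1}f(p)s$, since each real $a_n$ commutes past $s$.

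Next I would compute the three products with Proposition \ref{trasf}. For $q\in\B_I$ with $f(q)\ne0$, consequence (i) gives $f(q)^{-1}qf(q)=q$, whence $f*g(q)=f(q)\,g\big(f(q)^{-1}qf(q)\big)=f(q)g(q)$; and $f*g(q)=0=f(q)g(q)$ when $f(q)=0$. So $f*g=fg$ on all of $\B$. Symmetrically, for $q$ with $g(q)\ne0$, consequence (ii) gives $g*f(q)=g(q)\,f\big(g(q)^{-1}qg(q)\big)=g(q)\,g(q)^{-1}f(q)g(q)=f(q)g(q)$, and $g*f(q)=0=f(q)g(q)$ when $g(q)=0$; so $g*f=fg$ as well. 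Hence $f*g=fg=g*f$, and since $f*g$ is regular by construction, the pointwise product $fg$ is regular too.

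Of the three equalities, $f*g=fg$ drops out immediately once $q$ and $f(q)$ are known to lie in a common slice; the non-obvious one is $g*f=fg$, and its crux is the conjugation-equivariance (ii) of slice preserving functions (for a general regular $f$ the product $g*f$ differs from $fg$). As an alternative not using Proposition \ref{trasf}, one may work on a single slice via the Splitting Lemma: writing $f_I=F+GJ$, $g_I=H+KJ$ with $J\perp I$ and $F,G,H,K\colon\B_I\to L_I$ holomorphic, the slice-preserving hypothesis forces $G\equiv0$ and $\overline{F(\bar z)}=F(z)$; substituting in the formula of Definition \ref{R-starprodotto} yields $f_I*g_I=f_Ig_I$ at once and $g_I*f_I=f_Ig_I$ after invoking the commutativity of $L_I$; one then checks $\overline{\partial}_K(fg)\equiv0$ on each slice $\B_K$ --- pulling $K$ past the $L_K$-valued factor $f(x+yK)$ --- to conclude that $fg$ is regular, and finishes by uniqueness of the regular extension (Lemma \ref{extensionlemma}).
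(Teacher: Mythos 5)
Your argument is correct. The paper states this lemma without proof (it is recalled from the literature, in particular from \cite{libroGSS}), so there is no in-paper argument to compare against; your first route --- real coefficients from slice preservation, hence $q$ and $f(q)$ commute and $f(s^{-1}ps)=s^{-1}f(p)s$, then Proposition \ref{trasf} --- is the standard proof and is complete, and your alternative via the Splitting Lemma is also sound.
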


%


\noindent The following operations are naturally defined in order to study the zero set of regular functions.

\begin{defn}\label{f^c}
Let $f$ be a regular function on  $\B$ and suppose that for $z\in \B_I$, the splitting of $f$ 
with respect to $J$ ($J\in \s$ orthogonal to $I$) is $f_I(z)=F(z)+G(z)J$. Consider the holomorphic function
$f_I^c(z)=\overline{F(\overline{z})}-G(z)J.$
The {\em regular conjugate} of $f$ is the function defined  by  
$f^c(q)=\ext(f_I^c)(q).$
The {\em symmetrization} of $f$ is the function defined by
$f^s(q)=f*f^c(q)=f^c*f(q).$
\noindent Both $f^c$ and $f^s$ are regular functions on $\B$.
\end{defn}

\noindent We now recall a result (proven in \cite{sarfatti}) that relates the norm of a regular function $f:\B \to \HH$ and that of its regular conjugate.
\begin{pro}\label{norma}
Let $f$ be a regular function on $\B$. For any sphere of the form $x+y\s$ contained in $\B$, the following equalities hold true 
\[\max_{I \in\s}|f(x+yI)|=\max_{I \in\s}|f^c(x+yI)| \quad \ \text{and} \ \quad \min_{I \in \s}|f(x+yI)|=\min_{I \in \s}|f^c(x+yI)|.
\]
\end{pro}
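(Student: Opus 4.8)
The plan is to fix a sphere $x+y\s\subset\B$ (the case $y=0$ being trivial) and to compare $f$ and $f^c$ only through their restrictions to this sphere. By the Representation Formula (Theorem~\ref{RF}) both restrictions are affine: there are $b,c\in\HH$ with $f(x+yK)=b+Kc$ for all $K\in\s$, and $b=\tfrac12[f(x+yI)+f(x-yI)]$, $c=\tfrac{I}{2}[f(x-yI)-f(x+yI)]$ for any chosen $I\in\s$. Fix also $J\in\s$ with $J\perp I$ and write the splitting $f_I=F+GJ$ of Lemma~\ref{split}. The first key step is to run the very same computation for $f^c$, starting from $f^c_I(z)=\overline{F(\overline z)}-G(z)J$ (Definition~\ref{f^c}) and using Theorem~\ref{RF} once more, and to check that on the same sphere
\[
f^c(x+yK)=\overline b+K\overline c\qquad\text{for all }K\in\s,
\]
where $\overline b,\overline c$ denote the quaternionic conjugates of $b,c$; here the only nontrivial ingredients are the identity $\overline{u+vJ}=\overline u-vJ$ for $u,v\in L_I$ and the commutation rule $Jw=\overline w\,J$ for $w\in L_I$.

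Granted this, the second step is an elementary modulus computation on the sphere. For $K\in\s$ one has $\overline K=-K$, hence
\[
|b+Kc|^2=|b|^2+|c|^2-2\RRe(b\overline c\,K)=|b|^2+|c|^2+2\langle\IIm(b\overline c),K\rangle,
\]
where $\langle\cdot,\cdot\rangle$ is the standard scalar product of $\HH\cong\rr^4$ and $\IIm$ denotes the imaginary part. Since $\langle v,K\rangle$ runs over $[-|v|,|v|]$ as $K$ runs over $\s$, this gives
\[
\max_{K\in\s}|f(x+yK)|^2=|b|^2+|c|^2+2|\IIm(b\overline c)|
\]
and $\min_{K\in\s}|f(x+yK)|^2=|b|^2+|c|^2-2|\IIm(b\overline c)|$; moreover, using $f^c(x+yK)=\overline b+K\overline c$ together with $|\overline b|=|b|$ and $|\overline c|=|c|$, the identical formulas hold for $f^c$ with $\IIm(\overline b\,c)$ in place of $\IIm(b\overline c)$. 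Thus the whole statement reduces to the single equality $|\IIm(b\overline c)|=|\IIm(\overline b\,c)|$.

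This last point is pure quaternionic algebra. Since $|b\overline c|=|b|\,|c|=|\overline b\,c|$ and $|p|^2=(\RRe p)^2+|\IIm p|^2$ for every $p\in\HH$, it suffices to show $\RRe(b\overline c)=\RRe(\overline b\,c)$. But $\RRe(b\overline c)=\RRe(\overline c b)$ by cyclicity of the real part, while $\RRe(\overline c b)=\RRe(\overline b\,c)$ because $\overline{\overline c b}=\overline b\,c$ and conjugate quaternions have the same real part; this closes the argument.

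The step I expect to be the real obstacle is the first one, namely the identity $f^c(x+yK)=\overline b+K\overline c$ on each sphere: the computation is short, but it forces one to be scrupulous about which conjugation (that of $L_I$ or that of $\HH$) acts on which factor and about commuting $J$ past elements of $L_I$ inside the Representation Formula. Everything after that is a one-line manipulation; the only other thing worth a word is that the suprema and infima over $\s$ are attained, which is immediate by continuity and compactness of $\s$.
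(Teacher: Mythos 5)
Your proof is correct and complete; note that the paper itself gives no proof of Proposition \ref{norma}, deferring to \cite{sarfatti}, where the argument runs along essentially the same lines. The one step you flag as delicate --- that $f(x+yK)=b+Kc$ forces $f^c(x+yK)=\overline b+K\overline c$ --- does check out via the Splitting Lemma computation you outline (it can also be seen instantly from the power series: $f(q)=\sum_{n\ge 0} q^na_n$ gives $f^c(q)=\sum_{n\ge 0} q^n\overline{a_n}$, and $(x+yK)^n=\alpha_n+K\beta_n$ with $\alpha_n,\beta_n\in\rr$ independent of $K$), and the remaining modulus computation $|b+Kc|^2=|b|^2+|c|^2+2\langle\IIm(b\overline c),K\rangle$ together with the identity $\RRe(b\overline c)=\RRe(\overline b\,c)$ is sound.
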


\noindent Before we give the complete characterization of the zero set of a regular function, let us recall another result in this setting, 
consequence of the Representation Formula \ref{RF}.
\begin{pro}\label{zerislice}
Let $f$ be a regular function on $\B$ such that $f(\B_I)\subset L_I$ for some $I\in\s$. 
If $f(x+yJ)=0$ for some $J\in\s\setminus \{\pm I\}$, then $f(x+yK)=0$ for any $K\in\s$.
\end{pro}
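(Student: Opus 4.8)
The plan is to exploit the Representation Formula together with the hypothesis that $f$ preserves the slice $L_I$, so that on the sphere $x+y\s$ the function $f$ has the affine form $f(x+yJ)=b+Jc$ with $b,c$ determined by $f(x+yI)$ and $f(x-yI)$. First I would write, from Theorem \ref{RF}, that
\[
b=\tfrac12\bigl[f(x+yI)+f(x-yI)\bigr],\qquad c=\tfrac{I}{2}\bigl[f(x-yI)-f(x+yI)\bigr],
\]
and observe that since $f(\B_I)\subset L_I$ both $f(x+yI)$ and $f(x-yI)$ lie in $L_I$, hence $b,c\in L_I$ as well. The hypothesis $f(x+yJ)=0$ for some $J\in\s\setminus\{\pm I\}$ then reads $b+Jc=0$, i.e. $Jc=-b$.

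The key step is to deduce from $Jc=-b$ (with $b,c\in L_I$ and $J\notin\{\pm I\}$) that $b=c=0$. I would argue as follows: multiply $Jc=-b$ on the left by $J$ to get $-c=J(Jc)=-Jb$, so $c=Jb$; comparing with $b=-Jc$ gives $b=-J(Jb)=b$, which is consistent, so instead I look at the ``slice structure''. Write $b=b_0+b_1 I$ and $c=c_0+c_1 I$ with $b_0,b_1,c_0,c_1\in\rr$. Then $Jc = c_0 J + c_1 JI$; since $J\notin\{\pm I\}$, the elements $1, I, J, JI$ are $\rr$-linearly independent in $\HH$, so the equation $c_0 J + c_1 JI = -b_0 - b_1 I$ forces $b_0=b_1=c_0=c_1=0$, i.e. $b=c=0$. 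Therefore $f$ vanishes identically on the sphere $x+y\s$, that is $f(x+yK)=0$ for all $K\in\s$.

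The main (mild) obstacle is justifying the linear independence of $\{1,I,J,JI\}$ when $J\in\s\setminus\{\pm I\}$: one must check that $J$ is not a real linear combination of $1$ and $I$ (true because $J$ is a purely imaginary unit distinct from $\pm I$, and $\rr+\rr I=L_I$ meets $\s$ only in $\pm I$), and then that $JI$ is independent of $1, I, J$; this is a standard fact about the quaternions, following for instance by completing $I$ to an orthonormal basis and expressing $J$ in it, or by a direct computation of inner products using $\RRe(\bar a b)$. Once this linear-algebra point is in place the argument is immediate. Alternatively, and perhaps more cleanly, one can invoke Proposition \ref{maxminslice}: if $f(x+yJ)=0$ for some $J$, then $\min_{K\in\s}|f(x+yK)|=0$, and by that proposition this minimum equals $\min\{|f(x+yI)|,|f(x-yI)|\}$; combined with the affine structure (a nonzero affine function $b+Jc$ on $\s$ vanishes at most at one point, unless $c=0$ in which case it is constant) one again concludes $b=c=0$ and hence $f\equiv 0$ on the whole sphere. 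I would present the direct linear-independence argument as the primary proof and mention the Proposition \ref{maxminslice} route as a remark.
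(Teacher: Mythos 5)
Your proof is correct, and it follows exactly the route the paper indicates: the paper states Proposition \ref{zerislice} without proof (deferring to \cite{libroGSS}) but explicitly frames it as a ``consequence of the Representation Formula \ref{RF}'', which is precisely your argument via the affine form $b+Jc$ with $b,c\in L_I$ and the $\rr$-linear independence of $\{1,I,J,JI\}$ for $J\in\s\setminus\{\pm I\}$. Both your main argument and the alternative via Proposition \ref{maxminslice} are sound.
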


\begin{teo}[Zero set structure]\label{zeri}
Let $f$ be a regular function on  $\B$. If $f$ does not vanish identically, 
then its zero set consists of the union of isolated points and isolated $2$-spheres of the form $x +y \mathbb{S}$ with 
$x,y \in \mathbb{R}$, $y \neq 0$.
\end{teo}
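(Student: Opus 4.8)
The plan is to reduce the structure of the zero set of a regular function $f$ on $\B$ to the classical theory of zeros of holomorphic functions of one complex variable, using the Splitting Lemma \ref{split}, the Representation Formula \ref{RF}, and Proposition \ref{zerislice}. First I would fix a slice $I\in\s$ and an orthogonal unit $J\in\s$ and invoke the Splitting Lemma to write $f_I = F + GJ$ on $\B_I$ with $F,G:\B_I\to L_I$ holomorphic. A point $z=x+yI\in\B_I$ is a zero of $f_I$ precisely when $F(z)=G(z)=0$; since $F$ and $G$ are holomorphic and $f$ does not vanish identically, not both $F$ and $G$ are identically zero on $\B_I$ — here one must be slightly careful, since it is conceivable that $F\equiv 0$ but $G\not\equiv 0$ (or vice versa), in which case the zeros of $f_I$ are exactly the zeros of $G$ (resp.\ $F$). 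In all cases, the zero set $Z_I := \{z\in\B_I : f_I(z)=0\}$ is the intersection of the zero sets of two holomorphic functions, at least one of which is not identically zero; hence $Z_I$ is a discrete (closed, isolated) subset of $\B_I$.

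Next I would pass from the zeros on the single slice $\B_I$ to the zeros in all of $\B$ via the Representation Formula \ref{RF}. Given any $q=x+yK\in\B$ with $K\in\s$, the formula expresses $f(x+yK)$ in terms of $f(x+yI)$ and $f(x-yI)$. Examining the two cases: if $y=0$, then $q=x\in\rr$ lies on every slice, and $f(x)=0$ forces $x\in Z_I$, an isolated real zero. If $y\neq 0$, consider the sphere $x+y\s\subset\B$. If both $x+yI$ and $x-yI$ lie in $Z_I$, then by the Representation Formula $f(x+yK)=\tfrac12(f(x+yI)+f(x-yI))+\tfrac{KI}{2}(f(x-yI)-f(x+yI))=0$ for every $K\in\s$, so the entire sphere $x+y\s$ is contained in the zero set. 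If exactly one of $x\pm yI$ lies in $Z_I$, say only $x+yI$, then $f(x+yK)=0$ forces (from the representation formula, treating it as an affine function $b+Kc$ of $K$ with $b=\tfrac12(f(x+yI)+f(x-yI))$ and $c=\tfrac{I}{2}(f(x-yI)-f(x+yI))$, here $b=\tfrac12 f(x-yI)=-c I^{-1}\cdot$ something nonzero) that $K$ is uniquely determined; this produces an isolated point zero off the slice $L_I$. Finally, if neither $x+yI$ nor $x-yI$ lies in $Z_I$, I claim the sphere $x+y\s$ contains no zero of $f$ at all: indeed, a zero $x+yK$ with $K\neq\pm I$ would, by applying the argument slicewise after possibly changing the reference slice, or more directly by Proposition \ref{zerislice} applied to a suitable companion function, force zeros back onto $L_I$ — cleaner is to observe that if $f(x+yK)=0$ then the affine map $J\mapsto f(x+yJ)$ vanishes at $J=K$, hence its modulus, a nonnegative affine-type function on $\s$, is not identically zero unless it vanishes on all of $x+y\s$, and in either subcase one is reduced to the situations already treated.

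It remains to upgrade "discrete zero set, plus possibly whole spheres" to the precise assertion that the zeros are a union of \emph{isolated} points and \emph{isolated} spheres $x+y\s$. Isolatedness of the point zeros on $L_I$ is exactly the discreteness of $Z_I$ in $\B_I$ established in the first step. For a zero sphere $x_0+y_0\s$, the two points $x_0\pm y_0 I$ both lie in $Z_I$; since $Z_I$ is discrete, a neighborhood of $x_0+y_0 I$ in $\B_I$ meets $Z_I$ only at $x_0+y_0 I$, and similarly for $x_0-y_0 I$, which by the case analysis of the previous paragraph means no nearby sphere $x+y\s$ (with $(x,y)$ close to $(x_0,y_0)$) is entirely contained in the zero set, and any nearby isolated point zeros are bounded away. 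One should also rule out accumulation of zeros at a real point: this again follows since the real axis sits inside $\B_I\cong\B_{\cc}$ and $Z_I$ is discrete there, and a sequence of spheres $x_n+y_n\s$ with $(x_n,y_n)\to(x_\infty,0)$ would produce a sequence in $Z_I$ accumulating at $x_\infty$, a contradiction. The main obstacle I anticipate is the bookkeeping in the case analysis on the sphere $x+y\s$ — in particular cleanly handling the degenerate possibilities (one of $F,G$ identically zero; exactly one of $x\pm yI$ a zero; the behavior of the affine function $J\mapsto f(x+yJ)$ when its "vector part" $c$ vanishes) — but none of these cases is hard in itself, and each reduces to the discreteness of $Z_I$ and the Representation Formula.
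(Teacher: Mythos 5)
First, note that the paper does not prove this theorem: it is stated in the Preliminaries with a pointer to the literature (\cite{libroGSS}), so the comparison below is with the standard proof cited there.

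Your argument has a genuine gap in the case analysis on a sphere $x+y\s$. The claim that ``if neither $x+yI$ nor $x-yI$ lies in $Z_I$ then the sphere $x+y\s$ contains no zero of $f$'' is false. Take $f(q)=q-a$ with $a=x_0+y_0K$ and $K\neq\pm I$: then $f_I$ is zero-free on $\B_I$ (so $Z_I=\emptyset$), yet $f$ vanishes at $a$, which lies on the sphere $x_0+y_0\s$. In terms of the affine restriction $J\mapsto b+Jc$, this map can vanish at exactly one point of $\s$ without vanishing at $\pm I$; your attempted repair (``one is reduced to the situations already treated'') is circular, and Proposition \ref{zerislice} does not apply since a general $f$ does not preserve the slice $L_I$. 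This is not a peripheral issue: the whole difficulty of the theorem is to show that the zeros lying \emph{off} the reference slice cannot accumulate, and the discreteness of $Z_I$ --- even of every $Z_{I'}$ for each $I'$ separately --- gives no control on this, because a putative accumulating sequence of zeros $x_n+y_nI_n$ may have all the $I_n$ distinct, so no single slice sees more than one of them. (A smaller slip: in your ``exactly one of $x\pm yI$ is a zero'' case, the formula $f(x+yK)=\frac12(1+KI)f(x-yI)$ shows the unique zero on the sphere is $x+yI$ itself, i.e.\ it lies \emph{on} $L_I$, not off it.)

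The missing idea in the standard proof is the symmetrization $f^s=f*f^c$. By Proposition \ref{trasf}, every zero of $f$ is a zero of $f^s$; the function $f^s$ is slice preserving and, by the Identity Principle, $f^s\not\equiv 0$ when $f\not\equiv 0$, so $f^s_I$ is a nonzero holomorphic function on $\B_I$ whose zero set is discrete and stable under $z\mapsto\bar z$. Hence the zero set of $f^s$ (and a fortiori the set of spheres that can carry zeros of $f$) is a \emph{discrete} union of real points and spheres $x+y\s$. On each such sphere the Representation Formula \ref{RF} makes $f$ affine, $J\mapsto b+Jc$, which vanishes either identically or at exactly one point of $\s$. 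This yields the claimed structure; your slicewise splitting and affine analysis are correct ingredients, but without the passage to $f^s$ (or an equivalent device) the isolatedness of the off-slice zeros is not established.
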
 

\noindent Spheres of zeros of real dimension $2$ are a peculiarity  of regular functions.

\begin{defn}\label{generatori}
Let $f$ be a regular function on  $\B$. A $2$-dimensional sphere $x+y\s \subset \B$ of zeros of f is called a 
{\em spherical zero} of $f$. 
Any point $x+yI$ of such a sphere is called a {\em generator} of the spherical zero $x+y\s$.
Any zero of $f$ that is not a generator of a spherical zero is called an
{\em isolated zero} (or a {\em non spherical zero} or simply a {\em zero}) of $f$.
\end{defn}
%
\noindent An appropriate notion of multiplicity, introduced in \cite{milan} for the case of regular quaternionic polynomials, can be given for zeros of 
regular functions.

\begin{defn}\label{multiplicity}
Let $f$ be a regular function on  $\B$ and let $x+y\s\subset \B$ with $y\neq0$. 
Let $m,n\in\N$ and $p_1,\dots,p_n\in x+y\s$ (with $p_i\neq \overline{p}_{i+1}$ for all $i\in\{1,\dots,n-1\}$) be such that 
\[f(q)=\left((q-x)^2+y^2\right)^m(q-p_1)*(q-p_2)*\dots*(q-p_n)*g(q)\]
for some regular function $g:\B\to \HH$ which does not have zeros in $x+y\s$. Then
$2m$ is called the {\em spherical multiplicity} of $x+y\s$ and $n$ is called the {\em isolated multiplicity} of $p_1$. 
On the other hand, if $x\in\rr$, then we call {\em isolated
multiplicity} of $f$ at $x$ the number $k\in\N$ such that
 \[f(q)=(q-x)^kh(q)\]
for some regular function $h:\B \to \HH$ which does not vanish at $x$.
\end{defn}

%

\noindent The regular conjugate, and the symmetrization, of a regular function $f$ on  $\B$ allow the definition of its  \emph{regular reciprocal} $f^{-*}$ defined on $\B\setminus Z_{f^s}$ as
\begin{equation*}
f^{-*}=(f^{s})^{-1}f^c,
\end{equation*}
where $Z_{f^s}$ denotes the zero set of the symmetrization $f^s$. This naturally leads to the definition of \emph{regular quotient} of regular functions, for more details see \cite{libroGSS}.

\begin{pro}\label{Caterina} 
Let $f$ and $g$ be regular functions on $\B$. 
If \  $T_f : \B \setminus Z_{f^s} \rightarrow  \B \setminus Z_{f^s}$ is defined as
\[T_{f}(q)=f^c(q)^{-1}qf^c(q),\] then 
\[f^{-*}*g(q)=f(T_{f}(q))^{-1}g(T_{f}(q)) \quad \text{for every} \quad q \in \B \setminus Z_{f^s}.\] 
Furthermore, $T_f$ and $T_{f^c}$ are mutual inverses so that $T_f$ is a diffeomorphism.
\end{pro}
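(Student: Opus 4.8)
The plan is to unwind the definition $f^{-*}=(f^s)^{-1}f^c$ and reduce everything to two applications of the transformation formula in Proposition \ref{trasf}. The starting point is the observation, immediate from Definition \ref{f^c} and the formula for the $*$-product (the components $F,G$ of the splitting take values in the commutative plane $L_I$, so the $J$-component of $f*f^c$ cancels), that the symmetrization $f^s$ is slice preserving; hence so is its pointwise reciprocal $(f^s)^{-1}$, which is regular on the slice symmetric domain $\B\setminus Z_{f^s}$. By Lemma \ref{realproduct} and associativity of the $*$-product we may therefore write, on $\B\setminus Z_{f^s}$,
$$f^{-*}*g=(f^s)^{-1}*(f^c*g)=(f^s)^{-1}\,(f^c*g),$$
the last product being pointwise.

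Next I would check that $f^c$ does not vanish on $\B\setminus Z_{f^s}$: if $f^c(q)=0$, then by Proposition \ref{trasf} applied to $f^s=f^c*f$ we get $f^s(q)=0$, i.e.\ $q\in Z_{f^s}$. So for $q\in\B\setminus Z_{f^s}$ the point $T_f(q)=f^c(q)^{-1}qf^c(q)$ is well defined, and two applications of Proposition \ref{trasf} give
$$(f^c*g)(q)=f^c(q)\,g(T_f(q))\qquad\text{and}\qquad f^s(q)=(f^c*f)(q)=f^c(q)\,f(T_f(q)).$$
From the second identity and $f^s(q)\neq 0$ we obtain $f(T_f(q))\neq 0$ and $(f^s(q))^{-1}f^c(q)=f(T_f(q))^{-1}$; substituting into the expression for $f^{-*}*g$ from the first paragraph yields $f^{-*}*g(q)=f(T_f(q))^{-1}g(T_f(q))$. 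One should also note that $T_f$ sends $\B\setminus Z_{f^s}$ to itself: conjugation preserves the real part and the modulus, so $T_f(q)$ lies on the same $2$-sphere $x+y\s$ as $q$, and since $f^s$ is slice preserving the Representation Formula \ref{RF} (together with Proposition \ref{maxminslice}) forces $Z_{f^s}$ to be a union of entire such spheres and of real points, so $x+y\s\subset\B\setminus Z_{f^s}$.

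For the last assertion, note that $(f^c)^c=f$ and $(f^c)^s=f^s$, so $T_{f^c}(q)=f(q)^{-1}qf(q)$ is also defined on $\B\setminus Z_{f^s}$ (where $f$ does not vanish, by Proposition \ref{trasf} applied to $f^s=f*f^c$). Using $f(T_f(q))=f^c(q)^{-1}f^s(q)$ from the previous step,
$$T_{f^c}(T_f(q))=f(T_f(q))^{-1}\,T_f(q)\,f(T_f(q))=f^s(q)^{-1}f^c(q)\cdot f^c(q)^{-1}qf^c(q)\cdot f^c(q)^{-1}f^s(q)=f^s(q)^{-1}\,q\,f^s(q).$$
Since $f^s$ is slice preserving, $q$ and $f^s(q)$ lie in a common slice $L_I$, hence commute, and the right-hand side equals $q$; the identity $T_f(T_{f^c}(q))=q$ follows by exchanging the roles of $f$ and $f^c$. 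Finally $T_f$ and $T_{f^c}$ are smooth, being built from the real-analytic functions $f,f^c$ and from quaternionic inversion away from the zero set, so $T_f$ is a diffeomorphism of $\B\setminus Z_{f^s}$. I do not expect a serious obstacle: the only delicate points are the domain bookkeeping (that $f$, $f^c$, $f^s$ are all nonzero on $\B\setminus Z_{f^s}$ and that $T_f$ preserves this set) and the elementary remark that $f^s$ is slice preserving, which is exactly what makes the conjugation $f^s(q)^{-1}qf^s(q)$ collapse to $q$.
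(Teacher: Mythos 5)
Your proof is correct. Note that the paper does not actually prove Proposition \ref{Caterina}: it is stated in the Preliminaries with the proofs deferred to the literature (in particular to \cite{libroGSS}), and your argument is exactly the standard one --- unwind $f^{-*}=(f^s)^{-1}f^c$ using that $f^s$ is slice preserving (Lemma \ref{realproduct} plus associativity), apply Proposition \ref{trasf} twice to $f^c*g$ and to $f^s=f^c*f$, and exploit $(f^c)^c=f$, $(f^c)^s=f^s$ together with the fact that $q$ and $f^s(q)$ commute to get $T_{f^c}\circ T_f=\mathrm{id}$; the domain bookkeeping (non-vanishing of $f$, $f^c$, $f^s$ on $\B\setminus Z_{f^s}$ and the sphericity of $Z_{f^s}$) is handled correctly.
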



\noindent Regular Moebius transformations are interesting examples of regular quotients (see \cite{regular}). 

\begin{defn}\label{moebius}
A {\em regular Moebius transformation} is a regular function $M:\B\to\B$ of the form
\[M(q)=(1-q\overline{a})^{-*}*(q-a)u\]
where $a\in\B$ and $u\in\p \B$.
\end{defn}

\noindent In particular, these transformations are regular bijections of the open unit ball onto itself and, by direct computation, 
it is possible to prove that they map the boundary of the unit ball to itself.

\begin{pro}\label{moeb2}
A function $M$ is a regular bijection from $\B$ to itself if and only if $M$ is a regular Moebius transformation.
\end{pro}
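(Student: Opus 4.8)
The reverse implication is essentially the remark preceding the statement (a regular Moebius transformation is a regular self-bijection of $\B$), so the real task is to show that a regular bijection $M\colon\B\to\B$ is a regular Moebius transformation. My plan is to imitate the classical argument — move the zero of $M$ to the origin and apply a Schwarz-type lemma — but, since compositions of regular functions need not be regular, to move the zero by a \emph{regular quotient} rather than by composition.

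First I would locate the zero: by surjectivity there is $a\in\B$ with $M(a)=0$; by injectivity $a$ is the unique zero, so $M$ has no spherical zero. Writing $a=x_0+y_0I_0$, injectivity also forces $a$ to be a simple zero (a higher-order zero would make $M$ locally non-injective on $\B_{I_0}$). Next I introduce the regular Moebius transformation $\mathcal{M}_a(q)=(1-q\overline{a})^{-*}*(q-a)$; by the reverse implication it is a regular bijection of $\B$, and since all the operations defining it preserve $L_{I_0}$ it is $L_{I_0}$-preserving, coinciding on $\B_{I_0}$ with the classical disc automorphism $z\mapsto(z-a)(1-\overline{a}z)^{-1}$, whose only zero is $a$. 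I then form the regular quotient $h:=\mathcal{M}_a^{-*}*M$, regular on $\B$ off the sphere $x_0+y_0\s$, and show it extends regularly to all of $\B$: on $\B_{I_0}$ (where the map $T_{\mathcal{M}_a}$ of Proposition \ref{Caterina} is the identity, since $\mathcal{M}_a^c$ is also $L_{I_0}$-preserving) $h$ restricts to the single complex-variable holomorphic function $z\mapsto\mathcal{M}_a(z)^{-1}M(z)$, whose only possible pole — the simple zero of $\mathcal{M}_a$ at $a$ — is removable because $M$ vanishes (simply) there; by the Extension Lemma and the identity principle this yields a regular $\widetilde h$ on $\B$ with $M=\mathcal{M}_a*\widetilde h$, and by Proposition \ref{trasf} (using that $M$ and $\mathcal{M}_a$ have the same simple zero) $\widetilde h$ is zero-free.

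The heart of the proof is to show that $\widetilde h$ is a unit constant. Since a nonconstant regular function is open, $M$ is a homeomorphism of $\B$, hence proper ($|M(q)|\to1$ as $|q|\to1$), and likewise $\mathcal{M}_a$; using Proposition \ref{Caterina} and $|T_{\mathcal{M}_a}(q)|=|q|$ one gets $|\widetilde h(q)|=|M(T_{\mathcal{M}_a}(q))|/|\mathcal{M}_a(T_{\mathcal{M}_a}(q))|\to1$, so $\widetilde h$ is proper. On each slice $\B_I$ the Splitting Lemma writes $\widetilde h_I=P+QJ$, with $|P|^2+|Q|^2$ subharmonic and of boundary value $1$, hence $|\widetilde h|\le1$ on $\B$. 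As $\widetilde h$ (so $\widetilde h^s$) is zero-free, $\widetilde h^{-*}$ is regular on all of $\B$ and is the two-sided $*$-inverse of $\widetilde h$; from $|\widetilde h^{-*}(q)|=1/|\widetilde h(T_{\widetilde h}(q))|$ and $|\widetilde h|\le1$ it is $\ge1$, and it is proper as well (as $T_{\widetilde h}$ preserves moduli). Thus on each slice $|\widetilde h^{-*}_I|^2=|P'|^2+|Q'|^2$ is subharmonic, at least $1$, with boundary value $1$, so it is $\equiv1$; a sum of two nonnegative subharmonic functions equal to a constant forces each to be harmonic, so $P',Q'$ are constant. Hence $\widetilde h^{-*}$ is constant on a slice, so $\widetilde h^{-*}\equiv v$ on $\B$ with $|v|=1$, and $\widetilde h\equiv v^{-1}$; therefore $M=\mathcal{M}_a*v^{-1}=(1-q\overline{a})^{-*}*(q-a)\,v^{-1}$ with $v^{-1}\in\partial\B$, a regular Moebius transformation.

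The main obstacle is this last step. One cannot simply ``normalise $M(0)=0$'' and quote a Schwarz lemma — the normalising composition is not regular — and a one-slice estimate is by itself too weak, since there exist non-constant zero-free regular functions bounded by $1$ on a single slice with unimodular boundary values. What has to be used is that the \emph{global} bijectivity (equivalently, properness) of $M$ forces the factor $\widetilde h$ to be proper on every slice at once, which together with zero-freeness pins it down; an alternative, once the factorization $M=\mathcal{M}_a*\widetilde h$ is established, is to finish via the quaternionic Schwarz lemma.
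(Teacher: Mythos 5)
You should first note that the paper does not actually prove this proposition: it is recalled in the Preliminaries from Stoppato's work \cite{regular}, so there is no internal proof to measure yours against. Taken on its own terms, most of your forward argument is sound and well adapted to the non-commutative setting: the factorization $M=\mathcal{M}_a*\widetilde h$ through the $L_{I_0}$-preserving Moebius factor (with $T_{\mathcal{M}_a}=\mathrm{id}$ on $\B_{I_0}$), the use of properness of the homeomorphism $M$ to force $|\widetilde h|\to 1$ at $\p\B$, the subharmonicity of $|P|^2+|Q|^2$ to get $|\widetilde h|\le 1$, and the trick of passing to $\widetilde h^{-*}$ so as to trap it between the lower bound $1$ and the maximum principle are all correct, and they genuinely avoid the illegal ``compose with an automorphism'' step.

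The genuine gap is the claim that injectivity forces $a$ to be a \emph{simple} zero, which you justify by asserting that a higher-order zero would make $M$ locally non-injective on $\B_{I_0}$. That justification is false. A zero of isolated multiplicity $\ge 2$ in the sense of Definition \ref{multiplicity} does not force the restriction $M_{I_0}=F+GJ$ to be locally non-injective, nor even to have a critical point: the polynomial $(q-I)*(q-J)=q^2-q(I+J)+IJ$ has $I$ as its only zero, with isolated multiplicity $2$, yet its restriction to $\B_I$ is $z\mapsto z(z-I)+(I-z)J$, whose second splitting component is affine, so that the restriction is injective on all of $\B_I$. (Even if both components vanished to high order, pairs such as $(z^2,z^3)$ are injective, so slice-wise injectivity cannot detect multiplicity.) This is not a cosmetic issue: simplicity of the zero is exactly what your endgame needs, because if $a$ had isolated multiplicity $n\ge 2$ then $\widetilde h=\mathcal{M}_a^{-*}*M=(1-q\overline a)*(q-p_2)*\cdots*(q-p_n)*g$ would vanish at $p_2$, the $*$-inverse $\widetilde h^{-*}$ would not be regular on all of $\B$, and the whole subharmonicity argument collapses. (Relatedly, even granting simplicity, your one-line deduction that $\widetilde h$ is zero-free is too quick, since a right $*$-factor can vanish at a point without the product vanishing anywhere new; the clean route is to compare orders of vanishing of the slice-preserving functions $M^s=\mathcal{M}_a^s\,\widetilde h^s$ along the sphere $x_0+y_0\s$, which again uses the multiplicity.) The statement that a regular bijection has a zero of multiplicity one is true, but it requires an actual argument --- for instance via the local topological degree of the homeomorphism $M$ at $a$, or via the multiplicity analysis carried out in \cite{regular} --- and until you supply or cite one, the proof is incomplete at its load-bearing step.
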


\section{The quaternionic spaces $H^p(\B)$}\label{spaces}

In this section we give a definition of quaternionic  Hardy spaces. The first natural step is to define an appropriate Hardy-type norm.
Let $p\in (0,+\infty)$, $r\in[0,1)$, and $I\in \s$. For any regular function $f$ in the unit ball $\B$, let $(f_I)_r$ be the function defined on the unit circle $\p \B_I$ by
\[(f_I)_r(e^{I\theta})=f(re^{I\theta})\]
and let $M_p(f_I,r)$ be the integral mean
\begin{equation}\label{mediaint}
M_p(f_I,r)=\left(\frac{1}{2\pi} \int_{-\pi}^{\pi}|(f_I)_r(e^{I\theta})|^p d\theta\right)^{\frac{1}{p}}=\left(\frac{1}{2\pi} \int_{-\pi}^{\pi}|f(re^{I\theta})|^p d\theta\right)^{\frac{1}{p}}.
\end{equation}
If $p=+\infty$, set $M_{\infty}(f,r)$ to be defined as
\[M_{\infty}(f,r)=\sup_{|q|<r}|f(q)|\]
for $0<r<1$ (and $|f(0)|$ for $r=0$).
\begin{pro}\label{incr}
Let $f:\B\to\HH$ be a regular function. Then for any $p\in (0,+\infty)$ and for any imaginary unit $I\in\s$, the function $r \mapsto M_p(f_I,r)$ 
is increasing. For $p=+\infty$, the function
$r \mapsto M_{\infty}(f,r)$ is increasing as well.
\end{pro}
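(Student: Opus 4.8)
The plan is to deduce the statement from the classical fact that the circular means of a subharmonic function of one complex variable form a nondecreasing function of the radius. The case $p=+\infty$ requires nothing: since $\{|q|<r_1\}\subseteq\{|q|<r_2\}$ whenever $r_1\le r_2$, the supremum $M_\infty(f,r)=\sup_{|q|<r}|f(q)|$ is automatically nondecreasing. So fix $0<p<+\infty$ and $I\in\s$, choose $J\in\s$ with $J\perp I$, and use the Splitting Lemma (Lemma \ref{split}) to write $f_I=F+GJ$ with $F,G:\B_I\to L_I$ holomorphic. Since $\{1,I,J,IJ\}$ is an orthonormal basis of $\HH$ over $\rr$, one has $|f_I(z)|^2=|F(z)|^2+|G(z)|^2$ for every $z\in\B_I$; hence, by \eqref{mediaint},
\[
M_p(f_I,r)^p=\frac{1}{2\pi}\int_{-\pi}^{\pi}\bigl(|F(re^{I\theta})|^2+|G(re^{I\theta})|^2\bigr)^{p/2}\,d\theta,
\]
and it is enough to show that $u:=(|F|^2+|G|^2)^{p/2}$ is subharmonic on $\B_I$, viewed as a disc in $\cc$ via the identification $L_I\cong\cc$.

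To get subharmonicity of $u$ I would first prove that $v:=\log(|F|^2+|G|^2)$ is subharmonic. Away from the (discrete, assuming $f\not\equiv 0$) common zero set of $F$ and $G$ this is a direct computation: writing $w=|F|^2+|G|^2$ and using $\Delta|F|^2=4|F'|^2$ together with $\partial_z w=F'\overline F+G'\overline G$, one finds
\[
\Delta v=\frac{4}{w^2}\Bigl[(|F'|^2+|G'|^2)(|F|^2+|G|^2)-\bigl|F'\overline F+G'\overline G\bigr|^2\Bigr]\ge 0,
\]
the inequality being Cauchy--Schwarz for the vectors $(F',G')$, $(F,G)\in\cc^2$. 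Near a common zero of $F$ and $G$ the function $v$ is bounded above while $w=e^{v}$ is continuous, so $v$ extends across such points to a subharmonic function on all of $\B_I$ by the standard removable--singularity argument for subharmonic functions. Then $u=\exp\!\bigl(\tfrac p2 v\bigr)$ is subharmonic as the composition of the increasing convex function $t\mapsto e^{pt/2}$ with the subharmonic function $\tfrac p2 v$. Applying the classical monotonicity of circular means of subharmonic functions to $u$ yields that $r\mapsto M_p(f_I,r)^p$, hence $r\mapsto M_p(f_I,r)$, is nondecreasing.

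The bulk of the work, and the only genuinely delicate point, is the subharmonicity of $u$: the Cauchy--Schwarz step above and, technically, the removable--singularity step at the common zeros of $F$ and $G$ (equivalently, at the zeros of $f$ on the slice $\B_I$). Everything else --- the identity $|f_I|^2=|F|^2+|G|^2$ and the monotone--means lemma --- is routine or classical. One could also phrase the argument without the auxiliary function $v$ by checking directly that $\Delta u\ge 0$ off the zero set, but factoring through $\log w$ makes the role of Cauchy--Schwarz transparent and reduces the convexity bookkeeping to a single composition.
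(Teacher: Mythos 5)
Your proposal is correct and follows essentially the same route as the paper: split $f_I=F+GJ$, show $\log(|F|^2+|G|^2)$ is subharmonic via the Cauchy--Schwarz inequality off the zero set and trivially (sub-mean value with value $-\infty$) at the zeros, exponentiate through an increasing convex function, and invoke the classical monotonicity of circular means of subharmonic functions. The only cosmetic difference is in the $p=+\infty$ case, where you observe that the supremum over nested balls is automatically nondecreasing while the paper cites the Maximum Modulus Principle; both are fine.
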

\begin{proof}
The last part of the statement follows directly from the Maximum Modulus Principle for regular functions, \cite{libroGSS}.
To prove the first part, fix $p\in(0,+\infty)$ and $I\in \s$. We will prove that the function 
\[g:\B_I \to \rr, \quad g:z\mapsto |f_I(z)|^p,\]
is a subharmonic function, and then the statement will follow from classical results, see for instance Theorem 17.5 in \cite{Rudin}. 
In order to obtain the subharmonicity of $g(z)$, we will first show that 
\begin{equation}\label{hsub}
h:\B_I \to \rr, \quad h:z\mapsto \log(|f_I(z)|)
\end{equation}
is a subharmonic function. Then, since  $g(z)=e^{ph(z)}$ is the composition of an increasing convex function with a subharmonic function, we will conclude that also $g$ is subharmonic (see e.g., Theorem 17.2 in \cite{Rudin}).\\ 
Suppose $f \not\equiv 0$ (otherwise the statement is trivially true). 
Then $h(z)$ is an upper semicontinuous function. Moreover in the set $\{z\in \B_I \, | \, f_I(z) \neq 0\}$ the function $h$ is subharmonic. 
In fact we have that
\begin{equation*}
\begin{aligned}
\Delta h(z)= 4 \frac{\partial}{\partial \overline{z}} \frac{\partial}{\partial z}\log(|f_I(z)|)= 
2 \frac{\partial}{\partial \overline{z}} \frac{\partial}{\partial z}\log(|F(z)|^2+|G(z)|^2)
\end{aligned}
\end{equation*}
where $F,G$ are the splitting (holomorphic) functions of $f_I$ with respect to $J\in \s$, $J$ orthogonal to $I$. 
 Hence (omitting the variable $z$)
\begin{equation}\label{sub}
\begin{aligned} 
\Delta h&
=2\frac{\left(|F'|^2+|G'|^2\right)\left(|F|^2+|G|^2\right)-\left(F'\overline{F}+G'\overline{G}\right)\left(F\overline{F'}+G\overline{G'}\right)}{\left(|F|^2+|G|^2\right)^2}.
\end{aligned}
\end{equation}
Schwarz inequality 
yields that $h(z)$ is subharmonic  where $f_I$ is non vanishing. 
It remains to show that $h$ is still subharmonic in a neighborhood of each zero of $f_I$. Recall that the zero set of a regular function intersected with a slice $L_I$, is a discrete subset of $L_I$ (see Theorem \ref{zeri}). Then for any zero $z_0$ of $f_I$, there exists a neighborhood $U_I \subset \B_I$ where $z_0$ is the only point where $f_I$ vanishes. Hence we have that for all $r$ such that $z_0+B_I(0,r) \subset U_I$ the submean property is trivially satisfied
\[-\infty=h(z_0) \leq \frac{1}{2\pi}\int_{\partial B_I(0, r)}h(z_0+re^{I\theta})d\theta.\]
Since this condition implies the subharmonicity of $h$ near $z_0$, we can conclude the proof. 
\end{proof}

\begin{oss}
The previous result is the analogue of the first statement of the Hardy convexity Theorem in the complex setting, see Theorem 1.5 in \cite{duren}.
\end{oss}

\begin{oss}
We point out that, despite what happens in the complex case, where the function $z\mapsto \log(|g(z)|)$ is 
actually harmonic for any non vanishing holomorphic function $g$, the function $h$ defined in equation  \eqref{hsub} is only subharmonic in general.
 In fact $\Delta h=0$ if and only if 
 the vector 
$(F(z),G(z))\in (L_I)^2$, identified by the restriction of the regular function $f_I$, is parallel to the vector $(F'(z),G'(z))$ 
identified by the derivative of $f_I$. This happens for example if 
\begin{equation*}
\left\{
\begin{array}{l}
F'(z)=kF(z)\\
G'(z)=kG(z)
\end{array}
\right.
\end{equation*}
for some $k\in L_I$.
In this case $F$ and $G$ are exponential (or constant) functions, 
\begin{equation*}
\left\{
\begin{array}{l}
F(z)=F(0)e^{kz}\\
G(z)=G(0)e^{kz}.
\end{array}
\right.
\end{equation*}

\end{oss}

Thanks to Proposition \ref{incr}, we can give the following Definition.
\begin{defn}
Let $f:\B\to\HH$ be a regular function. If $p\in(0,+\infty)$, for any $I\in \s$, we set 
\[||f_I||_p=\lim_{r\to 1^-}M_p(f_I,r)=\lim_{r\to 1^-}\left(\frac{1}{2\pi} \int_0^{2\pi}|f(re^{I\theta})|^p d\theta\right)^{\frac{1}{p}}
\]
and
\begin{equation*}
||f||_p=\sup_{I\in \s}||f_I||_p=\sup_{I\in \s}\lim_{r\to 1^-}\left(\frac{1}{2\pi} \int_0^{2\pi}|f(re^{I\theta})|^p d\theta\right)^{\frac{1}{p}}.
\end{equation*}
If $p=+\infty$, we set 
\[||f||_{\infty}=\lim_{r\to 1^-}M_{\infty}(f,r)=\lim_{r\to 1^-}\sup_{|q|<r}|f(q)|=\sup_{q\in \B}|f(q)|.\]
\end{defn}
\begin{oss}
Notice that $||f||_{\infty}$ is the uniform norm of $f$ on $\B$.
Moreover, if we set for any $I\in \s$ 
\[||f_I||_{\infty}= \sup_{z\in \B_I}|f(z)|,\]
then we have
\[||f||_{\infty}=\sup_{I\in\s}||f_I||_{\infty}.\]
The set where the uniform norm is taken will be $\B_I$ when considering the restriction $f_I$ (or its splitting components), 
and $\B$ when considering the function $f$.
\end{oss}

\begin{defn}\label{Hp}
Let $p\in (0,+\infty]$. We define the quaternionic Hardy space $H^p(\B)$ as
\[ H^p(\B)= \left\{ f\colon \B\to \HH \, \ | \, \ f \, \text{is regular and }\, \ ||f||_p< +\infty \right\}.\] 
\end{defn}

\begin{oss}\label{inclusion}
For any $p\in(0,+\infty]$, the space $H^p(\B)$ is a real vector space. Furthermore  if (and only if) $p\ge 1$ the function $||\cdot ||_p$ satisfies the triangle inequality, and hence it is a norm on $H^p(\B)$. 
Moreover, the same relations of inclusions that hold for complex $H^p$ spaces, hold in the quaternionic setting. 
In fact, for any $p,q$ such that $0<p<q\le +\infty$, thanks to the classical Jensen inequality we have that
\[H^q(\B)\subset H^p(\B).\]
The inclusion is continuous for $1\le p<q\le +\infty$.
\end{oss}
In analogy with the complex case, the space $H^2(\B)$ is special. 
Indeed the $2$-norm turns out to be induced by an inner product (see \cite{milanesi}). 
\begin{pro}\label{norma2}
Let $f\in H^2(\B)$ and let $f(q)=\sum_{n\ge 0}q^na_n$ be its power series expansion. Then the $2$-norm of $f$,
\[||f||_2=\sup_{I\in \s}\lim_{r\to1^-}\left(\frac{1}{2\pi}\int_{-\pi}^{\pi}|f(re^{I\theta})|^2d\theta \right)^{\frac{1}{2}},\]
coincides with 
\[\Big(\sum_{n\ge 0}|a_n|^2\Big)^{\frac{1}{2}}.\]
\end{pro}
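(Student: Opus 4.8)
The plan is to reduce the statement to the classical Parseval identity for holomorphic functions of one complex variable, by means of the Splitting Lemma \ref{split}. Fix $I\in\s$ and pick $J\in\s$ with $J\perp I$, and write $f_I=F+GJ$ with $F,G\colon\B_I\to L_I$ holomorphic. Decomposing each coefficient as $a_n=b_n+c_nJ$ with $b_n,c_n\in L_I$ and using that $L_I$ is commutative, the restriction $f_I(z)=\sum_{n\ge0}z^na_n$ splits as $F(z)=\sum_{n\ge0}b_nz^n$ and $G(z)=\sum_{n\ge0}c_nz^n$. Since $1,I,J,IJ$ form an orthonormal basis of $\HH$, we get $|a_n|^2=|b_n|^2+|c_n|^2$, and likewise $|f_I(z)|^2=|F(z)|^2+|G(z)|^2$ for every $z\in\B_I$.

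Next, identifying $L_I$ with $\cc$ and using that the power series of $F$ and $G$ converge uniformly on each disc $\{|z|\le r\}$ with $r<1$, I would integrate term by term and invoke the orthonormality of $\{e^{In\theta}\}_{n\ge0}$ in $L^2(\p\B_I)$ to obtain
\[\frac{1}{2\pi}\int_{-\pi}^{\pi}|F(re^{I\theta})|^2\,d\theta=\sum_{n\ge0}|b_n|^2r^{2n},\qquad\frac{1}{2\pi}\int_{-\pi}^{\pi}|G(re^{I\theta})|^2\,d\theta=\sum_{n\ge0}|c_n|^2r^{2n}.\]
Adding these and using the two identities from the previous step gives
\[M_2(f_I,r)^2=\frac{1}{2\pi}\int_{-\pi}^{\pi}|f(re^{I\theta})|^2\,d\theta=\sum_{n\ge0}|a_n|^2r^{2n},\]
which, notably, does not depend on the choice of $I$.

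Finally, since $r\mapsto\sum_{n\ge0}|a_n|^2r^{2n}$ is manifestly nondecreasing in $r$ (consistently with Proposition \ref{incr}), letting $r\to1^-$ and applying the monotone convergence theorem (equivalently, Abel's theorem for series with nonnegative terms) yields $\|f_I\|_2^2=\sum_{n\ge0}|a_n|^2$ in $[0,+\infty]$ for every $I\in\s$. Taking the supremum over $I\in\s$ is then trivial and produces $\|f\|_2=(\sum_{n\ge0}|a_n|^2)^{1/2}$; the assumption $f\in H^2(\B)$ only serves to guarantee that this quantity is finite.

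There is no deep obstacle here: the argument is essentially the classical one, carried out one slice at a time. The only points requiring a little care are the bookkeeping identities $|a_n|^2=|b_n|^2+|c_n|^2$ and $|f_I(z)|^2=|F(z)|^2+|G(z)|^2$, the justification of term-by-term integration (uniform convergence on compact subdiscs together with orthonormality), and the passage $r\to1^-$. Alternatively, one can bypass the Splitting Lemma and expand $|f(re^{I\theta})|^2=f(re^{I\theta})\overline{f(re^{I\theta})}=\sum_{n,m\ge0}r^{n+m}\,e^{In\theta}\,a_n\overline{a_m}\,e^{-Im\theta}$ directly: writing $a_n\overline{a_m}=w_1+w_2J$ with $w_1,w_2\in L_I$ and using $e^{In\theta}J=Je^{-In\theta}$, one checks that $\frac{1}{2\pi}\int_{-\pi}^{\pi}e^{In\theta}\,a_n\overline{a_m}\,e^{-Im\theta}\,d\theta$ vanishes when $n\ne m$ (the two resulting frequencies $n-m$ and $n+m$ cannot both be zero for $n,m\ge0$) and equals $a_n\overline{a_n}=|a_n|^2$ when $n=m$, which again gives $M_2(f_I,r)^2=\sum_{n\ge0}|a_n|^2r^{2n}$.
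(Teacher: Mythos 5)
Your proof is correct. The paper itself gives no proof of Proposition \ref{norma2} (it defers to the reference \cite{milanesi}), but your slicewise reduction via the Splitting Lemma \ref{split} to the classical Parseval identity is precisely the argument the paper's framework is built to support: the bookkeeping identities $|a_n|^2=|b_n|^2+|c_n|^2$ and $|f_I|^2=|F|^2+|G|^2$ are sound, and the key payoff is that $M_2(f_I,r)^2=\sum_{n\ge0}|a_n|^2r^{2n}$ is independent of $I$, which makes the supremum over $\s$ vacuous and the limit $r\to1^-$ a monotone-convergence triviality. Your alternative direct computation also checks out, though the parenthetical justification is slightly misstated: what one needs for the off-diagonal terms is that for $n\neq m$ \emph{each} of the frequencies $n-m$ and $n+m$ is nonzero (the first because $n\neq m$, the second because $n+m=0$ with $n,m\ge0$ would force $n=m=0$), so that both of the two integrals produced by splitting $a_n\overline{a_m}=w_1+w_2J$ vanish separately.
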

\noindent By polarization we obtain
 that, for any $I\in\s$
\[\lim_{r\to 1^-}\frac{1}{2\pi}\int_{-\pi}^{\pi}\overline{g(re^{I\theta})}f(re^{I\theta})d\theta=\sum_{n\ge 0}\overline{b_n}a_n = \langle f, g \rangle \]
that recalls the classic Hermitian product of the space $H^2(\D)$.

Looking at the definition of complex Hardy spaces, one could wonder why, instead of the integral mean $M_p(f_I,r)$ defined in \eqref{mediaint}, taken on a circle, we did not choose the classical integral mean taken on a $3$-dimensional sphere. In fact consider 
\begin{equation}\label{normaS}
N_p(f,r)=\left(\frac{1}{2\pi^2r^3}\int_{r\mathbb{\s}^3} |f(q)|^p d\sigma_3(r\s^3)\right)^{\frac{1}{p}},
\end{equation}
where $\sigma_3(r\s^3)$ is the usual hypersurface measure of the $3$-dimensional sphere $r\mathbb{S}^3$ and $2\pi^2r^3$ is the $3$-dimensional volume of the $3$-dimensional sphere $r\mathbb{S}^3$. If we then set
\[N_p(f)=\sup_{0<r<1}N_p(f,r),\]
it turns out that the class of regular functions $f$ such that $N_p(f)$ is finite does not coincide with the space $H^p(\B)$ defined earlier. Indeed,
using an appropriate change of variable in the integral appearing in \eqref{normaS}, it is possible to prove 
that $N_p(f) \le ||f||_p$ for any regular function $f$ and any $p\in(0+\infty)$, thus showing that if $f\in H^p(f)$ then also $N_p(f)$ 
is finite.
\noindent To show that the two classes of functions do not coincide, it is possible to exhibit explicit examples. For instance, the function
\[f(q)=(1-q)^{-*}\] 
is such that $N_p(f)<+\infty$ but it does not belong to $H^p(\B)$ for all $2\leq p\leq +\infty$.
\noindent This example suggests us that a definition of a quaternionic $H^p$ space that relies upon the $N_p$ norm is not convenient. 
In fact, the function $(1-q)^{-1}$ has real coefficients, and therefore, its restriction to $\B_I=\B\cap L_I$  is the holomorphic function
$f_I(z)=(1-z)^{-1}.$
It is well known that $f_I(z)$ does not belong to $H^p(\B_I)$ for all $2\leq p\leq +\infty$, hence it would be weird if its regular extension were in the quaternionic $H^p$ space. Recently, a normalized integral mean on $3$-spheres has been introduced to define a norm on $H^2(\B)$, \cite{artAS}.

Our choice \eqref{mediaint} of the $p$-integral mean allows the slicewise approach to the theory of $H^p$ spaces. 
Let us begin by studying how $||f||_p$ of a regular function $f$ is related with the value of $||f_I ||_p$ on the slice $L_I$. 
\begin{pro}\label{f_Ihp}
A regular function $f$ is in $H^{p}(\B)$ for some $p \in (0,+\infty]$, if and only if there exists $I \in \s$ such that $||f_I||_p$ is bounded.
\end{pro}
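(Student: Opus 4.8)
The plan is to use the Representation Formula (Theorem~\ref{RF}) to bound $|f(x+yJ)|$ on an arbitrary slice by the values of $f$ on the fixed slice $L_I$, and then to integrate. One direction is trivial: if $f\in H^p(\B)$ then $\|f\|_p=\sup_{K\in\s}\|f_K\|_p<+\infty$, so in particular $\|f_I\|_p<+\infty$ for every $I$, and a fortiori for the given one. The content is in the converse, so suppose $I\in\s$ is such that $\|f_I\|_p<+\infty$; I must show $\|f_K\|_p$ is bounded uniformly in $K\in\s$, equivalently that $\sup_K\|f_K\|_p<+\infty$.

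For $p<+\infty$, fix $r\in[0,1)$ and $K\in\s$, and write $K=x+yJ$-type points as $re^{K\theta}=r\cos\theta + (r\sin\theta)J$ for $J\in\s$. By the Representation Formula applied with $x=r\cos\theta$, $y=r\sin\theta$, one has
\[
|f(re^{J\theta})|\le \tfrac12\bigl(|f(re^{I\theta})|+|f(re^{-I\theta})|\bigr)+\tfrac12\bigl(|f(re^{-I\theta})|+|f(re^{I\theta})|\bigr)=|f(re^{I\theta})|+|f(re^{-I\theta})|,
\]
using $|JI|=1$. Hence $M_p(f_J,r)\le \|\,|f(re^{I\cdot})|+|f(re^{-I\cdot})|\,\|_{L^p(d\theta/2\pi)}$; for $p\ge1$ the triangle inequality in $L^p$ bounds this by $2M_p(f_I,r)$ (after noting $\theta\mapsto-\theta$ leaves the integral invariant), and for $0<p<1$ the elementary inequality $(a+b)^p\le a^p+b^p$ gives $M_p(f_J,r)^p\le 2^{p}\cdot\!$ something, or more cleanly $M_p(f_J,r)\le 2^{1/p}M_p(f_I,r)$. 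Either way $M_p(f_J,r)\le C_p\,M_p(f_I,r)$ with $C_p$ depending only on $p$, uniformly in $r$ and $J$. Letting $r\to1^-$ and using Proposition~\ref{incr} (monotonicity, so the limits exist) yields $\|f_J\|_p\le C_p\|f_I\|_p<+\infty$ for all $J$, whence $\|f\|_p=\sup_J\|f_J\|_p\le C_p\|f_I\|_p<+\infty$, i.e.\ $f\in H^p(\B)$.

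For $p=+\infty$ the argument is the same but easier: the Representation Formula (or the maximum modulus estimate quoted after Theorem~\ref{RF}) gives $\sup_{q\in\B}|f(q)|\le 2\sup_{z\in\B_I}|f(z)|=2\|f_I\|_\infty$, so boundedness of $f_I$ on one slice forces $f\in H^\infty(\B)$. (One could also invoke Proposition~\ref{norma} here, but the Representation Formula suffices directly.) I expect the only mildly delicate point to be bookkeeping the constant $C_p$ in the regime $0<p<1$ and making sure the passage to the limit $r\to1^-$ is legitimate — both are handled by Proposition~\ref{incr}, which guarantees the integral means are increasing in $r$ so that the limits defining $\|f_J\|_p$ exist in $[0,+\infty]$ and the inequality survives the limit. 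The main conceptual ingredient, and the reason the slicewise approach works at all, is precisely the Representation Formula, which pins down the values of $f$ off $L_I$ in terms of its values on $L_I$.
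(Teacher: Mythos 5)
Your proposal is correct and follows essentially the same route as the paper: the trivial direction via the supremum, and the converse by applying the Representation Formula to bound $|f(re^{J\theta})|$ by $|f(re^{I\theta})|+|f(re^{-I\theta})|$, then splitting into the cases $p\ge 1$ (convexity/Minkowski, constant $2$) and $0<p<1$ (subadditivity of $x\mapsto x^p$, constant $2^{1/p}$), with Proposition~\ref{incr} justifying the passage to the limit $r\to 1^-$, and the same two-sided estimate for $p=+\infty$. The constants you obtain agree with those recorded in Remark~\ref{stima}.
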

\begin{proof}
Let $p\in (0,+\infty)$ and suppose first that $||f||_p<+\infty.$  Then trivially, for any $I\in \s$, 
\begin{equation*}
\begin{aligned}
||f_I||_p\le \sup_{I\in\s}||f_I||_p=||f||_p<+\infty.
\end{aligned}
\end{equation*}
To see the other implication, let  $J \in \s$ be such that $||f_J||_p$ is finite, and use the Representation Formula \ref{RF} to write
\begin{equation}\label{f_IHp}
\begin{aligned}
||f||^p_p&= 
 \sup_{I\in \s}\lim_{r\to 1^{-}}\frac{1}{2\pi} \int_{-\pi}^{\pi}\left |\frac{1}{2}\left(f(re^{J\theta})+f(re^{-J\theta})\right)+\frac{IJ}{2}\left(f(re^{-J\theta})-f(re^{J\theta})\right)\right |^p d\theta\\
&\leq \sup_{I\in \s}\lim_{r\to 1^{-}}\frac{1}{2\pi} \int_{-\pi}^{\pi}\left(\frac{1}{2}\left(\left|f(re^{J\theta})\right|+\left|f(re^{-J\theta})\right|+\left|f(re^{-J\theta})\right|+\left|f(re^{J\theta})\right|\right)\right)^p d\theta\\
&=\lim_{r\to 1^{-}}\frac{1}{2\pi} \int_{-\pi}^{\pi}\left(\left|f(re^{J\theta})\right|+\left|f(re^{-J\theta})\right|\right)^p d\theta
\end{aligned}
\end{equation}
where the last integral does not depend on $I\in \s$. 
If $p\geq 1$, taking into account the convexity of the map $x\mapsto x^p$ on the positive real axis, we get that 
\begin{equation*}
\begin{aligned}
||f||^p_p& 
\leq \lim_{r\to 1^{-}}\frac{2^{p-1}}{2\pi} \int_{-\pi}^{\pi}\left(\left|f(re^{J\theta})\right|^p+\left|f(re^{-J\theta})\right|^p\right)d\theta
=2^{p}||f_J||^p_p<+ \infty .
\end{aligned}
\end{equation*}
On the other hand, if $0<p<1$, taking into account the subadditivity on the positive real axis of the map $x\mapsto x^p$ (it is concave and maps $0$ to $0$), we have
\[||f||^p_p
\le\lim_{r\to 1^{-}}\frac{1}{2\pi} \int_{-\pi}^{\pi}\left(\left|f(re^{J\theta})\right|^p+\left|f(re^{-J\theta})\right|^p \dx)d\theta
= \lim_{r\to 1^{-}}\frac{1}{\pi} \int_{-\pi}^{\pi}\sx|f(re^{J\theta})\dx|^pd\theta=2||f_J||_p^p <+ \infty .
\]
Let $p=+\infty$. On one side, if $f$ is bounded, then for any $I\in\s$
\[||f_I||_{\infty}=\sup_{z\in \B_I}|f_I(z)|\le\sup_{q\in \B}|f(q)|=||f||_{\infty}<+\infty.\]
On the other side, using the Representation Formula \ref{RF}, if $J\in \s$ is such that $||f_J||_{\infty}<+\infty$ it is easy to see that
\[||f||_{\infty}\le 2||f_J||_{\infty}<+\infty.\]

\end{proof}

\begin{oss}\label{stima}
In particular, we get that if $||f_J||_p<+\infty$ for some $J\in \s$, we have the following inequalities
\begin{equation*}
||f_J||_p\leq ||f||_p \leq 2^{\frac{1}{p}}||f_J||_p,   \text{ if $p\in(0,1)$,} \qquad ||f_J||_p\leq ||f||_p \leq 2||f_J||_p,   \text{ if $p\in [1,+\infty]$.}
\end{equation*}
\end{oss}
%
The key fact that allows us to apply classical results to the splitting components of a function in $H^p(\B)$ is the following.
\begin{pro}\label{FGHp}
Let $f\in H^p(\B)$ for some $p\in(0,+\infty]$. Then for any $I\in\s$, if the splitting of $f$ on $L_I$ with respect to $J\in \s$, $J \perp I$,  is $f_I(z)=F(z)+G(z)J$, then the holomorphic functions $F$ and $G$ are both in $H^p(\B_I)$. 
\end{pro}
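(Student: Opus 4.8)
The plan is to exploit the orthogonality built into the Splitting Lemma. Fix $I\in\s$ and $J\in\s$ with $J\perp I$, and write $f_I(z)=F(z)+G(z)J$ with $F,G\colon\B_I\to L_I$ holomorphic. The first step is to observe that, for each $z\in\B_I$, the four real components of $f_I(z)$ in the orthonormal basis $\{1,I,J,IJ\}$ of $\HH$ are precisely the two real components of $F(z)\in L_I$ together with the two real components of $G(z)\in L_I$ (here one uses that $IJ$ is an imaginary unit orthogonal to both $I$ and $J$). Consequently
\[
|f_I(z)|^2=|F(z)|^2+|G(z)|^2,
\]
and in particular the pointwise bounds $|F(z)|\le|f_I(z)|$ and $|G(z)|\le|f_I(z)|$ hold throughout $\B_I$.

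For $p\in(0,+\infty)$, I would then raise these inequalities to the $p$-th power and integrate over the circle $\p B_I(0,r)$: for every $r\in[0,1)$,
\[
M_p(F,r)^p=\frac{1}{2\pi}\int_{-\pi}^{\pi}|F(re^{I\theta})|^p\,d\theta\le\frac{1}{2\pi}\int_{-\pi}^{\pi}|f(re^{I\theta})|^p\,d\theta=M_p(f_I,r)^p,
\]
and likewise $M_p(G,r)\le M_p(f_I,r)$. Letting $r\to1^-$, using Proposition \ref{incr} (which guarantees that the integral means are increasing, hence that the limits exist) together with Remark \ref{stima} (or Proposition \ref{f_Ihp}), namely $\|f_I\|_p\le\|f\|_p<+\infty$, one obtains $\|F\|_{H^p(\B_I)}\le\|f\|_p<+\infty$ and $\|G\|_{H^p(\B_I)}\le\|f\|_p<+\infty$, i.e. $F,G\in H^p(\B_I)$.

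For $p=+\infty$ the argument is even more direct: the pointwise inequality $|F(z)|\le|f_I(z)|\le\|f\|_\infty$ for all $z\in\B_I$, and the analogous one for $G$, immediately give $\|F\|_{H^\infty(\B_I)},\|G\|_{H^\infty(\B_I)}\le\|f\|_\infty<+\infty$.

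There is no serious obstacle here; the only point deserving care is the identity $|f_I(z)|^2=|F(z)|^2+|G(z)|^2$, which rests on the mutual orthogonality of $I$, $J$ and $IJ$ and is exactly what makes the splitting components dominated in modulus by $f_I$. Everything else is a routine application of the monotone integral means and the norm comparison already established in this section.
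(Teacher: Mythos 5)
Your proof is correct and follows essentially the same route as the paper's: both rest on the pointwise identity $|f_I(z)|^2=|F(z)|^2+|G(z)|^2$ (hence $|F|,|G|\le|f_I|$), followed by integrating over circles and passing to the limit for finite $p$, and by the direct pointwise bound for $p=+\infty$. Your added justification of the identity via the orthonormality of $\{1,I,J,IJ\}$ is a correct elaboration of what the paper takes for granted.
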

\begin{proof}
Let $I\in\s$ and consider, for any $z\in\B_I$, the splitting $f_I(z)=F(z)+G(z)J$. Then, for any $z\in \B_I$,
\begin{equation}\label{Finfty}
|f_I(z)|=\sqrt{|F(z)|^2+|G(z)|^2}\ge\max\{|F(z)| ,|G(z)|\}.
\end{equation}
Hence, for $p\in(0,+\infty)$,
\begin{equation*}
\begin{aligned}
+\infty &>||f||^p_p
=\lim_{r \to 1^-} \frac{1}{2\pi} \int_{-\pi}^{\pi}\sx|f_I(re^{I\theta})\dx|^p d\theta
\ge \lim_{r\to 1^-} \frac{1}{2\pi} \int_{-\pi}^{\pi}\sx|F(re^{I\theta})\dx|^p d\theta= ||F||^p_p
\end{aligned}
\end{equation*}
and analogously
$+\infty>||f||^p_p \ge ||f_I||^p_p\ge ||G||_p^p.$
For $p=+\infty$, formula \eqref{Finfty} directly implies that
\[ ||F||_{\infty}\le ||f_I||_{\infty}\le||f||_{\infty}<+\infty
\quad \text{ and } \quad 
||G||_{\infty}\le ||f_I||_{\infty}\le||f||_{\infty}<+\infty.\]

\end{proof}

Notice that Remark \ref{stima} and Proposition \ref{FGHp} imply that, for all $p\in [1,+\infty]$, the space  $H^p(\B)$ is a Banach space.

The natural guess that if a function is in $H^p(\B)$ then its regular conjugate is in $H^p(\B)$ as well, is in fact true.
\begin{pro}\label{fcHp}
Let $p\in (0,+\infty]$ and let $f\in H^p(\B)$. Then also the regular conjugate $f^c$ belongs to $H^p(\B)$.
\end{pro}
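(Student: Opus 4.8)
The plan is to reduce the claim about $f^c$ to the already-established facts about the splitting components of $f$, using Proposition \ref{norma} together with Proposition \ref{f_Ihp}. First I would fix any $I\in\s$ and a unit $J\in\s$ with $J\perp I$, and write the splitting of $f$ on $L_I$ as $f_I(z)=F(z)+G(z)J$ with $F,G$ holomorphic on $\B_I$; by the very definition of the regular conjugate (Definition \ref{f^c}), the splitting of $f^c$ on $L_I$ is $(f^c)_I(z)=\overline{F(\bar z)}-G(z)J$. Thus $|(f^c)_I(z)|^2=|\overline{F(\bar z)}|^2+|G(z)|^2=|F(\bar z)|^2+|G(z)|^2$, which is exactly $|F(\bar z)|^2+|G(z)|^2$, to be compared with $|f_I(z)|^2=|F(z)|^2+|G(z)|^2$.

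The key observation is then that integrating over the circle $|z|=r$ is insensitive to the reflection $z\mapsto\bar z$: since $\theta\mapsto-\theta$ is a measure-preserving bijection of $[-\pi,\pi]$, we have
\[
\frac{1}{2\pi}\int_{-\pi}^{\pi}|F(re^{-I\theta})|^p\,d\theta=\frac{1}{2\pi}\int_{-\pi}^{\pi}|F(re^{I\theta})|^p\,d\theta.
\]
Hence, for $p\in(0,+\infty)$, one can estimate $M_p((f^c)_I,r)$ by bounding $|(f^c)_I(re^{I\theta})|\le|F(re^{-I\theta})|+|G(re^{I\theta})|$ (or one can work with $|(f^c)_I(z)|^p\le 2^{(p/2-1)^+}(|F(\bar z)|^p+|G(z)|^p)$ up to the usual convexity/subadditivity constant depending on whether $p\ge 2$ or $p<2$), and then use that both $F$ and $G$ lie in $H^p(\B_I)$ by Proposition \ref{FGHp}, together with the reflection invariance above, to conclude $||(f^c)_I||_p<+\infty$. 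For $p=+\infty$ the argument is even simpler: $|(f^c)_I(z)|^2=|F(\bar z)|^2+|G(z)|^2\le ||F||_\infty^2+||G||_\infty^2<+\infty$, so $(f^c)_I$ is bounded on $\B_I$.

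Finally, since $||(f^c)_I||_p$ is finite for this one choice of $I$, Proposition \ref{f_Ihp} applies to the regular function $f^c$ and yields $f^c\in H^p(\B)$, which is the assertion. Alternatively, and perhaps more cleanly, one can bypass the splitting altogether: by Proposition \ref{norma}, for every sphere $x+y\s\subset\B$ one has $\max_{I\in\s}|f^c(x+yI)|=\max_{I\in\s}|f(x+yI)|$, so in particular $|f^c(re^{I\theta})|\le\max_{K\in\s}|f(re^{K\theta})|$; I would like to integrate this pointwise bound, but the right-hand side is a supremum over the sphere and need not be comparable to $|f(re^{I\theta})|$ for the fixed slice. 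The honest way around this is exactly the splitting computation above, so I expect the main (though modest) obstacle to be the careful bookkeeping of the $L^p$ constants in the inequality $|(f^c)_I|^p\lesssim |F\circ(\ \bar{\cdot}\ )|^p+|G|^p$, distinguishing the cases $p\ge 2$, $1\le p<2$, and $0<p<1$, exactly as was done in the proof of Proposition \ref{f_Ihp}.
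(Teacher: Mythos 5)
Your proposal is correct and follows essentially the same route as the paper: split $f_I=F+GJ$, observe $(f^c)_I(z)=\overline{F(\bar z)}-G(z)J$, absorb the reflection $z\mapsto\bar z$ by the measure-preserving change of variable $\theta\mapsto-\theta$, bound $\bigl(|F|^2+|G|^2\bigr)^{p/2}$ via subadditivity or convexity of $x\mapsto x^{p/2}$ according to whether $p<2$ or $p\ge 2$, invoke Proposition \ref{FGHp}, and conclude with Proposition \ref{f_Ihp}. The only cosmetic difference is that the paper handles $p=+\infty$ directly via Proposition \ref{norma} rather than through the splitting, and the relevant case split for the constants is at $p=2$ only (not additionally at $p=1$), which your $2^{(p/2-1)^+}$ already encodes.
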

\begin{proof}
If $p=+\infty$ the proof follows directly by Proposition \ref{norma}.
Consider then $p\in(0,+\infty)$. For any $I\in \s$, if $f$ splits on $\B_I$ as
$f(z)=F(z)+G(z)J,$
then, recalling Definition \ref{f^c}, we get that on the same slice $f^c$ can be written as
\[f_I^c(z)=\overline{F(\overline{z})}-G(z)J.\]
Therefore, for $0\leq r<1$,
\begin{equation*}
\begin{aligned}
M_p(f_I^c,r)^p
=\frac{1}{2\pi}\int_{-\pi}^{\pi}\left|\overline{F(re^{-I\theta})}-G(re^{I\theta})J\right|^pd\theta
=\frac{1}{2\pi}\int_{-\pi}^{\pi}\left(\left|\overline{F(re^{-I\theta})}\right|^2+\left|G(re^{I\theta})\right|^2\right)^{\frac{p}{2}}d\theta.
\end{aligned}
\end{equation*}
If $0<p<2$, thanks to the subadditivity on the positive real axis of the map $x\mapsto x^{p/2}$, we get
\begin{equation*}
\begin{aligned}
M_p(f^c_I,r)^p&
\le\frac{1}{2\pi}\int_{-\pi}^{\pi}\left(\left|\overline{F(re^{-I\theta})}\right|^p+\left|G(re^{I\theta})\right|^p\right)d\theta
= M_p(F,r)^p+M_p(G,r)^p.
\end{aligned}
\end{equation*}
Proposition \ref{FGHp} yields that both $F$ and $G$ belong to $H^p(\B_I)$, hence 
\begin{equation*}
\begin{aligned}
||f_I^c||^p_p&=\lim_{r\to1^-}M_p(f_I^c,r)^p\le\lim_{r\to1^-}\sx(M_p(F,r)^p+M_p(G,r)^p\dx)=||F||^p_p+||G||_p^p<+\infty
\end{aligned}
\end{equation*}
and therefore we obtain that $f^c(q)\in H^p(\B).$\\
If $2\le p <+\infty$, thanks to the convexity of the map $x\mapsto x^{p/2}$, we can bound $M_p(f_I^c,r)$ as follows
\[
M_p(f_I^c,r)^p
\le \frac{2^{\frac{p}{2}-1}}{2\pi}\int_{-\pi}^{\pi}\left(\left|\overline{F(re^{-I\theta})}\right|^p+\left|G(re^{I\theta})\right|^p\right) d\theta =2^{\frac{p}{2}-1}\left(M_p(F,r)^p+M_p(G,r)^p\right).
\]
Hence, as before,
\begin{equation*}
\begin{aligned}
||f_I^c||^p_p
&\le2^{\frac{p}{2}-1}\left(||F||^p_p+||G||_p^p\right)<+\infty,
\end{aligned}
\end{equation*}
which concludes the proof.
\end{proof}
For the symmetrization of a function in $H^p(\B)$ the following result holds true. 
\begin{pro}\label{fsbounded}
For any $p\in(0,+\infty)$, if $f\in H^p(\B)$, then the symmetrization $f^s\in H^{\frac{p}{2}}(\B)$. Moreover if $f\in H^{\infty}(\B)$ then also $f^s$ does. 
\end{pro}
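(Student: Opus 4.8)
The plan is to reduce the statement about $f^s = f * f^c$ to the already-proved fact that the regular conjugate preserves Hardy membership (Proposition \ref{fcHp}), together with the pointwise description of the $*$-product from Proposition \ref{trasf}. First I would dispose of the case $p = +\infty$: if $f \in H^\infty(\B)$, then $f^c \in H^\infty(\B)$ by Proposition \ref{fcHp} (or directly by Proposition \ref{norma}), and since $f^s(q) = f*f^c(q)$ equals either $0$ or $f(q)\, f^c(f(q)^{-1} q f(q))$ by formula \eqref{prodstar}, we get $|f^s(q)| \le |f(q)|\,|f^c(f(q)^{-1}qf(q))| \le \|f\|_\infty \|f^c\|_\infty$ for all $q \in \B$, hence $f^s \in H^\infty(\B)$.

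For $p \in (0,+\infty)$, the core estimate is again the pointwise bound $|f*f^c(q)| \le |f(q)|\,|f^c(f(q)^{-1} q f(q))|$ coming from Proposition \ref{trasf}. The difficulty is that the argument $f(q)^{-1}qf(q)$ of $f^c$ lies on the same sphere $x+y\s$ as $q$ but on a different slice, so a naive slicewise integral of $|f*f^c(re^{I\theta})|^{p/2}$ mixes values of $f^c$ on different slices. The way around this is to work on a fixed slice $\B_I$ and use Proposition \ref{maxminslice}-type control: more precisely, for $q = re^{I\theta}$ we have $|f(q)^{-1}qf(q)| = |q| = r$, and $f^c(f(q)^{-1}qf(q))$ is a value of $f^c$ on the sphere $r\s$ lying over the pair $\{re^{I\theta}, re^{-I\theta}\}$, so by the Representation Formula (Theorem \ref{RF}) it is bounded in modulus by $\max\{|f^c(re^{I\theta})|, |f^c(re^{-I\theta})|\} \le |f^c(re^{I\theta})| + |f^c(re^{-I\theta})|$. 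Therefore
\[
|f^s(re^{I\theta})|^{\frac p2} \le |f(re^{I\theta})|^{\frac p2}\bigl(|f^c(re^{I\theta})| + |f^c(re^{-I\theta})|\bigr)^{\frac p2}.
\]
Integrating in $\theta$ over $[-\pi,\pi]$ and applying the Cauchy--Schwarz inequality (splitting the exponent $\tfrac p2$ as the geometric mean of weights summing to, effectively, a product of two $L^p$ quantities) reduces the right-hand side to a constant times $M_p(f_I, r)^{p/2}\bigl(M_p(f^c_I, r)^{p/2}\bigr)$, up to absorbing the $|f^c(re^{-I\theta})|$ term by a change of variable $\theta \mapsto -\theta$ (which does not change $M_p$).

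Concretely, I would write, for a fixed $I \in \s$,
\[
M_{p/2}(f^s_I, r)^{p/2} = \frac{1}{2\pi}\int_{-\pi}^{\pi} |f^s(re^{I\theta})|^{\frac p2}\, d\theta \le \frac{1}{2\pi}\int_{-\pi}^{\pi} |f(re^{I\theta})|^{\frac p2}\bigl(|f^c(re^{I\theta})| + |f^c(re^{-I\theta})|\bigr)^{\frac p2} d\theta,
\]
then bound $(a+b)^{p/2}$ by $2^{p/2}\max\{a,b\}^{p/2} \le 2^{p/2}(a^{p/2}+b^{p/2})$ and use Cauchy--Schwarz (exponents $2$ and $2$ on $|f|^{p/2}$ and $|f^c|^{p/2}$) to get
\[
M_{p/2}(f^s_I, r)^{p/2} \le 2^{p/2}\, M_p(f_I, r)^{p/2}\,\Bigl( M_p(f^c_I, r)^{p/2} + M_p(f^c_I, r)^{p/2}\Bigr) = 2^{p/2+1} M_p(f_I, r)^{p/2} M_p(f^c_I, r)^{p/2},
\]
where the equality of the two $M_p(f^c_I,r)$ terms uses the substitution $\theta \mapsto -\theta$. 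Letting $r \to 1^-$ and using Proposition \ref{fcHp} to know $M_p(f^c_I, r) \to \|f^c_I\|_p \le \|f^c\|_p < +\infty$, and $M_p(f_I,r)\to \|f_I\|_p \le \|f\|_p<+\infty$, gives $\|f^s_I\|_{p/2} < +\infty$ for every $I$; taking the supremum over $I \in \s$ yields $\|f^s\|_{p/2} < +\infty$, i.e.\ $f^s \in H^{p/2}(\B)$. The main obstacle I anticipate is handling the off-slice argument of $f^c$ cleanly — making sure the Representation Formula bound is applied correctly so that the whole computation genuinely stays on the single slice $\B_I$ — and being careful with the Hölder/Cauchy--Schwarz bookkeeping when $p < 2$ so that all exponents used are $\ge 1$; if a cleaner route is available, one could instead invoke Proposition \ref{norma} to replace $|f^c|$ on the sphere $r\s$ directly by $\max_{K}|f^c(x+yK)| = \max_K|f(x+yK)|$, trading the dependence on $f^c$ for one on $f$ and shortening the estimate.
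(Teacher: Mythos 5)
Your argument is correct, but it takes a genuinely different route from the paper at the key step. Both proofs start from Proposition \ref{trasf}, writing $|f^s(re^{I\theta})|=|f(re^{I\theta})|\,|f^c(re^{J\theta})|$ with $re^{J\theta}$ on the sphere through $re^{\pm I\theta}$. The paper then eliminates $f^c$ entirely via Proposition \ref{norma}: since $\sup_{J}|f^c(re^{J\theta})|=\sup_{J}|f(re^{J\theta})|$, one gets $|f^s(re^{I\theta})|\le|f(re^{K(r,\theta)\theta})|^2$ for some $K(r,\theta)\in\s$, and a single application of the Representation Formula with a fixed slice $L$ plus elementary convexity/subadditivity of $x\mapsto x^p$ yields $M_{p/2}(f^s_I,r)^{p/2}\le 2^{\max\{1,p\}}\|f\|_p^p$ --- no Cauchy--Schwarz and no need for Proposition \ref{fcHp}. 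You instead keep the factor $f^c$, pull its off-slice argument back to $\B_I$ with the Representation Formula, and separate the mixed integral $\int|f|^{p/2}|f^c|^{p/2}$ by Cauchy--Schwarz, which forces you to invoke Proposition \ref{fcHp} to control $M_p(f^c_I,r)$. Your bookkeeping is sound for all $p\in(0,+\infty)$ (Cauchy--Schwarz with exponent $2$ applied to $|f|^{p/2}$ and $|f^c|^{p/2}$ needs no restriction on $p$, and $(a+b)^{p/2}\le 2^{p/2}(a^{p/2}+b^{p/2})$ holds for every $p>0$), the bound is uniform in $I$, and the $p=+\infty$ case is fine. One small correction: the intermediate claim that the Representation Formula bounds $|f^c(re^{J\theta})|$ by $\max\{|f^c(re^{I\theta})|,|f^c(re^{-I\theta})|\}$ is false for a general regular function --- that max/min control is Proposition \ref{maxminslice} and requires the function to preserve the slice $L_I$; what Theorem \ref{RF} actually gives is the bound by the \emph{sum} $|f^c(re^{I\theta})|+|f^c(re^{-I\theta})|$, which is exactly what you use afterwards, so the chain of inequalities survives, but the max step should be deleted rather than justified. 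In the end the paper's route is shorter and self-contained (it needs only Proposition \ref{norma}), while yours is a more routine exercise in integral inequalities that reuses the already-established closure of $H^p(\B)$ under regular conjugation.
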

\begin{proof} Let $f\in H^p(\B)$ for some $0<p<+\infty$.
For any $r\in[0,1), I\in \s$ (such that $f(re^{I\theta})\neq 0$) we have
\begin{equation*}
\left|f^s(re^{I\theta})\right|=\left|f*f^c(re^{I\theta})\right|=\left|f(re^{I\theta})f^c(f(re^{I\theta})^{-1}re^{I\theta}f(re^{I\theta}))\right|=\left|f(re^{I\theta})f^c(re^{J\theta})\right|
\end{equation*}
for some $J\in\s$.
Hence, recalling Proposition \ref{norma},
\begin{equation}\label{modulofs}
\left|f^s(re^{I\theta})\right|\le\sup_{I\in\s}\left|f(re^{I\theta})\right|\sup_{J\in\s}\left|f^c(re^{J\theta})\right|=\left(\sup_{I\in\s}\left|f(re^{I\theta})\right|\right)^2=\left|f(re^{K(r,\theta)\theta})\right|^2
\end{equation}
for a suitable $K(r,\theta)\in \s$ which depends on $r,\theta$ but does not depend on $I$ (since $\s$ is compact).
For such a $K(r,\theta)$ inequality \eqref{modulofs} implies
\[M_{\frac p2}(f^s_I, r)^{\frac p2}=
 \frac{1}{2\pi}\int_{-\pi}^{\pi}\left|f^s(re^{I\theta})\right|^{\frac{p}{2}} d\theta\leq
\frac{1}{2\pi}\int_{-\pi}^{\pi}\left|f(re^{K(r,\theta)\theta})\right|^p d\theta .\]
Let now $L$ be any (fixed) imaginary unit independent of $K(r,\theta)$. Thanks to the Representation Formula \ref{RF} we can write  
\[f(re^{K(r,\theta)\theta})=\frac{1}{2}\left(f(re^{L\theta})+f(re^{-L\theta})\right)+\frac{K(r,\theta)L}{2}\left(f(re^{-L\theta})-f(re^{L\theta})\right),\]
and hence we have
\[M_{\frac p2}(f^s_I, r)^{\frac p2}\leq\frac{1}{2\pi}\int_{-\pi}^{\pi}\left(\left|(f(re^{L\theta})\right|+\left|f(re^{-L\theta})\right|\right)^p d\theta.\]
Then, if $p\in(0,1)$, we get that
\[M_{\frac p2}(f^s_I, r)^{\frac p2}\leq 2 ||f_L||^p_p\leq 2||f||_p^p<+\infty,\]
and if $p\in[1,+\infty)$, we get that 
\[M_{\frac p2}(f^s_I, r)^{\frac p2}\leq 2^{p} ||f_L||^p_p\leq 2^p||f||_p^p<+\infty.\]
In both cases, taking the limit for $r\to 1^-$, and the supremum in $I\in \s$,  we get that $f^s\in H^{\frac{p}{2}}(\B)$.

If instead $f\in H^{\infty}(\B)$, we observe that inequality \eqref{modulofs} directly implies that, for any $q\in \B$,
\[|f^s(q)|\le \sup_{q\in\B}|f(q)|^2=||f||^2_{\infty}.\]
\end{proof}

The H\"older inequality leads to the following result concerning the $*$-product of regular functions.
\begin{pro}\label{holder}
Let $p,q\in [1, +\infty]$ be conjugate exponents $\frac1p + \frac 1q =1$. If $f\in H^p(\B)$ and $g \in H^q(\B)$, then $f*g\in H^1(\B)$ and $g*f\in H^1(\B)$ .
\end{pro}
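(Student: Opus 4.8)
The plan is to reduce the statement to the classical complex Hölder inequality for Hardy spaces on the disc $\B_I$, by exploiting the identity \eqref{prodstar} together with Proposition~\ref{norma} to control $|f*g|$ on each circle by a product of slicewise maxima of $|f|$ and $|g|$. First I would fix $I\in\s$ and estimate, for $0\le r<1$,
\[
\int_{-\pi}^{\pi}|f*g(re^{I\theta})|\,d\theta.
\]
Using Proposition~\ref{trasf}, at each point $q=re^{I\theta}$ with $f(q)\neq0$ we have $f*g(q)=f(q)g(f(q)^{-1}qf(q))$, where $f(q)^{-1}qf(q)$ lies on the same sphere $x+y\s$ as $q$; hence $|f*g(q)|\le |f(q)|\,\sup_{K\in\s}|g(re^{K\theta})|$. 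By Proposition~\ref{norma} applied to $g$ (whose maximum modulus over a sphere equals that of $g^c$, but more directly we just use $\sup_{K}|g(re^{K\theta})|$), and by the Representation Formula~\ref{RF}, the slicewise supremum $\sup_{K}|g(re^{K\theta})|$ is comparable to $|g(re^{L\theta})|+|g(re^{-L\theta})|$ for any fixed $L\in\s$. The same trick bounds $|f(q)|$ itself by its values on a fixed slice if needed, but here it suffices to keep $|f(q)|=|f(re^{I\theta})|$.

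The key steps, in order, are: (1) write $|f*g(re^{I\theta})|\le |f(re^{I\theta})|\cdot\bigl(|g(re^{L\theta})|+|g(re^{-L\theta})|\bigr)$ for a fixed auxiliary unit $L\perp I$ (or simply $L=I$), using \eqref{prodstar} and Representation Formula~\ref{RF}; (2) integrate in $\theta$ and apply the classical Hölder inequality with exponents $p,q$ on the circle $\partial\B_I$, obtaining
\[
\frac{1}{2\pi}\int_{-\pi}^{\pi}|f*g(re^{I\theta})|\,d\theta\le M_p(f_I,r)\cdot\Bigl(M_q(g_L,r)+M_q(g_{-L},r)\Bigr)\ \text{(up to a constant)};
\]
(3) let $r\to1^-$ and use Proposition~\ref{incr} so that the right-hand side is bounded by $\|f_I\|_p\,\bigl(\|g_L\|_q+\|g_{-L}\|_q\bigr)$, which by Remark~\ref{stima} (or Proposition~\ref{f_Ihp}) is finite since $f\in H^p(\B)$ and $g\in H^q(\B)$; (4) take the supremum over $I\in\s$ to conclude $\|f*g\|_1<+\infty$, i.e. $f*g\in H^1(\B)$. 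For $g*f$ one argues symmetrically, using $g*f(q)=g(q)f(g(q)^{-1}qg(q))$. The cases where one exponent is $+\infty$ (so the other is $1$) are even easier: if $g\in H^\infty(\B)$ then $|g(re^{K\theta})|\le\|g\|_\infty$ uniformly, and one directly gets $\|f*g\|_1\le\|g\|_\infty\|f\|_1$.

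The main obstacle is step~(1): one must be careful that the point $f(q)^{-1}qf(q)$ at which $g$ is evaluated is genuinely on the sphere $x+y\s$ through $q=re^{I\theta}$ (it is, since conjugation preserves real part and modulus), and then convert the a priori $I$-dependent and $\theta$-dependent unit there into a fixed slice via the Representation Formula, at the cost of the harmless factor $2$ (or $2^{p-1}$, $2^{q-1}$ after raising to powers inside the integral). One also needs the set $\{\theta: f(re^{I\theta})=0\}$ to have measure zero, which follows from Theorem~\ref{zeri} since the zero set of $f_I$ is discrete; on that null set the integrand vanishes by \eqref{prodstar}, so it contributes nothing. Once these points are handled, the estimate is a routine application of the classical disc Hölder inequality slice by slice.
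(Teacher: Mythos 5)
Your proposal is correct and follows essentially the same route as the paper's proof: express $f*g(re^{I\theta})$ as $f(re^{I\theta})\,g(re^{J(r,\theta)\theta})$ via Proposition~\ref{trasf}, control $|g|$ at the rotated unit by its values on a fixed slice through the Representation Formula (at the cost of a factor $2$), and then apply the classical H\"older inequality on $\p\B_I$, with the endpoint cases $\{p,q\}=\{1,\infty\}$ handled directly. Your explicit remarks about the zero set of $f_I$ being discrete and the conjugated point staying on the sphere $x+y\s$ are correct and only make explicit what the paper leaves implicit.
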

\begin{proof} If $f\equiv 0$ (or $g\equiv 0$) there is nothing to prove. Otherwise, let us consider the case in which $p,q\in (1, +\infty)$.
Fix $I\in \s$. For all $r\in [0,1)$ and $\theta \in [-\pi, \pi)$ such that $f(re^{I\theta})\neq 0$, set $$J(r, \theta)=(f(re^{I\theta}))^{-1}If(re^{I\theta}) \in \s$$ so that recalling Proposition \ref{trasf} we can write
\begin{equation}\label{eq1}
f*g(re^{I\theta})=f(re^{I\theta})g(re^{J(r,\theta)\theta}).
\end{equation}
The function $g$ can be represented as
$$
g(re^{J(r,\theta)\theta})=\frac12(g(re^{I\theta})+g(re^{-I\theta}))+\frac{J(r, \theta)I}{2}(g(re^{-I\theta})-g(re^{I\theta}))
$$
and hence, using the convexity of the function $x \mapsto x^q$ for $x\geq 0$ (since $q>1$), 
\begin{equation}\label{eq2}
|g(re^{J(r,\theta)\theta})|^q\leq 2^{q-1}(|g(re^{I\theta})|^q+|g(re^{-I\theta})|^q).
\end{equation}
Now, using the H\"older inequality and equations \eqref{eq1}, \eqref{eq2} we obtain
\begin{equation}
\begin{aligned}
\frac{1}{2\pi}\int_{-\pi}^{\pi}|f*g(re^{I\theta})|d\theta &
\leq \Big( \frac{1}{2\pi}\int_{-\pi}^{\pi}|f(re^{I\theta})|^pd\theta\Big)^{\frac1p} \Big( \frac{1}{2\pi}\int_{-\pi}^{\pi}|g(re^{J(r,\theta)\theta})|^qd\theta\Big)^{\frac1q} \\
&\le \Big( \frac{1}{2\pi}\int_{-\pi}^{\pi}|f(re^{I\theta})|^pd\theta\Big)^{\frac1p} \Big( \frac{2^q}{2\pi}\int_{-\pi}^{\pi}|g(re^{I\theta})|^qd\theta\Big)^{\frac1q},
\end{aligned}
\end{equation}
which leads to
$$
||f*g||_1\le 2||f||_p||g||_q < +\infty.
$$
For the remaining cases  ($p=1, q=+\infty$ and $p=+\infty, q=1$) the proofs follow the same lines.
\end{proof}

\begin{coro}\label{holder11}
Let $p\in [2, +\infty]$. If $f$ and $g$ belong to $H^p(\B)$, then $f*g\in H^1(\B)$ and $g*f\in H^1(\B)$ .
\end{coro}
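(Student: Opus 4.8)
\emph{Proof proposal.} The plan is to deduce the statement from the H\"older-type result, Proposition \ref{holder}, by first sliding one of the two functions into a smaller Hardy space via the inclusion relations recorded in Remark \ref{inclusion}. Given $p\in[2,+\infty]$, let $q\in[1,2]$ denote its conjugate exponent, so that $\frac1p+\frac1q=1$; explicitly $q=\frac{p}{p-1}$ when $p<+\infty$ and $q=1$ when $p=+\infty$. The elementary point that makes everything work is the chain of inequalities $1\le q\le 2\le p\le +\infty$, in particular $0<q\le p$.

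With this in hand, I would invoke the inclusion $H^p(\B)\subseteq H^q(\B)$, valid whenever $0<q\le p\le +\infty$ by Remark \ref{inclusion} (which is a consequence of the classical Jensen inequality applied slicewise, together with Proposition \ref{f_Ihp}). Hence from $f,g\in H^p(\B)$ we obtain in particular $f\in H^p(\B)$ and $g\in H^q(\B)$. Applying Proposition \ref{holder} to the conjugate pair of exponents $(p,q)$ then gives $f*g\in H^1(\B)$ and $g*f\in H^1(\B)$, which is precisely the assertion of the corollary. Symmetrically, one could instead keep $g\in H^p(\B)$ and use $f\in H^q(\B)$; both routes yield the two products.

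I do not expect any genuine obstacle: the content is entirely contained in Proposition \ref{holder} and the inclusion $H^p(\B)\subseteq H^q(\B)$. The only things to verify are the trivial inequality $q\le p$ for conjugate exponents with $p\ge 2$, and that the endpoint cases are covered, namely $p=2$ (where $q=p=2$, so $f,g\in H^2(\B)$ directly) and $p=+\infty$ (where $q=1$ and $g\in H^\infty(\B)\subseteq H^1(\B)$); both are handled by the statements of Proposition \ref{holder} and Remark \ref{inclusion} as given.
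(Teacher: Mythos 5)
Your proposal is correct and follows exactly the paper's own argument: for $p\ge 2$ the conjugate exponent $q$ satisfies $1\le q\le 2\le p$, so $H^p(\B)\subseteq H^q(\B)$ by Remark \ref{inclusion}, and Proposition \ref{holder} applied to the pair $(p,q)$ gives the conclusion. No gaps.
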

\begin{proof}
If $\frac1p + \frac 1q =1$ and $p\ge2$, then $1\le q\le 2$. Therefore $H^p(\B) \subset H^q(\B)$ and hence $g$ belongs to $H^q(\B)$. Theorem \ref{holder} leads to the conclusion.
\end{proof}

\section{Boundary values of regular functions}\label{boundary}

A very important result, that in the classic case is quite laborious to reach, states that all functions in $H^p(\B)$ have radial limit along almost any ray.
\begin{pro}\label{radiale}
Let $f\in H^p(\B)$ for some $p\in(0,+\infty]$. Then for any $I\in \s$, the limit
\[\lim_{r\to 1^-}f(re^{I\theta})=\tilde f(e^{I\theta})\]
exists for almost every $\theta \in [0, 2\pi)$.
\end{pro}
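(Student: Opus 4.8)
The plan is to reduce the quaternionic statement to the classical one for holomorphic functions of one complex variable via the Splitting Lemma. Fix an imaginary unit $I\in\s$ and a unit $J\in\s$ with $J\perp I$, and write the splitting $f_I(z)=F(z)+G(z)J$ on $\B_I$. By Proposition \ref{FGHp}, since $f\in H^p(\B)$, both holomorphic functions $F$ and $G$ belong to the classical Hardy space $H^p(\B_I)$ (where $\B_I$ is a disc and $L_I\cong\cc$). Now I would invoke the classical theorem on radial limits of $H^p$ functions (for instance Theorem 2.2 in \cite{duren}): a holomorphic function in $H^p$ of the disc has a finite radial limit at almost every boundary point. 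Applying this to $F$ and to $G$ separately, there is a set $E_I\subset[0,2\pi)$ of full measure such that $\lim_{r\to1^-}F(re^{I\theta})$ and $\lim_{r\to1^-}G(re^{I\theta})$ both exist for $\theta\in E_I$. Then for $\theta\in E_I$,
\[
\lim_{r\to1^-}f(re^{I\theta})=\lim_{r\to1^-}\bigl(F(re^{I\theta})+G(re^{I\theta})J\bigr)=\lim_{r\to1^-}F(re^{I\theta})+\Bigl(\lim_{r\to1^-}G(re^{I\theta})\Bigr)J
\]
exists, and we call this limit $\tilde f(e^{I\theta})$. This handles the statement for the single fixed slice $L_I$.

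The only subtlety is to make sure the classical result applies for all $0<p\le+\infty$ and that the null set depends only on $I$; the case $p=+\infty$ is even easier, since then $f$ is bounded, so $F$ and $G$ are bounded holomorphic functions on the disc and Fatou's theorem gives radial (indeed nontangential) limits almost everywhere. For $0<p<+\infty$ one uses the standard $H^p$ theory of the disc; the hypothesis $f\in H^p(\B)$ gives $\|f_I\|_p<\infty$ by Proposition \ref{f_Ihp} and hence $F,G\in H^p(\B_I)$ by Proposition \ref{FGHp}, so nothing extra is needed. Taking $E_I$ to be the intersection of the two full-measure sets coming from $F$ and $G$ keeps a single exceptional null set per slice.

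The main (really the only) obstacle is purely a bookkeeping one: the statement as phrased quantifies over a single $I\in\s$, so no uniformity in $I$ is required here, and the proof is a clean slicewise reduction. If one wanted a version uniform in $I$ — a set of $\theta$ good for every slice simultaneously — that would be genuinely harder and would need the Representation Formula \ref{RF} to transfer the boundary limit from one slice to all others; but that is not what is being claimed at this point. So I expect the proof to be short: cite \cite{FGHp} (Proposition \ref{FGHp}), cite the classical Fatou/$H^p$ radial-limit theorem for the disc, and combine the two limits through the splitting $f_I=F+GJ$.
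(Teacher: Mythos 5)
Your proposal is correct and follows exactly the paper's own argument: split $f_I=F+GJ$, use Proposition \ref{FGHp} to place $F$ and $G$ in the classical $H^p(\B_I)$, invoke the classical radial-limit theorem for each, and intersect the two full-measure sets. The paper's proof is precisely this slicewise reduction (citing \cite{Hoffman} for the classical result), so nothing further is needed.
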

\begin{proof} Let $I\in \s$. Write the splitting of $f$ on $\B_I$ as $f_I(z)=F(z)+G(z)J$, then Proposition \ref{FGHp} yields that the holomorphic functions $F$ and $G$ are both in $H^p(\B_I)$.
Classical results in the theory of $H^p$ spaces (see e.g. \cite{Hoffman}) yield that
the radial limits
\[ \lim_{r\to1^{-}}F(re^{I\theta})=\widetilde F(e^{I\theta}) \quad \text{and} \quad \lim_{r\to1^{-}}G(re^{I\phi})=\widetilde G(e^{I\phi})\]
exist respectively for almost every $\theta$ and for almost every $\phi$. 
Therefore the radial limit
\[\lim_{r\to 1^-}f(re^{I\theta})=\lim_{r\to 1^-}(F(re^{I\theta})+G(re^{I\theta})J)=\widetilde F(re^{I\theta})+\widetilde G(re^{I\theta})J=\tilde f(e^{I\theta})\]
exists for almost every $\theta \in [0,2\pi)$.
\end{proof}
\begin{oss} The previous result is slightly stronger than its complex counterpart: in fact, if $f\in H^p(\B)$ for some $p \in (0, +\infty]$, then on {\em each} slice, the radial limit of $f$ exists along almost any ray. 
\end{oss}

From now on, we will denote by $\tilde{f}$ the \emph{radial limit} of a function $f\in  H^p(\B)$. As it happens in the complex case, the function  $\tilde f$ is measurable on $\p \B$.

\begin{pro}\label{quasiognisfera}
Let $f\in H^p(\B)$ for some $p\in(0,+\infty]$. Then for almost every $\theta\in [-\pi, \pi)$, the limit
\[\lim_{r\to 1^-}f(re^{I\theta})=\tilde f(e^{I\theta})\]
exists for every $I \in \s$. Namely, the radial limit of $f$ exists at all points of the sphere $\cos \theta +(\sin \theta ) \s$ (contained in the boundary of $\B$) for almost every $\theta \in [-\pi, \pi)$.
\end{pro}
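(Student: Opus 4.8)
The plan is to reduce everything to the single-slice statement already obtained in Proposition \ref{radiale} together with the Representation Formula \ref{RF}. First I would fix two distinct imaginary units, say $I_1, I_2 \in \s$ which are $\rr$-linearly independent (for instance $I_1 \perp I_2$). By Proposition \ref{radiale}, applied to the slice $L_{I_1}$, there is a set $E_1 \subset [-\pi,\pi)$ of full measure such that $\lim_{r\to 1^-} f(re^{I_1\theta})$ exists for every $\theta \in E_1$; similarly there is a full-measure set $E_2$ for the slice $L_{I_2}$. Let $E = E_1 \cap E_2$, which still has full measure in $[-\pi,\pi)$.

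Next I would exploit the structure of $f$ on a sphere $x+y\s$ with $x=\cos\theta$, $y=\sin\theta$. For $\theta \in E$ and any $J \in \s$, the Representation Formula gives, for every $r\in[0,1)$,
\[
f(re^{J\theta}) = \frac{1}{2}\bigl(f(re^{I_1\theta}) + f(re^{-I_1\theta})\bigr) + \frac{JI_1}{2}\bigl(f(re^{-I_1\theta}) - f(re^{I_1\theta})\bigr).
\]
The key observation is that $\theta \in E_1$ guarantees the radial limit of $f$ along $e^{I_1\theta}$ exists, but I also need the radial limit along $e^{-I_1\theta} = e^{I_1(-\theta)}$. This is where a small wrinkle appears: I must arrange that $E$ is symmetric, i.e. $\theta \in E \iff -\theta \in E$. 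Since the intersection of full-measure sets is full-measure and the reflection $\theta \mapsto -\theta$ preserves Lebesgue measure, replacing $E$ by $E \cap (-E)$ keeps full measure and fixes this. Then for $\theta \in E$ both $\lim_{r\to1^-} f(re^{I_1\theta})$ and $\lim_{r\to 1^-} f(re^{-I_1\theta})$ exist, and passing to the limit $r\to 1^-$ in the displayed formula shows that $\lim_{r\to 1^-} f(re^{J\theta})$ exists for \emph{every} $J\in\s$ simultaneously, with the limit given by the same affine-in-$J$ expression evaluated at the boundary. This yields the claim, with the limit denoted $\tilde f(e^{J\theta})$ consistently with the notation introduced after Proposition \ref{radiale}.

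The main obstacle — really the only subtle point — is precisely the bookkeeping about which rays one controls: Proposition \ref{radiale} gives convergence along $e^{I\theta}$ for a.e.\ $\theta$ on a \emph{fixed} slice, and to feed the Representation Formula one needs convergence along both $e^{I_1\theta}$ and its conjugate $e^{-I_1\theta}$; hence the need to symmetrize the exceptional set. Everything else is a direct passage to the limit in a finite linear combination, using that the coefficients $\tfrac12$, $\tfrac{JI_1}{2}$ are bounded and that the limits along $\pm I_1$ exist. One could alternatively argue via the Splitting Lemma \ref{split} and the fact that a holomorphic function $F$ on $\B_{I_1}$ in $H^p$ has radial limits a.e., combined with the evenness/oddness of the components appearing in $f^c$ and in the Representation Formula, but the route above via Proposition \ref{radiale} is the most economical and is what I would write.
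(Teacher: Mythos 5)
Your proof is correct and follows essentially the same route as the paper: the paper also fixes one imaginary unit, invokes the existence of radial limits at $e^{I\theta}$ and $e^{-I\theta}$ for almost every $\theta$ (which is exactly your symmetrization of the exceptional set, left implicit there), and passes to the limit in the Representation Formula. The second unit $I_2$ and the set $E_2$ in your write-up are never actually used and could be dropped.
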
 
\begin{proof}Choose $I,J\in \s$ and use the Representation Formula for $f$
\begin{equation*}
f(re^{J\theta})=\frac{1}{2}\left[ f(re^{I\theta})+f(re^{-I\theta})\right]+\frac{JI}{2}\left[f(re^{-I\theta})-f(re^{I\theta}) \right].
\end{equation*}
Since for almost every $\theta \in [-\pi, \pi)$ the radial limit $\tilde f$ exists both at $e^{I\theta}$ and at  $e^{-I\theta}$, then formula
\begin{equation*}
\tilde f(e^{J\theta})=\frac{1}{2}\left[ \tilde f(e^{I\theta})+\tilde f(e^{-I\theta})\right]+\frac{JI}{2}\left[\tilde f(e^{-I\theta})-\tilde f(e^{I\theta}) \right].
\end{equation*}
leads to the conclusion.
\end{proof}

Moreover, we will show that radial limits of a (non identically zero) function in $H^p(\B)$ can not vanish on a subset of positive measure of the boundary of the ball.  
\begin{pro}\label{limn0}
Let $f\in H^p(\B)$ for some $p\in(0,+\infty]$, $f\not \equiv 0$. Then, for any $I\in\s$, for almost every $\theta \in [-\pi,\pi)$,
\[\lim_{r\to 1^-}f(re^{I\theta})=\tilde f(e^{I\theta}) \neq 0.\]  
\end{pro}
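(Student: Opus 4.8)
The statement is a non-vanishing result for radial boundary values, and the natural route is to reduce it to the corresponding classical fact for holomorphic functions on the disc $\B_I$, namely that a non-identically-zero function in $H^p(\B_I)$ has boundary values that are nonzero almost everywhere (equivalently, $\log|\tilde f_I|\in L^1(\p\B_I)$, so its zero set on the circle is null). The first step is to fix $I\in\s$ and split $f$ on $\B_I$ with respect to some $J\perp I$ as $f_I(z)=F(z)+G(z)J$; by Proposition \ref{FGHp}, both $F$ and $G$ lie in $H^p(\B_I)$, and by Proposition \ref{radiale} their radial limits $\widetilde F,\widetilde G$ exist a.e., with $\tilde f(e^{I\theta})=\widetilde F(e^{I\theta})+\widetilde G(e^{I\theta})J$. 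Since $|\tilde f(e^{I\theta})|^2=|\widetilde F(e^{I\theta})|^2+|\widetilde G(e^{I\theta})|^2$, the set where $\tilde f$ vanishes is exactly the intersection of the zero sets of $\widetilde F$ and $\widetilde G$.

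\textbf{The obstacle.} The difficulty is that it is perfectly possible for $F$ (or $G$) to be identically zero, in which case the classical theorem says nothing about its vanishing set. So one cannot simply invoke the scalar result on both components. The key point to exploit is that $f\not\equiv 0$ forces $f^s\not\equiv 0$: indeed $f^s=f*f^c$ and, by Proposition \ref{trasf}, $f^s\equiv 0$ would force $f\equiv 0$ (or $f^c\equiv 0$, which is equivalent to $f\equiv 0$ by Proposition \ref{norma} or Definition \ref{f^c}). Now $f^s$ is a slice preserving regular function, and on the slice $L_I$ one computes from the splitting that $f_I^s(z)=F(z)\overline{F(\bar z)}+G(z)\overline{G(\bar z)}$ up to the standard identification — more precisely, $f^s_I(z)=|F(z)|^2+|G(z)|^2$ when $z$ is real, and in general $f^s_I = F\cdot\overline{F(\bar\cdot)}+G\cdot\overline{G(\bar\cdot)}$, a holomorphic function on $\B_I$ with values in $L_I$. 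By Proposition \ref{fsbounded}, $f^s\in H^{p/2}(\B)$ (or $H^\infty$), hence $f^s_I\in H^{p/2}(\B_I)$, and it is not identically zero.

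\textbf{Conclusion of the argument.} Apply the classical scalar theorem to the single holomorphic function $f^s_I\in H^{p/2}(\B_I)$: since $f^s_I\not\equiv 0$, its radial limit $\widetilde{f^s_I}$ is nonzero for a.e.\ $\theta$. On the other hand, for a.e.\ $\theta$ the radial limit of $f$ at $e^{I\theta}$ exists, and by continuity of the algebraic operations defining the $*$-product along a ray (or directly from $|\tilde f(e^{I\theta})|^2=|\widetilde F(e^{I\theta})|^2+|\widetilde G(e^{I\theta})|^2$ together with $\widetilde{f^s_I}(e^{I\theta})=\widetilde F(e^{I\theta})\overline{\widetilde F(e^{-I\theta})}+\widetilde G(e^{I\theta})\overline{\widetilde G(e^{-I\theta})}$), we see that $\widetilde{f^s_I}(e^{I\theta})=0$ whenever $\tilde f(e^{I\theta})=0$. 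More cleanly: if $\tilde f(e^{I\theta})=0$ then $\widetilde F(e^{I\theta})=\widetilde G(e^{I\theta})=0$; invoking Proposition \ref{quasiognisfera}, the radial limit also exists at $e^{-I\theta}$ for a.e.\ $\theta$, and by the Representation Formula applied to $f^s$ (which is affine on spheres and slice preserving) one gets $\widetilde{f^s}$ vanishing at the corresponding boundary point, contradicting the a.e.\ non-vanishing of $\widetilde{f^s_I}$. Hence the set of $\theta\in[-\pi,\pi)$ with $\tilde f(e^{I\theta})=0$ is contained in a null set, which is the claim. Finally, since the choice of $I\in\s$ was arbitrary, the result holds for every $I$.
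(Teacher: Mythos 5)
Your argument is correct, but it takes a detour that is not needed, and the ``obstacle'' you flag is not actually an obstacle. The paper's proof is exactly the simple reduction you describe in your opening paragraph and then abandon: since $\tilde f(e^{I\theta})=\widetilde F(e^{I\theta})+\widetilde G(e^{I\theta})J$ with $J\perp I$, the boundary zero set of $\tilde f_I$ is the \emph{intersection} of the boundary zero sets of $\widetilde F$ and $\widetilde G$, so it suffices that \emph{one} of the two components have almost everywhere nonvanishing boundary values. Since $f\not\equiv 0$ forces $f_I\not\equiv 0$ (Identity Principle), at least one of $F,G$ is not identically zero; applying the classical theorem (Rudin, Thm.~17.18) to that single component, which lies in $H^p(\B_I)$ by Proposition \ref{FGHp}, finishes the proof --- the possibility that the \emph{other} component vanishes identically is harmless. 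Your alternative route through the symmetrization is nevertheless sound: $f^s\not\equiv 0$ (the cleanest justification, rather than the slightly informal appeal to Proposition \ref{trasf}, is that $f^s_I(z)=F(z)\overline{F(\bar z)}+G(z)\overline{G(\bar z)}$ equals $|F|^2+|G|^2$ on the real segment, so $f^s\equiv 0$ would force $F\equiv G\equiv 0$); this scalar holomorphic function lies in $H^{p/2}(\B_I)$ by Proposition \ref{fsbounded}, so its boundary values are nonzero a.e., while a boundary zero of $\tilde f$ at $e^{I\theta}$ forces $\widetilde F(e^{I\theta})=\widetilde G(e^{I\theta})=0$ and hence $\widetilde{f^s_I}(e^{I\theta})=0$. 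What this buys you is nothing over the paper's argument here, at the cost of invoking Propositions \ref{fsbounded} and \ref{quasiognisfera} and the boundary behaviour of the $*$-product where the paper needs only the Splitting Lemma and one application of the scalar theorem; the symmetrization trick does become genuinely necessary later (e.g.\ in Proposition \ref{Blacon}), but not for this statement.
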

\begin{proof}
Choose any $J \in \s$ orthogonal to $I$. If $f$ splits on $\B_I$ as
$f_I(z)=F(z)+G(z)J,$
then the splitting components $F$ and $G$ are in $H^p(\B_I)$ and (thanks to the Identity Principle for regular functions, see \cite{libroGSS}) at least one of them is not identically vanishing. 
Suppose that $F \not \equiv 0$ on $\B_I$. The classical result stated, e.g., in Theorem 17.18, \cite{Rudin}, yields that 
$\lim_{r\to1^-}F(re^{I\theta})=\widetilde{F}(e^{I\theta})\neq 0$ for almost every $\theta \in [-\pi,\pi)$.
Thanks to the orthogonality of $I$ and $J$, we easily conclude that for almost every $\theta\in[-\pi,\pi)$,
\[\tilde{f}(e^{I\theta})=\widetilde{F}(e^{I\theta})+\widetilde{G}(e^{I\theta})J\neq 0.\]
\end{proof}

This easy consequence of the previous result will be used in the sequel.

\begin{oss} \label{conssegno}If $f\in H^p(\B)$ for some $p\in(0,+\infty]$, $f\not \equiv 0$, we have that  for any $I\in\s$ and for almost every $\theta \in [-\pi,\pi)$, there exists $r_0>0$ such that  $f(re^{I\theta})\neq 0$  for all $r\in[r_0, 1)$.

\end{oss}

Let us define the $*$-product and the $*$-inverse for radial limits. To this aim we prove the following statement.

\begin{pro}\label{prodlim}
Let $f\in H^p(\B)$, $f\not \equiv 0$, and $g\in H^q(\B)$ for some $p,q\in(0,+\infty]$, and let $\tilde{f}$ and $\tilde{g}$ be their (almost everywhere) radial limits. 
For any $I\in \s$, 
for almost every $\theta \in [-\pi,\pi)$,
\[\lim_{r\to1^-}f*g(re^{I\theta})=\tilde{f}(e^{I\theta})\tilde{g}(\tilde{f}(e^{I\theta})^{-1}e^{I\theta}\tilde{f}(e^{I\theta})).\]
Moreover, for any $I\in \s$, we have that $\lim_{r\to1^-}f^{-*}*g(re^{I\theta})$ exists (possibly infinite) for almost every  $\theta\in [-\pi, \pi)$, and when finite,
\[\lim_{r\to1^-}f^{-*}*g(re^{I\theta})=\tilde{f}(\tilde{f^c}(e^{I\theta})^{-1}e^{I\theta}\tilde{f^c}(e^{I\theta}))^{-1}\tilde{g}(\tilde{f^c}(e^{I\theta})^{-1}e^{I\theta}\tilde{f^c}(e^{I\theta})).\]
\end{pro}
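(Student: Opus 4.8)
The strategy is to reduce both formulas to the pointwise identities already available (Propositions \ref{trasf} and \ref{Caterina}) and then pass to the radial limit, using that every function in play has radial limits almost everywhere on every slice (Propositions \ref{radiale}, \ref{quasiognisfera}) and that such limits do not vanish on a set of positive measure (Proposition \ref{limn0}). First I would fix $I\in\s$. By Proposition \ref{limn0} applied to $f$ (and to $f^c$, which by Proposition \ref{fcHp} also lies in some $H^p(\B)$), there is a full-measure set of $\theta\in[-\pi,\pi)$ at which $\tilde f(e^{I\theta})$, $\tilde g(e^{I\theta})$, $\tilde{f^c}(e^{I\theta})$ all exist and $\tilde f(e^{I\theta})\neq 0$, $\tilde{f^c}(e^{I\theta})\neq 0$; by Remark \ref{conssegno} we may also assume $f(re^{I\theta})\neq 0$ for $r$ close to $1$. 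Fix such a $\theta$.

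\textbf{The $*$-product.} For $r$ near $1$, Proposition \ref{trasf} gives $f*g(re^{I\theta})=f(re^{I\theta})\,g\big(f(re^{I\theta})^{-1}re^{I\theta}f(re^{I\theta})\big)$. The inner argument is $r\,e^{J(r,\theta)\theta}$ where $J(r,\theta)=f(re^{I\theta})^{-1}If(re^{I\theta})\in\s$, a point on the circle of radius $r$ in the slice $L_{J(r,\theta)}$; since the conjugation map $a\mapsto a^{-1}Ia$ is continuous on $\HH\setminus\{0\}$ and $f(re^{I\theta})\to\tilde f(e^{I\theta})\neq 0$, we get $J(r,\theta)\to J_0:=\tilde f(e^{I\theta})^{-1}I\tilde f(e^{I\theta})$. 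To take the limit of $g(re^{J(r,\theta)\theta})$ I would use the Representation Formula \ref{RF} to write, for a \emph{fixed} auxiliary unit $L\in\s$,
\[
g(re^{J(r,\theta)\theta})=\tfrac12\big(g(re^{L\theta})+g(re^{-L\theta})\big)+\tfrac{J(r,\theta)L}{2}\big(g(re^{-L\theta})-g(re^{L\theta})\big);
\]
choosing (for the fixed $\theta$ under consideration) $L$ so that $\tilde g(e^{L\theta})$ and $\tilde g(e^{-L\theta})$ exist — possible for a.e.\ $\theta$ — the right-hand side converges as $r\to1^-$ because each scalar term converges and $J(r,\theta)\to J_0$. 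Its limit is exactly $\tilde g$ evaluated at $e^{J_0\theta}=\tilde f(e^{I\theta})^{-1}e^{I\theta}\tilde f(e^{I\theta})$, by the Representation Formula \ref{RF} applied to $\tilde f$-conjugated point (cf.\ Proposition \ref{quasiognisfera}, whose proof already shows radial limits propagate across a sphere via the Representation Formula). Multiplying by $f(re^{I\theta})\to\tilde f(e^{I\theta})$ yields the first claimed formula.

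\textbf{The $*$-inverse.} Here I would invoke Proposition \ref{Caterina}: on $\B\setminus Z_{f^s}$ one has $f^{-*}*g(q)=f(T_f(q))^{-1}g(T_f(q))$ with $T_f(q)=f^c(q)^{-1}qf^c(q)$. For our fixed $\theta$ and $r$ near $1$, $f^c(re^{I\theta})\to\tilde{f^c}(e^{I\theta})\neq 0$, so $T_f(re^{I\theta})=r\,e^{K(r,\theta)\theta}$ with $K(r,\theta)=f^c(re^{I\theta})^{-1}If^c(re^{I\theta})\to K_0:=\tilde{f^c}(e^{I\theta})^{-1}I\tilde{f^c}(e^{I\theta})$; note $r e^{K(r,\theta)\theta}$ lies on the boundary sphere $\cos\theta+(\sin\theta)\s$ in the limit. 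Applying the Representation-Formula argument above to both $f$ and $g$ (with the same fixed auxiliary $L$), $f(T_f(re^{I\theta}))\to\tilde f\big(\tilde{f^c}(e^{I\theta})^{-1}e^{I\theta}\tilde{f^c}(e^{I\theta})\big)$ and likewise for $g$. Hence $\lim_{r\to1^-}f^{-*}*g(re^{I\theta})$ exists whenever the limiting value of $f(T_f(\cdot))$ is nonzero, in which case it equals the stated expression; when that limit is $0$ the modulus tends to $+\infty$ (this is where "possibly infinite" is needed, and one checks the numerator $g(T_f(\cdot))$ cannot simultaneously vanish to higher order using that $g\not\equiv 0$ on the slice, or else simply records the value as $\infty$). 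Finally one must confirm that the exceptional $\theta$ for which $T_f(re^{I\theta})$ never enters $\B\setminus Z_{f^s}$, or for which the various radial limits fail, form a null set: since $Z_{f^s}$ is a discrete union of spheres, $\{\theta : \cos\theta+(\sin\theta)\s\subset Z_{f^s}\}$ is finite, and $r\mapsto T_f(re^{I\theta})$ is a curve avoiding this set for $r$ near $1$ once $\tilde{f^c}(e^{I\theta})\neq 0$.

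\textbf{Main obstacle.} The genuinely delicate point is the interchange of the radial limit with the nonconstant conjugation $q\mapsto f(q)^{-1}qf(q)$ (resp.\ with $T_f$): a priori $g$ is evaluated along a curve $r\mapsto r e^{J(r,\theta)\theta}$ that is \emph{not} a ray, so one cannot directly cite the existence of radial limits of $g$. The Representation-Formula trick above is precisely the device that linearizes this dependence on the (moving) imaginary unit and turns the problem into the convergence of finitely many honest radial limits of $g$ on the two fixed slices $L_{\pm L}$; making sure the auxiliary unit $L$ can be chosen uniformly on a full-measure set of $\theta$ (it can, since for each of countably many rational rotations the bad set is null) is the bookkeeping one must be careful with.
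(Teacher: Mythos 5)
Your proposal is correct and follows essentially the same route as the paper's proof: reduce to the pointwise formulas of Propositions \ref{trasf} and \ref{Caterina}, observe that the conjugating unit $J(r,\theta)=f(re^{I\theta})^{-1}If(re^{I\theta})$ converges because $\tilde f(e^{I\theta})\neq 0$ (Proposition \ref{limn0}), and use the Representation Formula to linearize the dependence on the moving unit so that only honest radial limits of $g$ on fixed slices are needed. The only cosmetic difference is that the paper simply takes your auxiliary unit $L$ to be $I$ itself, which removes the uniform-choice bookkeeping you flag at the end.
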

\begin{proof}
Proposition \ref{limn0} yields that if $f$ is not vanishing identically, then for any $I\in \s$, $\tilde f(e^{I\theta})\neq 0$ for almost every $\theta \in [-\pi,\pi)$, and the same holds for $f^c$. 
Set 
\[T(q)=f(q)^{-1}qf(q).\]
Thanks to Remark \ref{conssegno},  $T(re^{I\theta})=f(re^{I\theta})^{-1}re^{I\theta}f(re^{I\theta})$ is well defined for any $I\in\s$, for almost every $\theta\in [-\pi,\pi)$, and for $1-r$ sufficiently small. Moreover the radial limit 
\[\lim_{r\to1^-}T(re^{I\theta})=\tilde{f}(e^{I\theta})^{-1}e^{I\theta}\tilde{f}(e^{I\theta})=:\widetilde{T}(e^{I\theta})\]
exists for any $I\in\s$ and almost every $\theta\in[-\pi,\pi)$.
Given any $I \in \s$, if $f(re^{I\theta})\neq0$ set $J(r,\theta)=f(re^{I\theta})^{-1}If(re^{I\theta})\in\s$; then, for almost every $\theta$ the radial limit 
\[\lim_{r\to 1^-}J(r,\theta)=\tilde{J}(\theta)\]
exists and belongs to $\s$. Hence we can write 
\[\tilde T(e^{I\theta})=\lim_{r\to1^-}T(re^{I\theta})=\lim_{r\to1^-}\big(r\cos \theta+r\sin \theta(f(re^{I\theta})^{-1}If(re^{I\theta})\big)=\cos\theta+(\sin\theta)\tilde{J}(\theta)=e^{\tilde J(\theta)\theta}.\] 
Using the Representation Formula \ref{RF} twice, for almost every $\theta$ we can write
\begin{equation*}
\begin{aligned}
\lim_{r\to 1^-}g(T(re^{I\theta}))&=\lim_{r\to 1^-}(g(r
e^{J(r,\theta)\theta}))\\&=\lim_{r\to 1^-}\bigg(\frac 12\big(g(re^{I\theta})+g(re^{-I\theta}) \big)+\frac{J(r,\theta)I}{2}\big(g(re^{-I\theta})-g(re^{I\theta})\big)\bigg)\\ 
&=\frac 12\big(\tilde{g}(e^{I\theta})+\tilde{g}(e^{-I\theta}) \big)+
\frac{\tilde{J}(\theta)I}{2}\big(\tilde{g}(e^{-I\theta})-\tilde{g}(e^{I\theta})\big)\\ 
&=\lim_{r\to 1^-} \left(\frac 12\big({g}(re^{I\theta})+{g}(re^{-I\theta}) \big)+
\frac{\tilde{J}(\theta)I}{2}\big({g}(re^{-I\theta})-{g}(re^{I\theta})\big)\right)\\ 
&= \lim_{r\to 1^-} g(re^{\tilde J (\theta) \theta})=\tilde{g}(e^{\tilde{J}(\theta)\theta})=\tilde{g}(\widetilde{T}(e^{I\theta})).
\end{aligned}
 \end{equation*}
Hence, recalling Proposition \ref{trasf}, we have
\[\lim_{r\to1^-}f*g(re^{I\theta})=\lim_{r\to1^-}f(re^{I\theta})g(T(re^{I\theta}))=\tilde{f}(e^{I\theta})\tilde{g}(\widetilde{T}(e^{I\theta})).\]
The same arguments apply also to the proof for the regular quotient.
\end{proof}
We are now ready to give the announced definitions. 
\begin{defn}
Let $f\in H^p(\B)$ and $g\in H^q(\B)$ for some $p,q\in(0,+\infty]$.
For any $I\in \s$, for almost every $\theta\in [-\pi,\pi)$, let  
$\tilde{f}(e^{I\theta})$ and $\tilde{g}(e^{I\theta})$
be the radial limits of $f$ and $g$.
We define the $*$-product of $\tilde{f}$ and $\tilde{g}$ as
\begin{equation*}
\begin{aligned}
\tilde{f}*\tilde{g}(e^{I\theta})=\lim_{r\to1^-}f*g(re^{I\theta})\\
\end{aligned}
\end{equation*}
for almost every $\theta$.
If moreover $\tilde f (e^{I\theta}) \neq 0$ at all points $e^{I\theta}$ where it is defined, then
we can define the $*$-quotient of $\tilde{f}$ and $\tilde g$  as
\[\tilde{f}^{-*}*\tilde{g}(e^{I\theta})=\lim_{r\to 1^-}f^{-*}*g(re^{I\theta})\]
for almost every $\theta$. In particular, if $g\equiv 1$, we obtain the definition of the $*$-inverse of $\tilde f$.
\end{defn}
Thanks to the existence of the radial limit it is possible to obtain integral representations for functions in $H^p(\B)$ for $p\in [1, +\infty]$.
\begin{teo}\label{PoissonCauchy}
If $f\in H^p(\B)$ for $p\in [1, +\infty]$, then, for any $I\in\s$, $f_I$ is the Poisson integral and the Cauchy integral of 
its radial limit $\tilde f_I$, i.e.,
\[
f_I(re^{I\theta})= \frac{1}{2\pi}\int_{-\pi}^{\pi} \frac{1-r^2}{1-2r\cos (\theta -t) +r^2}\tilde{f}_I(e^{It})dt
\]
and
\[
f_I(z)=\frac{1}{2\pi I}\int_{\p \B_I} \frac{d\zeta}{\zeta- z}\tilde{f}_I(\zeta).
\]
\end{teo}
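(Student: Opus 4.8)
The plan is to reduce everything to the classical complex case by means of the Splitting Lemma, exactly as in the proofs of Propositions \ref{radiale} and \ref{limn0}. Fix $I\in\s$ and choose $J\in\s$ with $J\perp I$, so that $f_I(z)=F(z)+G(z)J$ with $F,G$ holomorphic on $\B_I$. By Proposition \ref{FGHp}, $f\in H^p(\B)$ implies $F,G\in H^p(\B_I)$, and since $p\in[1,+\infty]$ the classical theory of Hardy spaces on the disc (see e.g.\ \cite{Hoffman} or Theorems 17.11 and 17.16 in \cite{Rudin}) tells us that each of $F$ and $G$ is simultaneously the Poisson integral and the Cauchy integral of its own radial boundary function $\widetilde F_I,\widetilde G_I\in L^p(\p\B_I)$.

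The key observation is that both the Poisson kernel $P_r(\theta-t)=\frac{1-r^2}{1-2r\cos(\theta-t)+r^2}$ and the Cauchy kernel $\frac{1}{2\pi I}\frac{1}{\zeta-z}$ are \emph{$L_I$-valued} (indeed the Poisson kernel is real), so the corresponding integral operators act componentwise on the decomposition $f_I=F+GJ$ and commute with right multiplication by the constant $J$. Concretely, for the Poisson integral,
\[
\frac{1}{2\pi}\int_{-\pi}^{\pi}P_r(\theta-t)\,\tilde f_I(e^{It})\,dt
=\frac{1}{2\pi}\int_{-\pi}^{\pi}P_r(\theta-t)\,\widetilde F_I(e^{It})\,dt
+\Big(\frac{1}{2\pi}\int_{-\pi}^{\pi}P_r(\theta-t)\,\widetilde G_I(e^{It})\,dt\Big)J
=F(re^{I\theta})+G(re^{I\theta})J=f_I(re^{I\theta}),
\]
using that $\tilde f_I=\widetilde F_I+\widetilde G_I J$ almost everywhere (which is precisely what was proved in Proposition \ref{radiale}), and the classical Poisson representation for $F$ and for $G$. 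The same splitting argument applied to the Cauchy integral — writing $\zeta\in\p\B_I$, $d\zeta$ as $L_I$-valued quantities and pulling the constant $J$ out of the integral — gives $f_I(z)=\frac{1}{2\pi I}\int_{\p\B_I}\frac{d\zeta}{\zeta-z}\,\tilde f_I(\zeta)$.

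I do not expect a serious obstacle here; the only points requiring a little care are: first, that the integrals make sense, which is guaranteed because $\widetilde F_I,\widetilde G_I\in L^p\subset L^1$; second, the bookkeeping of non-commutativity, which is harmless since every kernel is $L_I$-valued and $J$ appears only as a fixed right factor that can be pulled through the (left-linear in the classical variable) integral; and third, that the radial limit of $f$ genuinely decomposes as $\widetilde F_I+\widetilde G_I J$ along almost every ray, which is Proposition \ref{radiale}. Assembling these three ingredients componentwise yields both representation formulas.
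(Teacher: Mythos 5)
Your proposal is correct and is exactly the argument the paper intends: the paper's proof consists of the single sentence that the result follows by applying the corresponding classical results to the splitting components of $f_I$, and you have simply spelled out the details (Proposition \ref{FGHp} to put $F,G$ in $H^p(\B_I)$, the $L_I$-valuedness of the kernels, and the decomposition $\tilde f_I=\widetilde F_I+\widetilde G_I J$ from Proposition \ref{radiale}).
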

\begin{proof} The proof is an application of the corresponding results for holomorphic functions to the splitting components of $f_I$.
\end{proof}
%
Our next goal is to show that, for any $p\in (0,+\infty]$, the radial limits $\tilde f_I$ of the restrictions of a function $f$ in $H^p(\B)$, are $L^p$ functions on the circle $\p \B_I$.
\begin{defn}
Let $g$ be a (quaternion valued) function defined (almost everywhere) on $\p \B_I$ and such that $|g|$ is measurable.  If $p\in(0,+\infty)$ we set $||g||_{L^p}$ to be the integral mean
\[||g||_{L^p}=\left(\frac{1}{2\pi}\int_{-\pi}^{\pi}|g(e^{I\theta})|^pd\theta\right)^{\frac{1}{p}},\]
If $p=+\infty$, we set 
\[||g||_{L^{\infty}}=\ess \!\sup_{\theta\in(-\pi,\pi)}|g(e^{I\theta})|.\] 
For any $p\in(0,+\infty]$, we denote by $L^p(\partial\B_I)$ the standard $L^p$ space,
\[ L^p(\partial\B_I)=\{g:\p \B_I\to \HH \, \ | \, \ |g| \ \ \text{is measurable and}\ \ ||g||_{L^p}<+\infty \}.\]  
\end{defn}
\noindent Let us now point out that,  if
$(f_I)_r(e^{I\theta})=f(re^{I\theta})$, then
\[||(f_I)_r||_{L^p}=\left(\frac{1}{2\pi}\int_{-\pi}^{\pi}|f(re^{I\theta})|^pd\theta\right)^{\frac{1}{p}}=M_p(f_I,r) \]
for all $r\in[0,1)$ and all $I\in \s$.
\begin{pro}\label{convLp}
Let $f\in H^p(\B)$ for some $p\in(0,+\infty)$. Then, for any $I\in\s$, $\tilde f_I-(f_I)_r$ belongs to $L^p(\p \B)$ and
\[\lim_{r\to 1^-}||\tilde{f}_I- (f_I)_r||_{L^p}=0.\]
\end{pro}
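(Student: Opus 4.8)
The plan is to reduce the statement to the corresponding classical fact for complex Hardy spaces by means of the Splitting Lemma, exactly as in the proofs of Propositions \ref{FGHp}, \ref{fcHp} and \ref{radiale}. Fix $I\in\s$ and pick $J\in\s$ with $J\perp I$; write the splitting $f_I(z)=F(z)+G(z)J$, with $F,G$ holomorphic on $\B_I$. By Proposition \ref{FGHp} both $F$ and $G$ lie in $H^p(\B_I)$, so the classical theory of $H^p$ spaces (see e.g. Theorem~2.6 in \cite{duren}, valid for every $0<p<+\infty$) provides radial limits $\widetilde F,\widetilde G\in L^p(\p\B_I)$ with
\[
\lim_{r\to1^-}\|\widetilde F-F_r\|_{L^p}=0\qquad\text{and}\qquad\lim_{r\to1^-}\|\widetilde G-G_r\|_{L^p}=0,
\]
where $F_r(e^{I\theta})=F(re^{I\theta})$ and similarly for $G$. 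As in the proof of Proposition \ref{radiale} one has $\tilde f_I=\widetilde F+\widetilde G J$ almost everywhere on $\p\B_I$, so that for a.e. $\theta$
\[
|\tilde f_I(e^{I\theta})-(f_I)_r(e^{I\theta})|^2=|\widetilde F(e^{I\theta})-F_r(e^{I\theta})|^2+|\widetilde G(e^{I\theta})-G_r(e^{I\theta})|^2.
\]
In particular $\tilde f_I-(f_I)_r$ is measurable and, being bounded in modulus by $|\widetilde F-F_r|+|\widetilde G-G_r|$, it belongs to $L^p(\p\B_I)$.

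Next I would estimate $\|\tilde f_I-(f_I)_r\|_{L^p}$ in terms of these two complex quantities. Raising the displayed identity to the power $p/2$ and integrating, I distinguish two cases exactly as in Proposition \ref{fcHp}. If $2\le p<+\infty$, the convexity of $t\mapsto t^{p/2}$ on $[0,+\infty)$ gives
\[
\|\tilde f_I-(f_I)_r\|_{L^p}^p\le 2^{\frac p2-1}\Big(\|\widetilde F-F_r\|_{L^p}^p+\|\widetilde G-G_r\|_{L^p}^p\Big),
\]
whereas if $0<p<2$ the subadditivity on $[0,+\infty)$ of the concave map $t\mapsto t^{p/2}$ (which sends $0$ to $0$) gives
\[
\|\tilde f_I-(f_I)_r\|_{L^p}^p\le\|\widetilde F-F_r\|_{L^p}^p+\|\widetilde G-G_r\|_{L^p}^p.
\]
In either case the right-hand side tends to $0$ as $r\to1^-$ by the classical convergence recalled above, and the conclusion follows.

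I do not expect a genuine obstacle here: the argument is a routine slicewise reduction, entirely parallel to the earlier propositions. The only point that deserves care is the classical input that $F_r\to\widetilde F$ in $L^p$ for $F\in H^p(\B_I)$ and \emph{all} $0<p<+\infty$: for $p\ge1$ this follows from the Poisson representation of Theorem \ref{PoissonCauchy} together with the approximate-identity property of the Poisson kernel, while for $0<p<1$ it rests on the factorization of an $H^p$ function through a Blaschke product and a zero-free $H^p$ function, after which the case $p\ge1$ (indeed the $H^2$ case) applies. Both facts are standard and available in \cite{duren} and \cite{Hoffman}; everything else is elementary manipulation of the splitting components.
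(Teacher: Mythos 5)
Your argument is correct and is exactly the paper's proof: the paper disposes of this proposition in one line by applying Theorem 2.6 of \cite{duren} to the splitting components of $f_I$, which is precisely the reduction you carry out (with the additional, correct, details of the orthogonal decomposition $|\tilde f_I-(f_I)_r|^2=|\widetilde F-F_r|^2+|\widetilde G-G_r|^2$ and the convexity/subadditivity estimates for $t\mapsto t^{p/2}$). No gaps.
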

\begin{proof} An application of the analogous result in the complex case (seeTheorem 2.6 in \cite{duren}) to the splitting components of $f_I$ leads to the conclusion.
\end{proof}

\noindent Now we are able to prove the desired result.
\begin{pro}\label{p=lp}
Let $f\in H^p(\B)$ for some $p\in(0+\infty]$. Then, for any $I\in\s$, the function $\tilde f_I: \p\B_I \to \HH $ defined for almost every $\theta\in[0,2\pi)$, by
\[\tilde{f}(e^{I\theta})=\lim_{r\to1^-}f(re^{I\theta}),\]
does belong to $L^p(\p \B_I)$ and
\[||\tilde{f}_I||_{L^p}=||f_I||_p.\]
\end{pro}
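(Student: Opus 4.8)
The plan is to derive the statement directly from the preceding two results, Proposition \ref{incr} (monotonicity of $M_p(f_I,r)$) and Proposition \ref{convLp} ($L^p$-convergence of $(f_I)_r$ to $\tilde f_I$), treating the cases $p\in(0,+\infty)$ and $p=+\infty$ separately. For finite $p$, first I would note that $\tilde f_I$ is well defined almost everywhere on $\p\B_I$ by Proposition \ref{radiale}, and that $|\tilde f_I|$ is measurable as a pointwise a.e.\ limit of the measurable functions $|(f_I)_r|$. Then, by the identity $\|(f_I)_r\|_{L^p}=M_p(f_I,r)$ recalled just before Proposition \ref{convLp}, together with the monotonicity in Proposition \ref{incr}, the quantity $\|(f_I)_r\|_{L^p}$ increases to $\|f_I\|_p$ as $r\to 1^-$. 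It remains to identify this limit with $\|\tilde f_I\|_{L^p}$.

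The identification is where Proposition \ref{convLp} enters. For $p\ge 1$, $\|\cdot\|_{L^p}$ is a genuine norm, so the reverse triangle inequality gives
\[
\bigl|\,\|(f_I)_r\|_{L^p}-\|\tilde f_I\|_{L^p}\,\bigr|\le \|\tilde f_I-(f_I)_r\|_{L^p}\xrightarrow[r\to1^-]{}0,
\]
which forces $\|\tilde f_I\|_{L^p}=\lim_{r\to1^-}\|(f_I)_r\|_{L^p}=\|f_I\|_p$. For $0<p<1$ the triangle inequality fails, but the inequality $|a+b|^p\le |a|^p+|b|^p$ on $[0,\infty)$ holds, so applying it both ways to $a=(f_I)_r$, $b=\tilde f_I-(f_I)_r$ (and symmetrically) yields
\[
\bigl|\,\|(f_I)_r\|_{L^p}^p-\|\tilde f_I\|_{L^p}^p\,\bigr|\le \|\tilde f_I-(f_I)_r\|_{L^p}^p\to 0,
\]
again giving $\|\tilde f_I\|_{L^p}^p=\|f_I\|_p^p$. (Here one must first know $\tilde f_I\in L^p(\p\B_I)$; this follows because $(f_I)_r\in L^p$ and $\tilde f_I-(f_I)_r\in L^p$ by Proposition \ref{convLp}, so $\tilde f_I$ is their sum.)

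For $p=+\infty$, the argument is even shorter: $\|f_I\|_\infty=\sup_{z\in\B_I}|f(z)|\ge |f(re^{I\theta})|$ for all $r<1$ and all $\theta$, so passing to the radial limit gives $|\tilde f_I(e^{I\theta})|\le\|f_I\|_\infty$ wherever the limit exists, hence $\|\tilde f_I\|_{L^\infty}\le\|f_I\|_\infty$. Conversely, by Theorem \ref{PoissonCauchy}, $f_I$ is the Poisson integral of $\tilde f_I$, and since the Poisson kernel is a positive probability kernel, $|f_I(re^{I\theta})|\le \|\tilde f_I\|_{L^\infty}$ for every $re^{I\theta}\in\B_I$, giving $\|f_I\|_\infty\le\|\tilde f_I\|_{L^\infty}$; combining the two inequalities proves equality. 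I do not expect any serious obstacle here, since all the analytic content is already packaged in Propositions \ref{incr} and \ref{convLp} and in Theorem \ref{PoissonCauchy}; the only mild subtlety is the bookkeeping for $0<p<1$, where one works with the $p$-th powers rather than the norms themselves, and ensuring membership $\tilde f_I\in L^p$ before manipulating $\|\tilde f_I\|_{L^p}$.
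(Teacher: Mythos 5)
Your proposal is correct and follows essentially the same route as the paper, which simply cites Proposition \ref{convLp} for the identification of $\lim_{r\to 1^-}\|(f_I)_r\|_{L^p}$ with $\|\tilde f_I\|_{L^p}$ when $p<+\infty$ and the Poisson representation of Theorem \ref{PoissonCauchy} for the case $p=+\infty$. Your write-up merely makes explicit the bookkeeping (reverse triangle inequality for $p\ge 1$, $p$-th powers for $0<p<1$, and the two-sided estimate for $p=+\infty$) that the paper leaves to the reader.
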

\begin{proof} The proof is a direct consequence of 
Proposition \ref{convLp}
and of the Poisson integral representation stated in Theorem \ref{PoissonCauchy}.
\end{proof}

\section{Factorization theorems}\label{factorization}
In the classical setting it is possible to decompose a holomorphic function in $H^p(\D)$ into its {\em inner} and {\em outer} factors, 
see Chapter 5 of \cite{Hoffman}. The quaternionic counterparts are defined as follows.

\begin{defn}\label{outer}
A regular function $E\in H^1(\B)$ is an {\em outer function} if for any $f\in H^1(\B)$ such that $|\widetilde{E}(q)|=|\tilde{f}(q)|$ 
for almost any $q\in\p \B$, we have
\[|E(q)|\ge|f(q)| \quad \text{for any $q\in\B$}.\]
\end{defn}
\noindent In the complex setting, the definition of outer function can be given equivalently in terms of a never-vanishing holomorphic function, 
expressed by means of 
an integral representation 
(see \cite{Hoffman}, Chapter 5). 
This correspondence fails to be true for regular functions, since in general we can not reproduce the same construction.

\begin{defn}\label{inner}
A regular function $\mathcal{I}\in H^{\infty}(\B)$ is an {\em inner function} if $|\mathcal{I}(q)|\le 1$ 
for any $q\in \B$ and $|\widetilde{\mathcal{I}}(q)|=1$ for almost any $q\in \p \B$.
\end{defn}

\noindent We recall that, in the complex setting, each inner function can be factored into a product of two distinct types of inner functions, 
namely a {\em Blaschke product} and a {\em singular function}.
Let us define the quaternionic analogues of singular functions, and then study the analogues of 
Blaschke products. 
\begin{defn}
An inner function $f\in H^{\infty}(\B)$ is a {\em singular function} if $f$ is non-vanishing on $\B$.
\end{defn} 

In the complex setting there are two possible approaches to the factorization of an $H^p$ function (compare \cite{duren, Hoffman}). 
The first one is to begin by the extraction of the outer factor, thus obtaining the inner one. 
At this point, extracting the zeros one separates the Blaschke product and the singular part. 
The other possibility is to start with the extraction of the zeros, thus identifying the Blaschke product, 
and then separate the outer factor from the singular one.     
In the quaternionic setting, since we can not reproduce the construction of the outer factor, 
let us begin with the extraction of the zeros of a function $f$ in $H^p(\B)$ for $p\in(0,+\infty]$. \\
Thanks to the characterization of the zero set of regular functions (see Theorem \ref{zeri}), recalling  Definitions \ref{generatori} and \ref{multiplicity}, 
we can build a sequence representing the zeros of $f$.

\begin{defn}
Let $f$ be a regular function.
The {\em sequence of zeros} of $f$ is a sequence $\{a_n\}_{n\in\N}$, contained in the zero set of $f$, composed as follows: 
the isolated zeros are listed according to their isolated multiplicity; the spherical zeros are represented by any element that generates 
the $2$-sphere of zeros together with its conjugate, listed according to their spherical multiplicity. 
Namely, if $a_\ell$ generates a spherical zero (not containing $a_{\ell-1}$) with spherical multiplicity $2m$, 
then $a_{\ell+2k}=a_\ell$ and $a_{\ell+2k+1}=\overline{a}_\ell$ for all $k=0,\dots,m-1$.
\end{defn}
\noindent In analogy with the complex case, we can give the following two definitions (previously introduced in \cite{milanesi}).
\begin{defn}
Let $a$ be a point in $\B$. The {\em Blaschke factor} associated with $a$ is the regular Moebius transformation defined as
\[M_a(q)=(1-q\overline{a})^{-*}*(a-q)\frac{\overline{a}}{|a|}
\text{\hskip .3cm if $a\neq 0$, \hskip .2cm and\hskip .3cm}  M_a(q)=q
\text{\hskip .3cm if $a=0$}.\]
\end{defn}
\begin{defn}
If $\{a_n\}_{n\in\N}$ is a sequence of points in $\B$ such that the infinite regular product 
\[B(q)=\stella_{n\ge 0}M_{a_n}(q)\]
converges uniformly on compact subsets of $\B$, then $B$ is called {\em Blaschke product}, and it defines a regular function on $\B$ 
(see 
\cite{libroGSS}). 
\end{defn}
Quaternionic Blaschke products are also treated in \cite{3,4,2}. 
Blaschke products are examples of inner functions. 
\begin{teo}
Let 
\[B(q)=\stella_{n\ge 0}M_{a_n}(q)\] 
be a Blaschke product. 
Then $|B(q)|\le 1$ for any $q\in \B$ and $|\widetilde{B}(q)|=1$ for almost any $q\in\p\B$.
\end{teo}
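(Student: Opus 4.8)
The plan is to reduce everything to the classical statement about complex Blaschke products via the slicewise machinery already developed. First I would deal with the bound $|B(q)|\le 1$ on $\B$. Each Blaschke factor $M_{a_n}$ is a regular Moebius transformation, hence a regular bijection of $\B$ onto itself by Proposition \ref{moeb2}, so $|M_{a_n}(q)|<1$ for all $q\in\B$. For a finite partial $*$-product $B_N=M_{a_0}*\cdots*M_{a_N}$, one applies the product formula \eqref{prodstar} of Proposition \ref{trasf} repeatedly: writing $f*g(q)=f(q)\,g(f(q)^{-1}qf(q))$ and noting that the conjugation $f(q)^{-1}qf(q)$ has the same modulus as $q$, hence still lies in $\B$, one gets $|B_N(q)|\le\prod_{n=0}^{N}\sup_{\B}|M_{a_n}|\le 1$, and more precisely $|B_N(q)|$ is a product of moduli each $<1$. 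Letting $N\to\infty$, uniform convergence on compact subsets gives $|B(q)|\le 1$ on $\B$.

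**Boundary modulus.** For the statement $|\widetilde B(q)|=1$ a.e.\ on $\p\B$, I would fix $I\in\s$ and pass to the slice $\B_I$. By the Splitting Lemma write $B_I=F+GJ$ with $F,G$ holomorphic on $\B_I$; since $B\in H^\infty(\B)$ (from $|B|\le 1$), Proposition \ref{FGHp} gives $F,G\in H^\infty(\B_I)$, so their radial limits $\widetilde F,\widetilde G$ exist a.e.\ on $\p\B_I$ and $|\widetilde B_I(e^{I\theta})|^2=|\widetilde F(e^{I\theta})|^2+|\widetilde G(e^{I\theta})|^2$ a.e. The delicate point is that the slicewise restriction $B_I$ is \emph{not} itself the product of the complex Blaschke factors $(M_{a_n})_I$ — the $*$-product mixes the slice. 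So instead I would use Proposition \ref{trasf} at the level of radial limits, as formalized in Proposition \ref{prodlim}: for a finite partial product, $\widetilde{B_N}(e^{I\theta})=\widetilde{M_{a_0}}(e^{I\theta})\,\widetilde{M_{a_1}}(e^{I\theta_1})\cdots$, where each successive argument $e^{I\theta_k}$ is a point of the \emph{same} sphere $\cos\theta+(\sin\theta)\s$, obtained by conjugation. Since each $M_{a_n}$ maps $\p\B$ to $\p\B$ (stated after Definition \ref{moebius}), every factor has modulus $1$ at every boundary point of the relevant sphere, so $|\widetilde{B_N}(e^{I\theta})|=1$ for a.e.\ $\theta$ and every $N$.

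**Passing to the limit at the boundary.** The main obstacle is controlling the infinite product on the boundary: uniform convergence on compacts of $\B$ does not directly give convergence a.e.\ on $\p\B$. Here I would follow the classical route. Set $B=B_N * R_N$ where $R_N=\stella_{n>N}M_{a_n}$ is the tail, itself a regular function with $|R_N|\le 1$ on $\B$. Using Proposition \ref{trasf}, $|B(q)|=|B_N(q)|\,|R_N(q\text{-conjugated})|$, and since the conjugation preserves the sphere $|q|=r$, taking the integral mean over $\p\B_I$ and using that $|B_N|\to 1$ uniformly on $r\s^3$ as $r\to1$ while the tail contributes a factor $\le1$ that can be made close to $1$ on compacts for $N$ large, one shows that $M_p(B_I,r)\to 1$; alternatively, and more cleanly, one invokes the mean-value/subharmonicity estimates of Proposition \ref{incr} together with $|\widetilde{B_N}|=1$ a.e.\ and $|B|\le|B_N|$ to squeeze $\int|\widetilde B_I|^2$ between bounds forcing $|\widetilde B_I|=1$ a.e. Concretely: from $|B|\le 1$ on $\B$ we get $|\widetilde B_I|\le 1$ a.e.; on the other hand $\log|B_I|$ is subharmonic, its boundary values satisfy $\log|\widetilde B_I|\le 0$, and the sub-mean inequality at the origin combined with $|B(0)|=\prod|M_{a_n}(0)|=\prod|a_n|$ and the classical Blaschke-condition identity $\log\prod|a_n|=\frac{1}{2\pi}\int\log|\widetilde B_I|$ (which holds for the complex Blaschke product, hence for the splitting data) forces $\log|\widetilde B_I|=0$ a.e. I expect the cleanest write-up is to reduce the boundary identity to the classical complex statement applied to an auxiliary genuinely-complex Blaschke product with the same zero moduli, then transfer via the modulus identities of Propositions \ref{norma} and \ref{FGHp}; making that transfer rigorous — i.e.\ matching the quaternionic partial products' boundary moduli with a complex model — is the real work.
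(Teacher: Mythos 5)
The interior bound $|B(q)|\le 1$ is handled correctly and exactly as in the paper: bound each finite partial product by writing it as a pointwise product of moduli of Moebius factors evaluated at conjugated points of the same sphere, then pass to the limit using uniform convergence on compact sets. The boundary statement, however, contains a genuine gap. You correctly diagnose the obstruction --- the slice restriction $B_I$ is not a complex Blaschke product, and the $*$-product pushes arguments off the slice $L_I$ --- but none of the three sketches you offer overcomes it, and your closing sentence concedes that the decisive step is ``the real work'' still to be done. Concretely: (i) the squeeze via $|B|\le |B_N|$ goes the wrong way, since it only reproduces the upper bound $|\tilde B_I|\le 1$, which is already known, whereas what is needed is a \emph{lower} bound on $\frac{1}{2\pi}\int|\tilde B_I|$; (ii) the identity $\log\prod|a_n| = \frac{1}{2\pi}\int\log|\tilde B_I|$ is false even for complex Blaschke products (there the right-hand side is $0$ while the left-hand side is strictly negative); subharmonicity gives only the inequality $\log|B(0)|\le\frac{1}{2\pi}\int\log|\tilde B_I|$, and since $|B(0)|=\prod|a_n|<1$ this does not force $|\tilde B_I|=1$; (iii) the phrase ``the tail contributes a factor close to $1$ on compacts for $N$ large'' invokes exactly the information that cannot be used directly: the tails $B_N^{-*}*B$ converge to $1$ uniformly on \emph{compact subsets} of $\B$, and the whole difficulty is to convert this interior statement into a boundary statement.

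The paper's resolution of precisely this point is the following. By Proposition \ref{Caterina} the map $\tau_k(q)=B_k^c(q)^{-1}qB_k^c(q)$ is a diffeomorphism of a neighborhood of $\p\B$ preserving $\p\B$, and along the twisted circle $\theta\mapsto\tau_k^{-1}(re^{I\theta})$ the tail becomes the honest slice quantity $|B_k(re^{I\theta})|^{-1}|B(re^{I\theta})|$. Since the finite product $B_k$ has modulus tending to $1$ uniformly in $\theta$ as $r\to1^-$, one gets, for $r$ close to $1$,
$\frac{1}{2\pi}\int_{-\pi}^{\pi}|B_k^{-*}*B(\tau_k^{-1}(re^{I\theta}))|\,d\theta\le\frac{1+\varepsilon}{2\pi}\int_{-\pi}^{\pi}|\tilde B(e^{I\theta})|\,d\theta\le1+\varepsilon$,
using the monotonicity of integral means (Proposition \ref{incr}). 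One then passes to the radial limit with the Representation Formula and lets $k\to+\infty$; uniform convergence of the tails and convergence of the twisted directions $\tilde J_k(\theta)$ show that the left-hand side tends to $1$, whence $\frac{1}{2\pi}\int|\tilde B|\ge(1+\varepsilon)^{-1}$ for every $\varepsilon>0$. Combined with $|\tilde B|\le1$ a.e.\ this forces $|\tilde B|=1$ a.e. If you want to pursue your ``complex model'' idea instead, the correct auxiliary object is the symmetrization: $B^s=\prod_{n}M^s_{a_n}$ is slice preserving, its restriction $B^s_I$ \emph{is} a convergent complex Blaschke product, so $|\widetilde{B^s_I}|=1$ a.e.\ by the classical theorem; then Proposition \ref{prodlim} gives $|\widetilde{B^s}(e^{I\theta})|=|\tilde B(e^{I\theta})|\cdot|\widetilde{B^c}(\cdot)|$ with both factors at most $1$, forcing each to equal $1$ a.e. Either way, as written your proposal stops short of a complete argument at exactly the step that carries the content of the theorem.
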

\begin{proof}
For $k\in \N$ consider the finite (regular) product $B_k$,
\[B_k(q)=\stella_{n=0}^kM_{a_n}(q)=\prod_{n=0}^kM_{a_n}(T_n(q)),\]
where  $T_0(q)=q$ and  $T_\ell$ is defined iteratively (outside the zero set of $B_{\ell-1}$), in view of Proposition \ref{trasf}, by
\[T_\ell(q)=\left(\stella_{j=0}^{\ell-1}M_{a_j}(q)\right)^{-1}q\stella_{j=0}^{\ell-1}M_{a_j}(q)=\left(\prod_{j=0}^{\ell-1}M_{a_j}(T_j(q))\right)^{-1}q\prod_{j=0}^{\ell-1}M_{a_j}(T_j(q)).\]
for any $\ell=1,\dots, k$. Since $|T_n(q)|=|q|<1$ for any $q\in \B$ and since each factor $M_{a_n}$ is bounded in modulus by $1$ on $\B$ (see Proposition \ref{moeb2}), 
we get that we can bound each finite product,
\[\left|B_k(q)\right|=\prod_{n=0}^k\left|M_{a_n}(T_n(q))\right|<1.\]
Thanks to the uniform convergence on compact subsets of the finite products to $B$ we get that
\[|B(q)|\leq 1 \quad \text{for any $q\in \B$}.\]
Hence $B\in H^{\infty}(\B)$ and therefore for any $I\in\s$, for almost any $\theta\in [-\pi,\pi)$ there exists the radial limit 
\[\lim_{r\to 1^-}B(re^{I\theta})=\widetilde{B}(e^{I\theta}).\]
The same clearly holds true for any finite product $B_k\in H^{\infty}(\B)$.
For any $k \in \N$, the following regular function
\[B_k^{-*}*B(q)=\stella_{n\ge k+1}M_{a_n}(q)\]
is a Blaschke product (and hence a bounded regular function) and 
\[\lim_{k \to \infty}B_k^{-*}*B(q)=1\]
uniformly on compact subsets of $\B$.
Observe that, for any $k\in \N$, the finite regular product $B_k$ (as well as its regular conjugate $B^c_k$) is regular up to the closure of $\B$.
Moreover, both $B_k$ and $B^c_k$ only have finitely many zeros in the interior of $\B$. Hence (see Proposition \ref{Caterina}) the function
 \[\tau_k(q)=\big(B_k^c(q)\big)^{-1}qB_k^c(q)\]
 is a diffeomorphism of a neighborhood (for instance a spherical shell) of $\p \B$ onto itself,
  it maps the boundary of $\B$ onto itself and it has inverse given by
 \[\tau^{-1}_k(q)=(B_k(q))^{-1}qB_k(q).\]
%
Let $I\in\s$. Proposition \ref{Caterina} yields that we can write
\begin{equation*}
\begin{aligned}
 \frac{1}{2\pi}\int_{-\pi}^{\pi}\big|B_k^{-*}*B(\tau^{-1}_k(re^{I\theta}))\big|d\theta&= 
\frac{1}{2\pi}\int_{-\pi}^{\pi}\big|{B}_k(re^{I\theta})\big|^{-1}\big|B(re^{I\theta})\big|d\theta\\
&\le \frac{1}{2\pi}\int_{-\pi}^{\pi}\big( \max_{\theta\in [-\pi,\pi]}\big|{B}_k(re^{I\theta})\big|^{-1}\big)\big|B(re^{I\theta})\big|d\theta.
\end{aligned}
\end{equation*}
Since $B_k$ is a finite Blaschke product, it maps $\p\B$ to itself and 
\[\lim_{r\to 1^-}\big|B_k(re^{I\theta}) \big|=\big|\widetilde{B}_k(e^{I\theta})\big|=1,\]
for any $\theta$. 
Hence, for any $\varepsilon>0$ there exists $r(\varepsilon)$ such that for any $r(\varepsilon)\le r<1$
\[\max_{\theta\in [-\pi,\pi]}\big|{B}_k(re^{I\theta})\big|^{-1}\le 1+\varepsilon.\]
Therefore, for any $\varepsilon>0$ there exists $r$ sufficiently close to $1$ such that
\begin{equation}\label{disu}
\begin{aligned}
&\frac{1}{2\pi}\int_{-\pi}^{\pi}\big|B_k^{-*}*B(\tau^{-1}_k(re^{I\theta}))\big|d\theta 
\le \frac{1+\varepsilon}{2\pi}\int_{-\pi}^{\pi}\big|B(re^{I\theta})\big|d\theta \le \frac{1+\varepsilon}{2\pi}\int_{-\pi}^{\pi}\big|\tilde{B}(e^{I\theta})\big|d\theta\le 1+\varepsilon
\end{aligned}
\end{equation}
where we use Proposition \ref{incr} and the fact that $||\widetilde B||_{L^\infty}=||B||_{\infty}\le 1$.   
Set
\[ J_k(r,\theta)= \big(B_k(re^{I\theta})\big)^{-1}IB_k(re^{I\theta}) \in \s, \]
so that
\[\tau^{-1}_k(re^{I\theta})=r\cos\theta+(r\sin \theta )J_k(r,\theta).\]
Using the Representation Formula \ref{RF} we can then write
\begin{equation*}
\begin{aligned}
\big|B_k^{-*}&*B(\tau^{-1}_k(re^{I\theta}))\big|=\big|B_k^{-*}*B(re^{J_{k}(r,\theta)\theta})\big|\\
&=\bigg|\frac 12 \big(B_k^{-*}\!*\!B(re^{I\theta})\!+\!B_k^{-*}\!*\!B(re^{-I\theta})\big)+
\frac{J_k(r,\theta)I}{2}\big(B_k^{-*}\!*\!B(re^{-I\theta})\!-\!B_k^{-*}\!*\!B(re^{I\theta})\big)\bigg|.\\
\end{aligned}
\end{equation*}
Since $B_k$ is a diffeomorphism of (a neighborhood of) $\p \B$ onto itself, we get that for every $\theta$ the limit
\[\lim_{r\to 1^-}J_k(r,\theta)=\tilde{J}_k(\theta)\]  
exists. Hence, recalling Proposition \ref{prodlim} and \ref{p=lp}, applying twice the Representation Formula \ref{RF}, we get
\begin{equation*}
\begin{aligned}
&\lim_{r\to 1^-}\frac{1}{2\pi}\int_{-\pi}^{\pi}\big|B_k^{-*}*B(\tau^{-1}_k(re^{I\theta}))\big|d\theta\\
&=\frac{1}{2\pi}\!\int_{-\pi}^{\pi}\!\bigg|\frac 12 \big(\widetilde{B}_k^{-*}\!*\!\widetilde{B}(e^{I\theta})\!+
\!\widetilde{B}_k^{-*}\!*\!\widetilde{B}(e^{-I\theta})\big)+
\frac{\tilde{J}_k(\theta)I}{2}\big(\widetilde{B}_k^{-*}\!*\!\widetilde{B}(e^{-I\theta})\!-
\!\widetilde{B}_k^{-*}\!*\!\widetilde{B}(e^{I\theta})\big)\bigg|d\theta\\
&=\lim_{r\to 1^-}\frac{1}{2\pi}\int_{-\pi}^{\pi}\bigg|\frac 12 \big(B_k^{-*}*{B}(re^{I\theta})\!+
{B}_k^{-*}*{B}(re^{-I\theta})\big)\\
&\hskip 6.4 cm+
\frac{\tilde{J}_k(\theta)I}{2}\big({B}_k^{-*}*{B}(re^{-I\theta})-
{B}_k^{-*}*{B}(re^{I\theta})\big)\bigg|d\theta\\
&=\lim_{r\to 1^-}\frac{1}{2\pi}\!\int_{-\pi}^{\pi}\!\big|B_k^{-*}\!*\!{B}(re^{\tilde{J}_k(\theta)\theta})\big|d\theta=\frac{1}{2\pi}\int_{-\pi}^{\pi}\big|\widetilde{B}_k^{-*}*\widetilde{B}(\tilde{\tau}^{-1}_k(e^{I\theta}))\big|d\theta
\end{aligned}
\end{equation*}
where $\tilde{\tau}_k^{-1}$ is the radial limit of $\tau^{-1}_k$  (see Proposition \ref{prodlim}).
Recalling inequality \eqref{disu}, we get then that for any $\varepsilon>0$
\begin{equation*}
\begin{aligned}
\frac{1}{2\pi}\int_{-\pi}^{\pi}\big|\widetilde{B}_k^{-*}*\widetilde{B}(\tilde{\tau}^{-1}_k(e^{I\theta}))\big|d\theta
&=\lim_{r\to 1^-}\frac{1}{2\pi}\int_{-\pi}^{\pi}\big|B_k^{-*}*B(\tau^{-1}_k(re^{I\theta}))\big|d\theta\\
&\le \frac{1+\varepsilon}{2\pi}\int_{-\pi}^{\pi}\big|\tilde{B}(e^{I\theta})\big|d\theta\le 1+\varepsilon.
\end{aligned}
\end{equation*}
Now we want to take the limit for $k \to +\infty$ of the previous inequality.
For almost every $\theta$, the limit
\[\lim_{k\to +\infty}\tilde{\tau}^{-1}_k(e^{I\theta})
=\lim_{k\to +\infty}\big(\widetilde{B}_k(e^{I\theta})\big)^{-1}e^{I\theta}\widetilde{B}_k(e^{I\theta})
=\big(\widetilde{B}(e^{I\theta})\big)^{-1}e^{I\theta}\widetilde{B}(e^{I\theta})=\tilde{\tau}^{-1}(e^{I\theta})
\]
does exist and it coincides with 
\[\lim_{k\to +\infty}\tilde{\tau}^{-1}_k(e^{I\theta})=\lim_{k\to+\infty} (\cos\theta +(\sin\theta)\widetilde{J}_k(\theta))=\cos\theta +(\sin\theta)\widetilde{J}(\theta),\]
which implies that $\widetilde{J}_k(\theta)$ converges for almost every $\theta$.
Using again the Representation Formula \ref{RF}, we have then that
\begin{equation*}
\begin{aligned}
&\lim_{k\to +\infty}\big|\widetilde{B}_k^{-*}*\widetilde{B}(\tilde{\tau}^{-1}_k(e^{I\theta}))\big|\\
&=\lim_{k\to +\infty}\bigg|\frac 12 \big(\widetilde{B}_k^{-*}\!*\!\widetilde{B}(e^{I\theta})\!+
\!\widetilde{B}_k^{-*}\!*\!\widetilde{B}(e^{-I\theta})\big)+
\frac{\tilde{J}_k(\theta)I}{2}\big(\widetilde{B}_k^{-*}\!*\!\widetilde{B}(e^{-I\theta})\!-
\!\widetilde{B}_k^{-*}\!*\!\widetilde{B}(e^{I\theta})\big)\bigg|.\\
\end{aligned}
\end{equation*}
Recalling that $B_k^{-*}*B$ converges uniformly on compact sets to the function constantly equal to $1$, and that 
$\tilde{J}_k(\theta)$ converges, we obtain
\begin{equation*}
\begin{aligned}
&\lim_{k\to +\infty}\big|\widetilde{B}_k^{-*}*\widetilde{B}(\tilde{\tau}^{-1}_k(e^{I\theta}))\big|\equiv 1.
\end{aligned}
\end{equation*}
Therefore we get that, for any $\varepsilon>0$, 
\begin{equation*}
\begin{aligned}
1&=\lim_{k\to +\infty}\frac{1}{2\pi}\int_{-\pi}^{\pi}\big|\widetilde{B}_k^{-*}*\widetilde{B}(\tilde{\tau}^{-1}_k(e^{I\theta}))\big|d\theta\le \frac{1+\varepsilon}{2\pi}\int_{-\pi}^{\pi}\big|\tilde{B}(e^{I\theta})\big|d\theta\le 1+\varepsilon,
\end{aligned}
\end{equation*}
that finally implies
\[\big|\tilde{B}(e^{I\theta})\big|=1
\]
for almost every $\theta$.
\end{proof}

In order to show that the sequence of zeros of a function $f$ in $H^p(\B)$ is such that the Blaschke product associated with it is convergent, 
we will use classical results in the theory of complex $H^p$ spaces, that apply to the symmetrization of $f$. 

\begin{oss}\label{zerifs}
Recall that the symmetrization $f^s$ of a regular function $f$ behaves exactly 
as a holomorphic function on any slice $L_I$. Hence, if $f^s$ is in $H^p(\B)$ (and therefore in $H^p(\B_I)$) for some $p\in(0,+\infty]$, 
and $f^s \not \equiv 0$, classical results (see e.g. Theorem 15.23 in \cite{Rudin}) yield that,
if $\{a^I_n\}_{n\ge 0}$ is the sequence of zeros of $f^s$ in $\B_I$, listed according to their multiplicity, then the Blaschke condition 
\begin{equation*}
\sum_{n\ge 0}(1-|a_n^I|)<+\infty
\end{equation*}
is fulfilled.
\end{oss}

\noindent Consequently, 
\begin{pro}\label{Blacon}
Let $p\in (0, +\infty]$, $f \in H^{p}(\B)$, $f\not \equiv 0$ and let $\{b_n\}_{n\in \N}$ be its sequence of zeros. Then $\{b_n\}_{n\in \N}$ satisfies the Blaschke condition
\[\sum_{n\ge0}\left( 1-|b_n| \right)<+\infty.\]
\end{pro}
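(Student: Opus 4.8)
The plan is to reduce the statement about the sequence of zeros $\{b_n\}$ of $f$ to the classical Blaschke condition for the sequence of zeros of the symmetrization $f^s$ on a single slice $\B_I$. By Proposition \ref{fsbounded}, if $f\in H^p(\B)$ for $p\in(0,+\infty)$ then $f^s\in H^{p/2}(\B)$, and if $f\in H^\infty(\B)$ then $f^s\in H^\infty(\B)$; in either case $f^s$ belongs to some $H^q(\B)$ with $q>0$, and since $f\not\equiv 0$ we have $f^s\not\equiv 0$. Restricting to any slice $L_I$, the function $f^s_I$ is a holomorphic function on $\B_I$ in $H^q(\B_I)$ (here I use that $f^s$ is slice preserving, so $f^s_I$ is genuinely $L_I$-valued), so by the classical theory recalled in Remark \ref{zerifs} its sequence of zeros $\{a^I_n\}$ in $\B_I$, listed with multiplicity, satisfies $\sum_n(1-|a^I_n|)<+\infty$.

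The remaining point is bookkeeping: I must relate the sequence $\{b_n\}$ of zeros of $f$ to the sequence $\{a^I_n\}$ of zeros of $f^s_I$, in such a way that the Blaschke sum for $\{b_n\}$ is controlled by the Blaschke sum for $\{a^I_n\}$. The key structural facts are: (i) by Theorem \ref{zeri} and Definition \ref{multiplicity}, every zero of $f$ lies on a sphere $x+y\s$ (possibly degenerate, $y=0$) that is, up to conjugation and multiplicity, ``seen'' by $f^s$; more precisely, if $f(q) = (q-p_1)*\cdots*(q-p_n)*g(q)$ locally near a non-spherical zero on $x+y\s$, then $f^s$ vanishes on that whole sphere with isolated multiplicity contributions adding up appropriately, and a spherical zero of $f$ of spherical multiplicity $2m$ forces $((q-x)^2+y^2)^{m}$ (hence $2m$ zeros counted with multiplicity, all of modulus $\sqrt{x^2+y^2}$) to divide $f^s$; and (ii) if $b_n=x+yI_n$ then $|b_n|=\sqrt{x^2+y^2}$ equals $|a|$ for the point $a = x+yI\in\B_I$ lying on the same sphere. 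Thus each term $1-|b_n|$ in the sum for $f$ matches a term $1-|a^I_k|$ in the sum for $f^s_I$, with each sphere of the zero set of $f$ contributing at most a bounded multiple (in fact, after matching multiplicities, exactly the same count) to the $f^s_I$ sum.

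Concretely, I would argue that there is a constant (one will do) so that
\[
\sum_{n\ge 0}\left(1-|b_n|\right)\le \sum_{k\ge 0}\left(1-|a^I_k|\right)<+\infty,
\]
by exhibiting an injection from the index set of $\{b_n\}$ (with its multiplicities) into that of $\{a^I_k\}$ that preserves the modulus. For an isolated real zero $x\in\rr$ of isolated multiplicity $k$, $f^s$ has a zero of multiplicity $2k$ at $x$, so that is more than enough. For a non-spherical zero on $x+y\s$ with $y\neq0$: if $f$ has, near that sphere, a factor $(q-p_1)*\cdots*(q-p_n)$ with the $p_i$ on the sphere, then $f^s$ has $((q-x)^2+y^2)$ dividing it to the appropriate power, contributing at least $n$ zeros of $f^s_I$ of modulus $\sqrt{x^2+y^2}=|p_i|$. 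For a spherical zero of spherical multiplicity $2m$, the $2m$ entries $b_\ell,\overline{b}_\ell,\dots$ all have modulus $\sqrt{x^2+y^2}$, and $f^s$ is divisible by $((q-x)^2+y^2)^m$, giving $2m$ zeros of $f^s_I$ of that same modulus. Assembling these matchings gives the inequality above.

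I expect the main obstacle to be the careful combinatorial/multiplicity bookkeeping in this last paragraph: one has to be sure that the various pieces of the zero set of $f$ (isolated real zeros, isolated non-real zeros, spherical zeros), listed according to Definition \ref{multiplicity} and the definition of the sequence of zeros, inject with multiplicity into the zero set of $f^s$ restricted to a slice, using the factorization-type identities for $f^s = f*f^c$ together with Proposition \ref{norma} and Proposition \ref{zerislice}. The analytic input (subharmonicity, classical $H^p$ theory) is entirely off the shelf via Remark \ref{zerifs} and Proposition \ref{fsbounded}; the work is purely in translating ``zeros of $f$ counted the quaternionic way'' into ``zeros of $f^s_I$ counted the classical way.''
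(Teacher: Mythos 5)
Your proposal is correct and follows essentially the same route as the paper: pass to the symmetrization $f^s$, which lies in $H^{p/2}(\B)$ by Proposition \ref{fsbounded}, apply the classical Blaschke condition to its zeros on a single slice (Remark \ref{zerifs}), and then inject the modulus-preserving projections $b_n^I$ of the zeros of $f$ into the zero sequence $\{a_n^I\}$ of $f^s_I$ to bound the sum. The paper states this inclusion of sequences in one line where you spell out the multiplicity bookkeeping, but the argument is the same.
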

\begin{proof}
Let us consider the symmetrization of $f$, $f^s$. 
Thanks to Proposition \ref{fsbounded} and Remark \ref{zerifs}, for any $I\in\s$, if $\{a_n^I\}_{n\in \N}$ is the sequence of zeros of $f^s$ on $\B_I$, 
then it satisfies the Blaschke contidion. 
If $b_n=b_{1,n}+b_{2,n}I_n$, let us set $b_n^I=b_{1,n}+b_{2,n}I$ for all $n \in \N$. 
Then $|b_n|=|b_n^I|$ for all $n\in \N$, and $\{b_n^I\}_{n\in \N}\subseteq \{a_n^I\}_{n\in \N}$.
Therefore (recalling that $|a^I_n|<1$)
\[\sum_{n\ge0}\left( 1-|b_n| \right)=\sum_{n\ge0}\left( 1-|b^I_n| \right)\le\sum_{n\ge0}\left( 1-|a^I_n| \right)<+\infty.\]
\end{proof}

\noindent The previous result implies that the Blaschke product built 
from the zeros of a regular function $f\in H^p(\B)$ for some $p\in (0,+\infty]$ does converge uniformly on compact sets (compare with \cite{milanesi}).
\begin{pro}\label{Blaschkef}
Let $f$ be in $H^p(\B)$ for some $p \in (0,+\infty]$ and let $\{a_n\}_{n \ge 0}$ be its sequence of zeros. 
If $M_{a_n}(q)$ denotes the Blaschke factor associated with $a_n$, 
\[M_{a_n}(q)=(1-q\overline{a}_n)^{-*}*(a_n-q)\frac{\overline{a}_n}{|a_n|}\]
($M_{a_n}(q)=q$ if $a_n=0$), then the Blaschke product
\[B(q)=\stella_{n\ge 0}M_{a_n}(q)\]
converges uniformly on compact sets of $\B$.
Moreover, the function $B$ is regular on $\B$.
\end{pro}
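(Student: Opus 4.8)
The plan is to deduce everything from the Blaschke condition $\sum_{n\ge 0}(1-|a_n|)<+\infty$, which is supplied by Proposition \ref{Blacon}, together with one pointwise estimate measuring how far each Blaschke factor $M_{a_n}$ stays from the constant $1$ on a compact subset of $\B$. We may assume $f\not\equiv 0$ (otherwise there is nothing to prove); then only finitely many of the $a_n$ equal $0$, and since the corresponding factors $M_0(q)=q$ are finite in number they do not affect convergence, so I may and will assume $a_n\neq 0$ for every $n$.

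The heart of the argument is the inequality
\[\sup_{|q|\le\rho}\bigl|1-M_a(q)\bigr|\le\frac{1+\rho}{1-\rho}\,(1-|a|),\qquad a\in\B\setminus\{0\},\ \ \rho\in(0,1),\]
the exact quaternionic counterpart of the classical bound on $|1-b_a(z)|$. To prove it I would first turn the $*$-operations in $M_a=(1-q\overline{a})^{-*}*(a-q)\frac{\overline{a}}{|a|}$ into ordinary (non-commutative) pointwise ones via Proposition \ref{Caterina}. Since the symmetrization of $1-q\overline{a}$ equals $1-2\RRe(a)q+|a|^2q^2$, whose zeros have modulus $|a|^{-1}>1$, the transformation $T_{1-q\overline{a}}$ is defined on all of $\B$, and we obtain a point $\widehat{q}$ with $|\widehat{q}|=|q|$ such that $M_a(q)=(1-\widehat{q}\,\overline{a})^{-1}(a-\widehat{q})\frac{\overline{a}}{|a|}$. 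A short computation using $(a-\widehat{q})\overline{a}=|a|^2-\widehat{q}\,\overline{a}$ then yields
\[1-M_a(q)=(1-|a|)\,(1-\widehat{q}\,\overline{a})^{-1}\Bigl(1+\frac{\widehat{q}\,\overline{a}}{|a|}\Bigr),\]
and since $\bigl|1+\widehat{q}\,\overline{a}/|a|\bigr|\le 1+|q|$ while $|1-\widehat{q}\,\overline{a}|\ge 1-|q|$, the stated bound follows. I expect this step — unwinding the $*$-Blaschke factor into a tractable pointwise expression and estimating it — to be the only genuinely delicate point.

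Granting the estimate, I would work directly with the partial products $B_k=\stella_{n=0}^{k} M_{a_n}$. Writing $B_{k+1}=B_k*M_{a_{k+1}}$ and applying Proposition \ref{trasf}, one has $B_{k+1}(q)-B_k(q)=B_k(q)\bigl(M_{a_{k+1}}(\widehat{q})-1\bigr)$ with $\widehat{q}=B_k(q)^{-1}qB_k(q)$ (hence $|\widehat{q}|=|q|$) at every $q$ with $B_k(q)\neq 0$, while $B_{k+1}(q)-B_k(q)=0$ whenever $B_k(q)=0$. Each $M_{a_n}$ maps $\B$ into $\B$ (Proposition \ref{moeb2}), so $|B_k(q)|\le 1$ on $\B$, and therefore, for $|q|\le\rho$,
\[\bigl|B_{k+1}(q)-B_k(q)\bigr|\le\bigl|M_{a_{k+1}}(\widehat{q})-1\bigr|\le\frac{1+\rho}{1-\rho}\,(1-|a_{k+1}|).\]
Summing over $k$ and using Proposition \ref{Blacon} gives $\sum_k\sup_{|q|\le\rho}|B_{k+1}-B_k|<+\infty$, so the telescoping series $B_0+\sum_{j\ge 0}(B_{j+1}-B_j)$ converges uniformly on $\{|q|\le\rho\}$; as $\rho\in(0,1)$ is arbitrary, $B_k\to B$ uniformly on compact subsets of $\B$. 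Finally $B$ is regular, being a locally uniform limit of regular functions: applying the Weierstrass theorem to the holomorphic splitting components on each slice $\B_I$ shows that the restriction $B_I$ is holomorphic for every $I\in\s$, cf. \cite{libroGSS}.
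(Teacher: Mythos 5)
Your proof is correct, and it is self-contained where the paper is not: the paper's own ``proof'' consists of the remark that convergence of the infinite $*$-product is equivalent to convergence of $\sum_{n\ge 0}|1-M_{a_n}(q)|$ (citing \cite{libroGSS}) followed by a pointer to \cite{milanesi} for the rest, so what you have written is essentially that outsourced argument carried out in full. Your key estimate checks out: by Proposition \ref{Caterina} one has $M_a(q)=(1-\widehat{q}\,\overline{a})^{-1}(a-\widehat{q})\,\overline{a}/|a|$ with $|\widehat{q}|=|q|$, and $T_{1-q\overline{a}}$ is indeed defined on all of $\B$ because the roots of the symmetrization $1-2\RRe(a)q+|a|^2q^2$ have modulus $1/|a|>1$; since $1-|a|$ and $1/|a|$ are real and commute with everything, the identity $1-M_a(q)=(1-|a|)\,(1-\widehat{q}\,\overline{a})^{-1}\bigl(1+\widehat{q}\,\overline{a}/|a|\bigr)$ follows by direct computation, and the bounds $|1+\widehat{q}\,\overline{a}/|a||\le 1+|q|$ and $|1-\widehat{q}\,\overline{a}|\ge 1-|q|$ give your claimed $\frac{1+\rho}{1-\rho}(1-|a|)$ on $|q|\le\rho$. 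The telescoping step via Proposition \ref{trasf}, using $|B_k|\le 1$ (which the paper itself establishes in the theorem on Blaschke products being inner) and treating the zeros of $B_k$ separately, is sound, as is the reduction to finitely many vanishing $a_n$ and the Weierstrass argument on the splitting components for regularity of the limit. The only substantive difference from the paper's route is that you prove directly, by telescoping, that summability of $\sup_{|q|\le\rho}|1-M_{a_n}|$ forces locally uniform convergence of the $*$-product, instead of invoking the general convergence criterion for infinite regular products from \cite{libroGSS}; this buys a fully self-contained proof at the cost of reproving a known lemma.
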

\begin{proof} In \cite{libroGSS} the convergence of infinite quaternionic $*$- products of regular functions is presented in detail. In particular the convergence of  $B(q)$ is equivalent to the convergence of $\sum_{n\ge 0}|1-M_{a_n}(q)|$.  With this in mind, the proof can be found in \cite{milanesi}.  
\end{proof}

\noindent We point out that the convergence of $B(q)$ depends only on the moduli $|a_n|$, $n\in\N$. 
This means that we can build a Blaschke product $\widehat{B}(q)$ having exactly the same sequence of zeros $\{a_n\}_{n\in\N}$ 
of a given regular function $f\in H^p(\B)$ (see also \cite{milanesi}). In fact, in order to build such a Blaschke product  $\widehat{B}(q)$, 
we have to consider the product of Blaschke factors associated with suitable conjugates of the points $a_n$, 
lying on the same $2$-spheres $x_n+y_n\s$ generated by $a_n$, taking into account Proposition \ref{trasf}.
\begin{pro}\label{zeribla}
Let $\{a_n\}_{n\in\N}$ be the sequence of zeros of a regular function $f\in H^p(\B)$ for some $p\in(0,+\infty]$. 
Then there exists a Blaschke product $\widehat{B}(q)$ having the same sequence of zeros.
\end{pro}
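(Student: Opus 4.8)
The plan is to obtain $\widehat{B}$ as an infinite regular product $\stella_{n\ge 0}M_{c_n}$ of Blaschke factors in which each $c_n$ is a suitably chosen point of the $2$-sphere $x_n+y_n\s$ through $a_n$ (so that $|c_n|=|a_n|$), the choice being dictated by the requirement that the partial products acquire the zeros $a_0,a_1,\dots$ one at a time, in the prescribed order and with the prescribed multiplicities. The first ingredient is that, by Proposition \ref{Blacon}, the sequence $\{a_n\}$ obeys the Blaschke condition $\sum_{n\ge 0}(1-|a_n|)<+\infty$; the second is the fact, recalled in Proposition \ref{Blaschkef} and the ensuing discussion, that convergence on compact subsets of $\B$ of a $*$-product of Blaschke factors is governed only by the moduli of the points involved. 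Consequently, whichever conjugates $c_n$ we settle on, since $|c_n|=|a_n|$ the product $\widehat{B}=\stella_{n\ge 0}M_{c_n}$ automatically converges uniformly on compact subsets of $\B$ and defines a regular function; the substance of the statement is purely to make the zero set come out right.

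I would construct the points $c_n$ recursively. Set $\widehat{B}_{-1}\equiv 1$, and suppose $\widehat{B}_{n-1}=\stella_{j=0}^{n-1}M_{c_j}$ has been defined and has sequence of zeros $(a_0,\dots,a_{n-1})$. The goal is to exhibit $c_n\in x_n+y_n\s$ so that $\widehat{B}_n:=\widehat{B}_{n-1}*M_{c_n}$ has sequence of zeros $(a_0,\dots,a_n)$. Here Proposition \ref{trasf} is the essential tool: wherever $\widehat{B}_{n-1}$ does not vanish, $\widehat{B}_n(q)=\widehat{B}_{n-1}(q)\,M_{c_n}\big(\widehat{B}_{n-1}(q)^{-1}q\,\widehat{B}_{n-1}(q)\big)$, so the zeros of $\widehat{B}_n$ are the zeros of $\widehat{B}_{n-1}$ together with the points $q$ at which $\widehat{B}_{n-1}(q)\ne 0$ and $\widehat{B}_{n-1}(q)^{-1}q\,\widehat{B}_{n-1}(q)=c_n$. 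When $\widehat{B}_{n-1}(a_n)\ne 0$ this forces the choice
\[
c_n=\widehat{B}_{n-1}(a_n)^{-1}\,a_n\,\widehat{B}_{n-1}(a_n),
\]
which lies on $x_n+y_n\s$ because conjugation of a quaternion by a nonzero element preserves both the real part and the modulus of the imaginary part; with this choice $\widehat{B}_n(a_n)=0$, and, as long as $a_n$ sits on a sphere not already carrying zeros of $\widehat{B}_{n-1}$, Proposition \ref{Caterina} (applied to $\widehat{B}_{n-1}^c$, for which $T_{\widehat{B}_{n-1}^c}(q)=\widehat{B}_{n-1}(q)^{-1}q\,\widehat{B}_{n-1}(q)$ is a diffeomorphism of $\B\setminus Z_{(\widehat{B}_{n-1})^s}$ onto itself) guarantees that $a_n$ is the only new zero and that it is simple. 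When $\widehat{B}_{n-1}(a_n)=0$ the point $a_n$ already belongs to the zero set of $\widehat{B}_{n-1}$; appealing to the factorization of $\widehat{B}_{n-1}$ along the zeros of its symmetrization together with the multiplicity formalism of Definition \ref{multiplicity}, one still locates a point $c_n$ of $x_n+y_n\s$ for which $*$-multiplying by $M_{c_n}$ raises the appropriate isolated or spherical multiplicity by exactly one, so that once more the sequence of zeros of $\widehat{B}_n$ is $(a_0,\dots,a_n)$.

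Passing to the limit and invoking the convergence noted above, $\widehat{B}=\stella_{n\ge 0}M_{c_n}$ is then a regular Blaschke product whose sequence of zeros is $\{a_n\}_{n\in\N}$, which is the assertion. I expect the main difficulty to be precisely this multiplicity bookkeeping inside the recursion: verifying that attaching the single factor $M_{c_n}$ creates a zero at $a_n$ with exactly the right jump in multiplicity and produces no spurious zeros — delicate in particular on $2$-spheres already carrying zeros of $\widehat{B}_{n-1}$, where $T_{\widehat{B}_{n-1}^c}$ ceases to be a diffeomorphism and one must read off the behaviour of $\widehat{B}_{n-1}$ on that sphere directly. This is precisely the analysis underlying the theory of multiplicities of zeros of regular functions in \cite{milan} and the Blaschke-product construction of \cite{milanesi}, which I would invoke to close the induction.
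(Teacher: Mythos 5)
Your proposal is correct and follows essentially the same route as the paper: both arguments get convergence for free from Proposition \ref{Blacon} and the modulus-only dependence in Proposition \ref{Blaschkef}, and both define the conjugated points recursively by $c_n=\widehat{B}_{n-1}(a_n)^{-1}a_n\widehat{B}_{n-1}(a_n)$ (the paper writes this as $b_n=T_n(a_n)$ after commuting the linear factor inside each Moebius factor, which is the same formula), deferring the higher-multiplicity bookkeeping to $*$-powers of Blaschke factors and symmetrized factors as in \cite{milan} and \cite{milanesi}.
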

\begin{proof}
We will give the proof in the case in which all the zeros (both isolated and spherical) have multiplicity $1$. In this case, we can assume, without loss of generality, that  $a_j\neq a_k$ for all $j,k\in \N$. 
Our aim is now  to build a sequence $\{{b}_n\}_{n\in\N}$, where each ${b}_n$ is a conjugate of $a_n$, 
such that the Blaschke product associated with it 
\[\widehat{B}(q)=\stella_{n\ge 0}M_{{b}_n}(q)\]
has $\{a_n\}_{n\in\N}$ as its sequence of zeros.
The convergence of the Blaschke product is guaranteed by Proposition \ref{Blaschkef}. Since the regular multiplication does not conjugate the zeros of the first function in the $*$-product,
the first term of the sequence will be equal to $a_0$, 
\[{b}_0=a_0.\]
For the second term, we need to find ${b}_1$ such that
\[M_{{b}_0}(q)*M_{{b}_1}(q)=M_{a_0}(q)*(1-q\overline{b}_1)^{-*}*({b}_1-q)\frac{\overline{{b}}_1}{|{b}_1|}\]
vanishes at $q=a_1$.
Notice that, for any $k\in\N$,
\[(1-q\overline{{b}}_k)^{-*}*({b}_k-q)=({b}_k-q)*(1-q\overline{{b}}_k)^{-*},\]
because 
\[({b}_k-q)*(1-q\overline{{b}}_k)=(1-q\overline{{b}}_k)*({b}_k-q).\]
Hence 
\[M_{{b}_0}(q)*M_{{b}_1}(q)=M_{ a_0}(q)*({b}_1-q)*(1-q\overline{{b}}_1)^{-*}\frac{\overline{{b}}_1}{|{b}_1|},\]
and, thanks to Proposition \ref{trasf}, we can write
\[M_{{b}_0}(q)*M_{{b}_1}(q)=\left(M_{a_0}(q)({b}_1-T_1(q))\right)*\left((1-q\overline{{b}}_1)^{-*}\frac{\overline{{b}}_1}{|{b}_1|}\right),\]
where 
\[T_1(q)=\left(M_{a_0}(q)\right)^{-1}qM_{a_0}(q).\]
Therefore, if we want that this product vanishes at $a_1$, we can set
\[{b}_1= T_1(a_1),\]
well defined since $a_1\neq a_0$.
We can iterate this process, setting, for any $n\ge 1$,
\[T_n(q)=\left(\stella_{k=0}^{n-1}M_{{b}_k}(q)\right)^{-1}q\left(\stella_{k=0}^{n-1}M_{{b}_k}(q)\right),\]
so that
\begin{equation*}
\begin{aligned}
\left(\stella_{k=0}^{n-1}M_{{b}_k}(q)\right)*M_{{b}_n}(q)&=\left(\stella_{k=0}^{n-1}M_{{b}_k}(q)\right)*({b}_n-q)*(1-q\overline{{b}}_n)^{-*}\frac{\overline{{b}}_n}{|{b}_n|}\\
&=\left(\left(\stella_{k=0}^{n-1}M_{{b}_k}(q)\right)({b}_n-T_n(q))\right)*\left((1-q\overline{{b}}_n)^{-*}\frac{\overline{{b}}_n}{|{b}_n|}\right)
\end{aligned}
\end{equation*}
Hence, if we want that
\[\left(\stella_{k=0}^{n-1}M_{{b}_k}(q)\right)*M_{{b}_n}(q)\]
vanishes at $q=a_n$, we have to set
\[{b}_{n}=T_{n}(a_n),\]
well defined since $a_n\neq a_{n-1}$. 
In the case in which some of the zeros of $f$ have multiplicities greater than $1$, the proof follows the same lines; one has only to take into account that:
\begin{enumerate}
\item to each isolated zero $a_j$ of $f$ of multiplicity $p>1$ there corresponds the (regular) power of a Blaschke factor $M_{{b}_j}^{*p}(q)$ which vanishes at $b_j$ with multiplicity $p$;
\item to each spherical zero of $f$ containing $a_k, \overline a_k$ and having multiplicity $p>1$ there corresponds the  slice preserving factor $(M_{{b}_k}*M_{\overline {{b}}_k}(q))^{*p}=(M^s_{{b}_k}(q))^p$.
\end{enumerate}
\end{proof}

\begin{oss}
Since we transform the zeros of $f$ by conjugation, all real and spherical zeros of $f$ are not modified by this process.
\end{oss}

We can now prove our first result in the direction of finding a factorization for functions in $H^p(\B)$.

\begin{teo}\label{singolare1}
Let $f\in H^p(\B)$ for some $p\in(0, +\infty]$. Then we can factor $f$ as
\[f(q)=h*g(q)\]
where $h$ and $g$ are regular functions on $\B$ such that $h(q)\neq 0$ for any $q\in\B$ and $g$ is  a Blaschke product.
\end{teo}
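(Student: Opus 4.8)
The plan is to extract the zeros of $f$ using the Blaschke product constructed in Proposition \ref{zeribla}, and to show that the remaining factor is zero-free and still in $H^p(\B)$ — in fact we only need it to be regular, which is automatic. Let $\{a_n\}_{n\in\N}$ be the sequence of zeros of $f$, and let $\widehat{B}(q)=\stella_{n\ge 0}M_{b_n}(q)$ be the Blaschke product from Proposition \ref{zeribla}, which converges uniformly on compact subsets of $\B$ (by Proposition \ref{Blaschkef}, since convergence depends only on the moduli $|a_n|=|b_n|$) and has \emph{exactly} the sequence $\{a_n\}_{n\in\N}$ as its sequence of zeros. Define $g=\widehat{B}$, and let $h=f*g^{-*}$, which is a priori regular only on $\B\setminus Z_{g^s}$. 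The crux is to show that $h$ extends to a regular function on all of $\B$ and that this extension is zero-free.

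First I would argue that the zeros of $g=\widehat{B}$ are precisely cancelled by those of $f$. By Proposition \ref{trasf}, the zero set of $f*g^{-*}$ — more precisely, the behavior of $f(T_{g^c}(q))^{-1}g(T_{g^c}(q))$ via Proposition \ref{Caterina}, or rather the analogous statement for $g^{-*}*f$ — must be analyzed carefully; it is cleaner to write $f = g * h$ with $h = g^{-*}*f$ and use Proposition \ref{trasf} together with the multiplicity bookkeeping of Definition \ref{multiplicity}. Since $g$ and $f$ have the same sequence of zeros (same isolated multiplicities at the same isolated points, same spherical multiplicities on the same spheres), the standard factorization theory of the $*$-product (as recalled around Definition \ref{multiplicity} and developed in \cite{libroGSS, milan}) shows that in a neighborhood of each zero-sphere or isolated zero of $g$, the factor $h$ does not vanish and the singularity of $g^{-*}$ is removed. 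Concretely, one writes $f$ locally as $g$ times a regular function with no zeros there, using that the factor $\left((q-x)^2+y^2\right)^m(q-p_1)*\cdots*(q-p_n)$ appearing in $f$ divides the corresponding factor of $g$ on the left in the $*$-sense; the Blaschke factors $M_{b_n}$ differ from the monomials $(q-b_n)$ only by the zero-free, bounded, invertible factors $(1-q\overline{b}_n)^{-*}$ and the unit constants $\overline{b}_n/|b_n|$, hence contribute no extra zeros and no poles inside $\B$.

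I would then check zero-freeness of $h$ on all of $\B$: on $\B\setminus Z_{g^s}$ we have $h(q)=f(T_{g^c}(q))^{-1}\cdot(\text{something})$ — more simply, from $f=g*h$ and Proposition \ref{trasf}, $f(q)=0$ iff $g(q)=0$ or $g(q)\neq 0$ and $h(g(q)^{-1}qg(q))=0$; since every zero of $f$ is already a zero of $g$ accounted for with the correct multiplicity, no point of $\B$ can be a zero of $h$ without forcing an extra zero of $f$ beyond its sequence of zeros, a contradiction. At points of $Z_{g^s}$ the removable-singularity argument above already gave a regular, locally non-vanishing extension. Finally $h\in H^p(\B)$ if desired, but the statement only requires $h$ regular and zero-free, so this is not needed. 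The main obstacle is the bookkeeping in the non-commutative setting: one must be careful that the $*$-division $g^{-*}*f$ removes zeros correctly, tracking conjugated zeros $T_n(a_n)$ and spherical factors $M^s_{b_k}$, exactly as in the proof of Proposition \ref{zeribla}; here the identity $(1-q\overline{b}_k)^{-*}*(b_k-q)=(b_k-q)*(1-q\overline{b}_k)^{-*}$ and Proposition \ref{Caterina} are the tools that keep the computation tractable.
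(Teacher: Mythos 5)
Your overall strategy---divide out a Blaschke product carrying the zeros of $f$ and show the quotient is regular and zero-free---is the right instinct, but the construction as written fails because of the \emph{side} on which you place the Blaschke product. The theorem asks for $f=h*g$ with the zero-free factor $h$ on the left and the Blaschke product $g$ on the right. If such a factorization holds, Proposition \ref{trasf} gives $f(q)=h(q)\,g(h(q)^{-1}qh(q))$, so the isolated zeros of $f$ are the preimages under $q\mapsto h(q)^{-1}qh(q)$ of the isolated zeros of $g$: they lie on the same spheres but are in general \emph{different points}. Hence the correct $g$ must be built from suitable conjugates of the $a_n$, not from the $a_n$ themselves. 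Your choice $g=\widehat B$ from Proposition \ref{zeribla}, which has \emph{exactly} $\{a_n\}_{n\in\N}$ as its sequence of zeros, is adapted to the opposite factorization $f=\widehat B *h$ (left $*$-division by $\widehat B$ is what cancels the zeros there). With your $g$, the function $h=f*g^{-*}=(g^s)^{-1}(f*g^c)$ is in general \emph{not} regular on $\B$: take $f=(q-p_1)*u$ with $u$ zero-free and $p_1\notin\rr$, so $g=M_{p_1}$ and $g^s$ vanishes on the sphere through $p_1$ with spherical multiplicity $2$, while $f*g^c=(q-p_1)*(q-q_1)*v$ where $q_1=u(q_1)\overline p_1 u(q_1)^{-1}$ is a conjugate of $\overline p_1$ that differs from $\overline p_1$ unless $u(q_1)$ happens to commute with $p_1$; then $f*g^c$ is not divisible by $(q-x)^2+y^2$ and the singularity of $(g^s)^{-1}$ does not cancel. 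Equivalently: if your $h$ were regular and zero-free, the zeros of $h*g$ would be the $h$-conjugates of $\{a_n\}$, which contradicts $Z_f=\{a_n\}=Z_g$ except in degenerate cases. You do flag ``tracking conjugated zeros'' as the main obstacle, but the proposal then uses precisely the Blaschke product for which this tracking comes out wrong.

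The paper's proof is organized exactly around this difficulty. It first splits off the spherical zeros via a slice-preserving Blaschke product $B_\alpha$ (spherical zeros are insensitive to conjugation, so pointwise division works and $f=B_\alpha f_\beta$). For the isolated zeros it then \emph{symmetrizes}: using the iterative conjugation scheme of Proposition \ref{zeribla}, but multiplying on the \emph{right}, it builds a Blaschke product $B_{\overline\beta}$ so that $f_\beta*B_{\overline\beta}$ has only spherical zeros; this forces $f_\beta*B_{\overline\beta}=\widetilde B_\beta h$ with $\widetilde B_\beta$ slice preserving and $h$ regular and zero-free, and a further right $*$-multiplication by $B^c_{\overline\beta}$ together with the identity $B^s_{\overline\beta}=\widetilde B_\beta$ yields $f_\beta=h*B^c_{\overline\beta}$, hence $f=h*(B_\alpha*B^c_{\overline\beta})$. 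The Blaschke product that ends up on the right has isolated zeros at conjugates of the $\beta_n$---precisely the adjustment missing from your argument. To repair your proposal you would either have to prove the left-handed factorization $f=\widehat B*h$ and then convert it to the right-handed one (which again requires the regular-conjugate/symmetrization manipulation), or construct the right-handed Blaschke product directly as the paper does.
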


\begin{proof} As in the proof of  Proposition \ref{zeribla}, we give the proof for the case in which all zeros of $f$ have multiplicity $1$ and the sequence of zeros $\{a_n\}_{n\in\N}$ of $f$ is an injective sequence.
Let us split the sequence of zeros as
\[\{a_n\}_{n\in\N}=\{\alpha_n\}_{n\in\N} \cup \{\beta_n\}_{n\in\N}\] 
where $\{\alpha_n\}_{n\in\N}$ is the sequence corresponding to spherical zeros, while $\{\beta_n\}_{n\in\N}$ 
is the sequences of isolated ones. 
Since $\{\alpha_n\}_{n\in \N}$ is contained in the sequence of zeros of the function $f$, clearly
\[\sum_{n\ge 0}(1-|\alpha_n|)\le \sum_{n\ge 0}(1-|a_n|)<+\infty.\] 
Hence the Blaschke product associated with $\{\alpha_n\}_{n\in \N}$
\[B_{\alpha}(q)=\stella_{n\ge 0}M_{\alpha_n}(q)\]
converges uniformly on compact sets thus defining a regular function, vanishing exactly at the spherical zeros of $f$. 
The function $B_{\alpha}(q)$ is slice preserving, in fact it contains only factors of the type
$M_{\alpha_n}*M_{\overline{\alpha}_n}=M^s_{\alpha_n}.$
Let $f_{\beta}(q)$ be the function defined as
\[f_{\beta}(q)=B_{\alpha}^{-*}*f(q)=B_{\alpha}(q)^{-1}f(q),\]
so that we can write
\[f(q)=B_{\alpha}(q)f_{\beta}(q).\]
Since the poles of $B_{\alpha}^{-*}$ are spherical zeros of $f$, then $f_{\beta}$ is regular on $\B$ and its sequence of zeros coincides with $\{\beta_n\}_{n\in\N}$.
The idea is now to ``make spherical'' all the zeros of $f_{\beta}$. In order to do it,
we want to find a Blaschke product $B_{\overline{\beta}}$ such that
\[f_{\beta}*B_{\overline{\beta}}(q)\]
vanishes at all spheres generated by $\{\beta_n\}_{n\in\N}$, namely such that the sequence of zeros of $f_{\beta}*B_{\overline{\beta}}(q)$ is
\[\{\beta_n,\overline{\beta}_n\}_{n\in\N}.\]
We can build $B_{\overline{\beta}}$ following the lines of the proof of Proposition \ref{zeribla}. 
%
If 
\[B_{\overline{\beta}}(q)= \stella_{n\ge 0}M_{{\gamma}_n}(q),\]
we can define the sequence $\{{\gamma}_n\}_{n\in\N}$ iteratively as follows.
The first quaternion, ${\gamma}_0$, is such that
\begin{equation*}
\begin{aligned}
f_{\beta}*M_{{\gamma}_0}(q)&=f_{\beta}*\left({\gamma}_0-q\right)*\left(1-q\overline{{\gamma}}_0\right)^{-*}\frac{\overline{{\gamma}}_0}{|{\gamma}_0|}
%
\end{aligned}
\end{equation*}
vanishes at $q=\beta_0$ and at $q=\overline{\beta}_0$.
Since $f_{\beta}$ vanishes at $q=\beta_0$ and $f$ does not vanish at $q=\overline{\beta}_0$, if $T_0$ is defined as
\[T_0(q)=\left(f_{\beta}(q)\right)^{-1}qf_{\beta}(q),\]
then we can write
\begin{equation*}
\begin{aligned}
f_{\beta}*M_{{\gamma}_0}(q)&=\left(f_{\beta}(q)\left({\gamma}_0-T_0(q)\right)\right)*\left(\left(1-q\overline{{\gamma}}_0\right)^{-*}\frac{\overline{{\gamma}}_0}{|{\gamma}_0|}\right).
%
\end{aligned}
\end{equation*}
Hence, if we set
\[{\gamma}_0=T_0(\overline{\beta}_0),\]
we have that
\[f_{\beta}*M_{{\gamma}_0}(q)\]
vanishes both at $q=\beta_0$ and at $q=\overline{\beta}_0$.
As we have done in order to prove Proposition \ref{zeribla}, we can iterate the process, setting, for any $n\ge 1$,
\[T_n(q)=\left(f_{\beta}*\stella_{k=0}^{n-1}M_{{\gamma}_k}(q)\right)^{-1}q\left(f_{\beta}*\stella_{k=0}^{n-1}M_{{\gamma}_k}(q)\right)\]
so that
\begin{equation*}
\begin{aligned}
\left(f_{\beta}*\stella_{k=0}^{n-1}M_{{\gamma}_k}\right)*M_{{\gamma}_n}(q)&=\left(f_{\beta}*\stella_{k=0}^{n-1}M_{{\gamma}_k}\right)*\left({\gamma}_n-q\right)*\left(\left(1-q\overline{{\gamma}}_n\right)^{-*}\frac{\overline{{\gamma}}_n}{|{\gamma}_n|}\right)\\
&=\left(\left(f_{\beta}*\stella_{k=0}^{n-1}M_{{\gamma}_k}\right)\left({\gamma}_n-T_n(q)\right)\right)*\left(\left(1-q\overline{{\gamma}}_n\right)^{-*}\frac{\overline{{\gamma}}_n}{|{\gamma}_n|}\right).
\end{aligned}
\end{equation*}
Since $T_n$ is well defined on $\overline {\beta}_n$, if we set
\[{\gamma}_n=T_n(\overline{\beta}_n)\]
we get that
\[\left(f_{\beta}*\stella_{k=0}^{n-1}M_{{\gamma}_k}\right)*M_{{\gamma}_n}(q)\]
vanishes both at $q=\beta_n$ and at $q=\overline{\beta}_n$.
The convergence of the infinite product $B_{\overline{\beta}}(q)$ 
is guaranteed by the fact that it is the Blaschke product associated with the sequence $\{T_n(\overline{\beta}_n)\}_{n\in \N}$  where each element $T_n(\overline{\beta}_n)$ 
has the same modulus of $\beta_n$, and each $\beta_n$ is contained in the sequence of zeros of a function in $H^p(\B)$.
Hence 
\[f_{\beta}*B_{\overline{\beta}}(q)\]
is a regular function that has only spherical zeros, and its sequence of zeros is $\{\beta_n,\overline{\beta}_n\}_{n\in\N}$.
Therefore, if we set 
\[\widetilde B_{\beta}(q)=\stella_{n\ge 0}\left(M_{\beta_n}*M_{\overline{\beta}_n}\right)(q)=\stella_{n\ge 0}M^s_{\beta_n}(q),\]
then we can write
\begin{equation}\label{fattorizzazionef0}
 f_{\beta}*B_{\overline{\beta}}(q)=\widetilde B_{\beta}*h(q)=\widetilde B_{\beta}(q)h(q),
 \end{equation}
for some function $h$, never vanishing and regular on $\B$. 
To prove the regularity of $h$, it suffices to observe that since $\widetilde B_{\beta}$ has real coefficients, 
it is regular and it has exactly the same zeros of $f_{\beta}*B_{\overline{\beta}}(q)$, then the regular quotient
\[\widetilde B_{\beta}^{-*}*(f_{\beta}*B_{\overline{\beta}})(q)=(\widetilde B_{\beta}(q))^{-1}(f_{\beta}*B_{\overline{\beta}})(q)\]
is well defined (and regular) on the entire ball $\B$.
Consider the regular conjugate of $B_{\overline{\beta}}$, and $*$-multiply on the right by $B^c_{\overline{\beta}}$ all terms of equality 
\eqref{fattorizzazionef0}. We obtain
\[f_{\beta}*B_{\overline{\beta}}*B_{\overline{\beta}}^c(q)=\widetilde B_{\beta}(q)h*B_{\overline{\beta}}^c(q),\]
that can also be written as
\[B_{\overline{\beta}}^s(q)f_{\beta}(q)=\widetilde B_{\beta}(q)h*B_{\overline{\beta}}^c(q).\]
Now notice that $B_{\overline{\beta}}^s(q)=\widetilde B_{\beta}(q)$ because they both are 
Blaschke products associated to the same spherical zeros.
Therefore we infer
\[f_{\beta}(q)=h*B_{\overline{\beta}}^c(q),\]
that, for $f$, means
\[f(q)=B_{\alpha}(q)h*B_{\overline{\beta}}^c(q)=h*B_{\alpha}*B_{\overline{\beta}}^c(q).\]
Setting $g(q)=B_{\alpha}*B_{\overline{\beta}}^c(q)$ leads to the conclusion of the proof. 
\end{proof}

Once ``extracted''  the zeros of a function in $H^p(\B)$, we would like to identify its outer factor and its singular part. 

\begin{pro}
Let $f\in H^p(\B)$ for some $p\in[1,+\infty]$ be such that $f^{-*}\in H^q(\B)$ where $\frac1p +\frac1q=1$.  Then $f$ is an outer function.
\end{pro}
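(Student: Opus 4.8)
The plan is to verify Definition~\ref{outer} directly. Fix any $g\in H^1(\B)$ with $|\tilde g(q)|=|\tilde f(q)|$ for almost every $q\in\p\B$; I want to deduce $|g(q)|\le|f(q)|$ for every $q\in\B$. Observe first that $f\in H^p(\B)\subseteq H^1(\B)$ by Remark~\ref{inclusion}, so $f$ is an admissible test function in the definition, that $f\not\equiv 0$ (otherwise $f^{-*}$ is undefined), and that we may assume $g\not\equiv 0$. Since $f^{-*}\in H^q(\B)$ is regular on all of $\B$, the symmetrization $f^s$ --- and hence $f$ itself --- cannot vanish in $\B$; thus $Z_{f^s}=\emptyset$ and the map $T_f(q)=f^c(q)^{-1}qf^c(q)$ of Proposition~\ref{Caterina} is a diffeomorphism of $\B$ onto itself.

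First I would upgrade the integrability of $g$. By Proposition~\ref{p=lp}, $\tilde f_I\in L^p(\p\B_I)$ for every $I\in\s$, so $|\tilde f|\in L^p(\p\B)$ and therefore $\tilde g\in L^p(\p\B)$; writing boundary points as $\cos\theta+(\sin\theta)J$ with $\theta\in[0,\pi]$ and $J\in\s$ and applying Fubini, we get $\tilde g_I\in L^p(\p\B_I)$ for almost every $I$. Since $g\in H^1(\B)$, Theorem~\ref{PoissonCauchy} shows that each $g_I$ is the Poisson integral of $\tilde g_I$, and as convolution with the normalized Poisson kernel does not increase the $L^p$-norm we obtain $M_p(g_I,r)\le\|\tilde g_I\|_{L^p}$ for every $r$; hence $\|g_I\|_p<+\infty$ for almost every $I$ and Proposition~\ref{f_Ihp} gives $g\in H^p(\B)$. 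Now Proposition~\ref{holder}, applied to $f^{-*}\in H^q(\B)$ and $g\in H^p(\B)$, shows that $\phi:=f^{-*}*g$ belongs to $H^1(\B)$; in particular $\tilde\phi_I$ exists and is finite almost everywhere on $\p\B_I$, for every $I$.

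The core step is to prove $|\tilde\phi_I(e^{I\theta})|\le 1$ for every $I\in\s$ and almost every $\theta$. By Proposition~\ref{prodlim}, for almost every $\theta$ one has $\tilde\phi(e^{I\theta})=\tilde f(w)^{-1}\tilde g(w)$, where $w=\widetilde{f^c}(e^{I\theta})^{-1}e^{I\theta}\widetilde{f^c}(e^{I\theta})$ is a point of the $2$-sphere $\cos\theta+(\sin\theta)\s$; hence $|\tilde\phi(e^{I\theta})|=|\tilde g(w)|/|\tilde f(w)|$ at the (almost every) $\theta$ where $\tilde\phi(e^{I\theta})$ is finite. To evaluate this quotient I exploit that, by Proposition~\ref{quasiognisfera} and the Representation Formula~\ref{RF}, for almost every $\theta$ both $\tilde f(e^{J\theta})$ and $\tilde g(e^{J\theta})$ are affine in $J$, of the form $A+JB$ with $A,B\in\HH$; consequently $J\mapsto|\tilde f(e^{J\theta})|^2-|\tilde g(e^{J\theta})|^2$ is an affine real-valued function of $J\in\rr^3$ restricted to $\s$, whose zero set in $\s$, being the intersection of $\s$ with an affine hyperplane, has measure zero unless it is all of $\s$. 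The hypothesis $|\tilde f|=|\tilde g|$ almost everywhere on $\p\B$, together with the same Fubini decomposition, forces this zero set to have full measure in $\s$ for almost every $\theta$, hence to equal $\s$: that is, $|\tilde f(e^{J\theta})|=|\tilde g(e^{J\theta})|$ for \emph{every} $J\in\s$ and almost every $\theta$. Taking $J$ to be the imaginary unit of $w$ gives $|\tilde\phi_I(e^{I\theta})|=1$ for almost every $\theta$, for every $I$.

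It remains to conclude. Since $\phi\in H^1(\B)$, Theorem~\ref{PoissonCauchy} represents each $\phi_I$ as the Poisson integral of $\tilde\phi_I$; the Poisson kernel being non-negative with total mass $1$, the bound $|\tilde\phi_I|\le 1$ a.e. yields $|\phi(q)|\le 1$ for every $q\in\B$. By Proposition~\ref{Caterina}, $\phi(q)=f(T_f(q))^{-1}g(T_f(q))$ for every $q\in\B$, so $|g(T_f(q))|\le|f(T_f(q))|$; as $T_f$ maps $\B$ bijectively onto $\B$, this is exactly $|g(w)|\le|f(w)|$ for all $w\in\B$, proving that $f$ is outer. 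I expect the delicate point to be the measure-theoretic one in the third paragraph: the boundary transformation $e^{I\theta}\mapsto w$ is only measurable, so one cannot a priori use the equality $|\tilde g|=|\tilde f|$ at the single point $w$ --- it is precisely the affineness of regular functions on the spheres $x+y\s$ that makes the pointwise identity $|\tilde g(w)|=|\tilde f(w)|$ available.
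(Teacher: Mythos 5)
Your argument is essentially the paper's own: form $h=f^{-*}*g\in H^1(\B)$ via Proposition \ref{holder}, show $|\tilde h|=1$ almost everywhere using Proposition \ref{prodlim}, deduce $|h|\le 1$ on $\B$ from the Poisson representation, and conclude via Proposition \ref{Caterina} and the fact that $T_f$ is a diffeomorphism of $\B$. The two points you elaborate --- upgrading the test function $g$ from $H^1(\B)$ to $H^p(\B)$ before invoking H\"older, and using the affineness of $\tilde f$ and $\tilde g$ on the spheres $\cos\theta+(\sin\theta)\s$ to pass from the identity $|\tilde f|=|\tilde g|$ a.e.\ on $\p\B$ to the pointwise identity at the transformed boundary point --- are precisely the steps the paper's proof leaves implicit (it takes $g\in H^p(\B)$ from the start and simply asserts that the boundary limit equals $1$), so your additions tighten rather than alter the argument.
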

\begin{proof}
Let $g\in H^p(\B)$ be such that $|\tilde{g}|=|\tilde{f}|$ almost everywhere on $\p \B$.
The regular function $h=f^{-*}*g$ belongs to $H^1(\B)$ thanks to Proposition \ref{holder}. Therefore 
recalling Proposition \ref{prodlim}, we get that for almost every $\theta \in [-\pi,\pi)$, 
\begin{equation*}
\begin{aligned}
\lim_{r\to 1^-}|h(re^{I\theta})|&=\lim_{r\to 1^{-}}|f^{-*}*g(re^{I\theta})|\\
&=|\tilde{f}(\tilde{f^c}(e^{I\theta})^{-1}e^{I\theta}\tilde{f^c}(e^{I\theta}))|^{-1}|\tilde{g}(\tilde{f^c}(e^{I\theta})^{-1}e^{I\theta}\tilde{f^c}(e^{I\theta}))|.\\
\end{aligned}
\end{equation*}
Since $f$ and $\tilde g$ coincide almost everywhere at the boundary, we get that for almost every $\theta\in[-\pi,\pi)$,
\[\lim_{r\to 1^-}|h(re^{I\theta})|=1.\]
Therefore the Poisson integral representation formula guarantees that 
\[|h(q)|\le 1\]
for any $q\in\B$.
With the same notation of Proposition \ref{Caterina}, we obtain that for all $q\in \B$
$$
1\ge |f^{-*}*g(q)|=|f(T_f(q))|^{-1}|g(T_f(q))|.
$$
Since $f^{-*}\in H^q(\B)$ then $f$ has no zeros and $T_f$ is a diffeomorphism of $\B$, yielding
$$
|g(q)|\le|f(q)|
$$ 
for any $q\in \B$. We thus conclude that $f$ is an outer function.
\end{proof}

\begin{oss} In the case of a regular function $f: \B \to \HH$ that is continuous and non-vanishing up to the boundary of $\B$, the previous result implies that $f$ is an outer function (since $f$ and $f^{-*}$ both  belong to $H^{\infty}(\B)$).
\end{oss}



For regular functions that preserve a slice $L_I$, the factorization can be done in a stronger and more satisfactory fashion, that very much resembles its complex counterpart. 

\begin{teo}\label{EI}
Let $f\in H^p(\B)$ for some $p\in(0,+\infty]$, be such that $f$ maps $\B_I$ to $L_I$ for some $I\in \s$. Then we can factor $f$ as
\[f(q)=E*\mathcal{I}(q),\]
where $E$ is an outer function in $H^p(\B)$ such that $|\widetilde{E}|=|\tilde f|$ almost everywhere on the boundary $\p \B$ 
and $\mathcal{I}$ is a inner function.
\end{teo}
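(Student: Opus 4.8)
\smallskip
\noindent\textbf{Proof proposal.} We may assume $f\not\equiv0$. The plan is to read the factorization off the classical one on the single preserved slice $L_I$, transport it to $\B$ by the Extension Lemma \ref{extensionlemma}, and then verify that the quaternionic notions of outer and inner function behave as expected. Identifying $L_I$ with $\cc$, the restriction $f_I\colon\B_I\to L_I$ is an ordinary holomorphic function with $f_I\in H^p(\B_I)$ (since $\|f_I\|_p\le\|f\|_p$). The classical $H^p$-factorization (see \cite{duren,Hoffman}) gives $f_I=E_I\mathcal I_I$, where $E_I$ is outer on $\B_I$ — in particular zero-free, with $|\widetilde{E_I}|=|\widetilde{f_I}|$ a.e.\ on $\p\B_I$ — and $\mathcal I_I$ is inner, i.e.\ $|\mathcal I_I|\le1$ on $\B_I$ and $|\widetilde{\mathcal I_I}|=1$ a.e.\ on $\p\B_I$. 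Setting $E=\ext(E_I)$ and $\mathcal I=\ext(\mathcal I_I)$, I would observe that since $E_I$ and $\mathcal I_I$ are $L_I$-valued, their splittings with respect to any $J\perp I$ have vanishing second component, so $E_I*\mathcal I_I=E_I\mathcal I_I=f_I$ by Definition \ref{R-starprodotto}; hence $(E*\mathcal I)_I=f_I$ and, by uniqueness of the regular extension, $f=E*\mathcal I$ on $\B$. Moreover $E\in H^p(\B)$ by Proposition \ref{f_Ihp}, and since $\mathcal I$ maps $\B_I$ into $L_I$ with $|\mathcal I_I|\le1$, Proposition \ref{maxminslice} forces $|\mathcal I|\le1$ on all of $\B$, so $\mathcal I\in H^\infty(\B)$.

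Next I would check that $\mathcal I$ is inner and that $|\widetilde E|=|\tilde f|$ a.e.\ on $\p\B$, working through the symmetrization $\mathcal I^s=\mathcal I*\mathcal I^c$. This function is slice preserving, with $\mathcal I^s_I(z)=\mathcal I_I(z)\overline{\mathcal I_I(\bar z)}$, so $|\widetilde{\mathcal I^s_I}(e^{I\theta})|=|\widetilde{\mathcal I_I}(e^{-I\theta})|\,|\widetilde{\mathcal I_I}(e^{I\theta})|=1$ a.e.; being slice preserving, $\mathcal I^s(x-yI)=\overline{\mathcal I^s(x+yI)}$, hence by Proposition \ref{maxminslice} the modulus of $\mathcal I^s$ is constant on each sphere $x+y\s$, and therefore $|\widetilde{\mathcal I^s}(e^{J\theta})|=1$ for a.e.\ $\theta$ and every $J\in\s$. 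Applying Proposition \ref{prodlim} to $\mathcal I^s=\mathcal I*\mathcal I^c$ gives $1=|\widetilde{\mathcal I^s}(e^{J\theta})|=|\widetilde{\mathcal I}(e^{J\theta})|\,|\widetilde{\mathcal I^c}(w)|$ with $w\in\p\B$; since $|\widetilde{\mathcal I}|\le1$ and $|\widetilde{\mathcal I^c}|\le1$ on $\p\B$ (the latter because $|\mathcal I^c|\le1$ on $\B$ by Proposition \ref{norma}), both factors must equal $1$, so $|\widetilde{\mathcal I}|=1$ a.e.\ on $\p\B$ and $\mathcal I$ is inner. The very same mechanism applied to $f=E*\mathcal I$ then yields $|\tilde f(e^{J\theta})|=|\widetilde E(e^{J\theta})|\,|\widetilde{\mathcal I}(w')|=|\widetilde E(e^{J\theta})|$ for a.e.\ $\theta$ and every $J$, which is the required boundary identity.

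The hard part is to prove that $E$ is outer. I would first note that $E$ is zero-free on $\B$ (by Proposition \ref{zerislice}, a zero of $E$ on a sphere $x+y\s$, or at $x\pm yI$, would produce a zero of $E_I$), so that $E^{-*}$ is regular on all of $\B$. Given $h\in H^1(\B)$ with $|\tilde h|=|\widetilde E|$ a.e.\ on $\p\B$, I would set $g=E^{-*}*h$; by Proposition \ref{Caterina}, $|g(q)|=|E(T_E(q))|^{-1}|h(T_E(q))|$ with $T_E$ a diffeomorphism of $\B$, so proving $|h|\le|E|$ on $\B$ is equivalent to proving $|g|\le1$ on $\B$. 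Two ingredients make this work. First, Proposition \ref{prodlim} combined with $|\tilde h|=|\widetilde E|$ a.e.\ on $\p\B$ gives $|\tilde g(e^{J\theta})|=|\widetilde E(w)|^{-1}|\tilde h(w)|=1$ for a.e.\ $\theta$ and every $J$, where $w\in\p\B$. Second, on the slice $L_I$ one has $g_I=E_I^{-1}h_I$ (because $T_E$ fixes $\B_I$ pointwise); splitting $h_I=H+KJ$ with $H,K\in H^1(\B_I)$ (Proposition \ref{FGHp}), the relation $|\widetilde H|^2+|\widetilde K|^2=|\widetilde{E_I}|^2$ a.e.\ forces $|\widetilde H|,|\widetilde K|\le|\widetilde{E_I}|$, and the classical theory of outer functions (see \cite{Hoffman}) then gives $H/E_I,\,K/E_I\in H^\infty(\B_I)$ of sup-norm $\le1$; thus $|g_I|^2=|H/E_I|^2+|K/E_I|^2\le2$ on $\B_I$, so $g_I\in H^\infty(\B_I)$ and hence $g\in H^\infty(\B)$ by Proposition \ref{f_Ihp}.

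With $g$ now bounded on $\B$, I would finish by a harmonic-majorant argument on each slice: for $L\in\s$ split $g_L=P+Q\,J'$ with $P,Q\in H^\infty(\B_L)\subset H^2(\B_L)$ (Proposition \ref{FGHp}); since $|\widetilde P|^2+|\widetilde Q|^2=|\tilde g_L|^2=1$ a.e.\ on $\p\B_L$ and $|P|^2,|Q|^2$ are dominated by the Poisson integrals of their boundary values, $|g_L(z)|^2=|P(z)|^2+|Q(z)|^2\le\mathcal P\big[\,|\widetilde P|^2+|\widetilde Q|^2\,\big](z)=1$ on $\B_L$, and letting $L$ range over $\s$ gives $|g|\le1$ on $\B$, so $E$ is outer. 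I expect this last step to be the main obstacle: the boundary identity restricted to the single slice $L_I$ only yields $|h_I|\le\sqrt2\,|E_I|$, so one is genuinely forced to exploit the hypothesis $|\tilde h|=|\widetilde E|$ on \emph{all} of $\p\B$ and to run the harmonic-majorant estimate on every slice simultaneously, once the outer-ness of $E_I$ has been used to push $E^{-*}*h$ into a Hardy space.
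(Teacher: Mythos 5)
Your proposal is correct, and its skeleton coincides with the paper's: factor $f_I$ classically on the preserved slice, extend by the Extension Lemma \ref{extensionlemma} and the Identity Principle to get $f=E*\mathcal I$, control $|\mathcal I|$ via Proposition \ref{maxminslice}, and use the diffeomorphism $q\mapsto E(q)^{-1}qE(q)$ of Proposition \ref{Caterina} to propagate the majorization $|E|\ge|g|$ from the slice to all of $\B$. The differences are tactical and sit in the two verification steps. For $|\widetilde{\mathcal I}|=1$ a.e.\ on $\p\B$ you pass through the symmetrization $\mathcal I^s$ and Proposition \ref{prodlim}, whereas the paper argues directly from Proposition \ref{maxminslice}: since $|\widetilde{\mathcal I}_I(x\pm yI)|=1$ for almost every boundary point, both $\max_J$ and $\min_J$ of $|\widetilde{\mathcal I}(x+yJ)|$ equal $1$ on almost every boundary sphere; both routes are sound. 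For the outer-ness of $E$, you split the competitor $h_I=H+KJ$ and divide each component by the outer function $E_I$ (the classical ``division by an outer function'' theorem) to land in $H^\infty(\B_I)$, then finish with a Poisson/harmonic-majorant estimate on \emph{every} slice using $|\widetilde P|^2+|\widetilde Q|^2=1$; the paper instead compares $\log|E_I|=P[\log|\tilde f_I|]=P[\log|\tilde g_I|]$ with the subharmonic function $\log|g_I|$ (subharmonicity of the modulus of a quaternion-valued regular restriction was established in the proof of Proposition \ref{incr}), obtaining $|(E^{-*}*g)_I|\le 1$ on the slice in one stroke, and then upgrades to the global bound via $\|E^{-*}*g\|_\infty=\|\widetilde{E^{-*}*g}\|_{L^\infty}=1$ from Proposition \ref{p=lp}. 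Your version avoids the quaternionic subharmonicity lemma at the cost of the intermediate constant $\sqrt2$, which your final slicewise majorant step removes; the paper's version avoids running an estimate on every slice. One point where you are terser than the paper: each time you evaluate $|\widetilde{\mathcal I}|$ or $|\tilde h|/|\widetilde E|$ at a transported boundary point $w$ lying on the sphere $\cos\theta+(\sin\theta)\s$, you need the relevant a.e.\ identity to hold at \emph{every} point of almost every boundary sphere; this is exactly what Proposition \ref{quasiognisfera}, together with the fact (proved in the paper by solving $J(b-cK)=c+bK$) that $\widetilde{T^{-1}}$ maps almost every boundary sphere onto itself, is there to guarantee, and you should invoke it explicitly.
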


\begin{proof}
The restriction of $f$ to $\B_I$ is a (complex) holomorphic function $F_I:\B_I \to L_I$ mapping $z\mapsto f_I(z)$. 
Let us define the function $E_I\colon \B_I \to L_I$ to be
\[E_I(z)=\exp\left(\frac{1}{2\pi}\int_{-\pi}^{\pi}\frac{e^{I\theta}+z}{e^{I\theta}-z}\log|F_I(e^{I\theta})|d\theta\right), \]
namely the outer factor of $F_I$.
From factorization results in the complex setting, (see for instance \cite{Hoffman}), we know
that we can write
\[F_I(z)=E_I(z)\mathcal{I}_I(z)\]
where $\mathcal{I}_I(z)$ is the inner factor of $F_I$.
In particular, since both $E_I$ and $\mathcal{I}_I$ map $\B_I$ to $L_I$, we can also write
\[ F_I(z)=E_I(z)\mathcal{I}_I(z)=E_I(z)*\mathcal{I}_I(z).\]
Hence
\[f(q)=\ext(f_I)(q)=\ext(F_I)(q)=\ext(E_I*\mathcal{I}_I)(q)=\ext(E_I)*\ext(\mathcal{I}_I)(q)\]
where the last equality is due to the Identity Principle for regular functions, \cite{libroGSS}.
Let us set $E(q)=\ext(E_I)(q)$ and $\mathcal{I}(q)=\ext(\mathcal{I}_I)(q)$.
Since $E_I(z)\neq 0$ for all $z\in \B_I$, Proposition \ref{zerislice} yields
that also $E$ is never vanishing on $\B$.
To estimate the modulus of $\mathcal{I}$, recall that a function that maps the slice $L_I$ to itself has the following properties 
(see Proposition \ref{maxminslice})
\begin{equation}\label{Massimo}
\max_{J\in \s}|\mathcal{I}(x+yJ)|=\max\{|\mathcal{I}(x+yI)|,\, |\mathcal{I}(x-yI)|\}=\max\{|\mathcal{I}_I(x+yI)|,\, |\mathcal{I}_I(x-yI)|\}\end{equation}
and 
\[\min_{J\in \s}|\mathcal{I}(x+yJ)|=\min\{|\mathcal{I}(x+yI)|,\, |\mathcal{I}(x-yI)|\}=\min\{|\mathcal{I}_I(x+yI)|,\, |\mathcal{I}_I(x-yI)|\}\]
for all $x,y$ such that $x+yI\in \B_I$. 
By classical results, the inner function $\mathcal I_I$ is bounded in modulus by $1$ and its uniform norm equals $1$, therefore we get, 
thanks to \eqref{Massimo} 
\[||\mathcal{I}||_{\infty}=\sup_{q\in\B}|\mathcal{I}(q)|=\sup_{z\in\B_I}|\mathcal{I}_I(z)|=||\mathcal{I}_I||_{\infty}=1.\]
As a consequence
\begin{equation}\label{minorediuno}
|\mathcal I(q)| \le 1 
\end{equation}
for all $q\in \B$.
%
Moreover, since $|\widetilde{\mathcal{I}}_I|$ equals $1$ almost everywhere on the boundary $\p \B_I$, we have that, 
for almost every $x+yI$ such that $x^2+y^2=1$,
\[\max\sx\{|\widetilde{\mathcal{I}}_I(x+yI)|,\, |\widetilde{\mathcal{I}}_I(x-yI)|\dx\}=\min\sx\{|\widetilde{\mathcal{I}}_I(x+yI)|,\, |\widetilde{\mathcal{I}}_I(x-yI)|\dx\}=1.\]
Therefore, for almost every $x+yJ$ such that $x^2+y^2=1$,
\[\max_{J\in \s}\sx|\widetilde{\mathcal{I}}(x+yJ)\dx|=\min_{J\in\s}\sx|\widetilde{\mathcal{I}}(x+yJ)\dx|=1\]
namely for almost every $q\in\p \B$, $|\widetilde{\mathcal{I}}(q)|=1$. 
To obtain the wanted properties of the modulus of $E$, let us denote by $T$ the transformation
\[T(q)=(E^c(q))^{-1}q E^c(q).\]
Since $E^c$ is non-vanishing on $\B$, $T$ is a diffeomorphism of $\B$ with inverse
\[
T^{-1}(q)=(E(q))^{-1}q E(q).
\]
 Then, thanks to Proposition \ref{Caterina} and to inequality \eqref{minorediuno}, we can write 
\begin{equation}\label{Ef}
1\ge |\mathcal{I}(T^{-1}(q))|
=\left| E^{-*}*f(T^{-1}(q))\right|=\left|E^{-1}(q)f(q)\right|=\left|E(q)|^{-1}|f(q)\right|
\end{equation}
for any $q\in \B$. Hence 
\[|E(q)|\ge|f(q)| \quad \text{for any $q\in \B$}.\]
The study of the behavior of the modulus $|\widetilde{E}|$ at the boundary, requires some more effort.
First of all, classical results on outer functions (see e.g. \cite{Hoffman}) imply that $E_I\in H^p(\B_I)$. 
Then, thanks to Proposition \ref{f_Ihp} we get that $E\in H^p(\B)$. 
%
Therefore, for any $J\in\s$, the function $E$ has radial limit $\widetilde{E}$ for almost any $x+yJ \in \p \B_J$. 
Hence, also the transformation $T^{-1}(q)=(E(q))^{-1}q E(q)$ does. 
This allows us to prove that $\widetilde{T^{-1}}$ maps almost every $2$-sphere $x+y\s \subset \p \B$ one-to-one onto itself.
In fact, for almost every sphere $x+y\s \subset \p \B$ the function $\widetilde{E}$ is defined at all points of $x+y\s$ (see Proposition \ref{quasiognisfera}). 
For such a sphere $x+y\s$, thanks to the Representation Formula \ref{RF} and to the fact that $E$ preserves $L_I$, there exist $b,c\in L_I$ such that 
\[\widetilde{E}(x+yJ)=b+Jc \quad \text{for any $J\in\s$}.\]
Take $x+yK\in x+y\s$, with $K\neq I$. We are going to show that we can find the only $J\in \s$ such that
$\widetilde{T^{-1}}(x+yJ)=x+yK$. This is possible if and only if
\[\left(\widetilde{E}(x+yJ)\right)^{-1}q\widetilde{E}(x+yJ)=  (b+Jc)^{-1}(x+yJ)(b+Jc)   =x+yK.\]
Since $x,y$ are real numbers, this is equivalent to
\[J(b-cK)=c+bK.\]
Now, since $K\neq I$ and since $b,c\in L_I$, necessarily $b-cK\neq 0$. Hence 
\[J=(bK+c)(b-cK)^{-1}\]
solves our problem.
Thanks to this property of the map $T^{-1}$, 
we get that $1= |\widetilde {\mathcal I} (\widetilde {T^{-1}}(q))|$ for almost every $q\in \p \B$. The radial limit version of equation \eqref{Ef}, holding for almost every $q\in\p\B$,
\begin{equation*}
1=|\widetilde {\mathcal I} (\widetilde {T^{-1}}(q))| =|\widetilde{E}(q)|^{-1}|\tilde{f}(q)|
\end{equation*}
leads to the equality
\begin{equation}\label{massimale}
|\widetilde{E}(q)|=|\tilde{f}(q)|
\end{equation}  
for almost every $q\in \p \B$,
which leads also to $||E||_p=||\widetilde{E}||_{L^p}=||\tilde f||_{L^p}=||f||_p.$
We want to show now that $E$ is an outer function. To this aim, let $g\in H^p(\B)$ be such that 
\begin{equation}\label{E=g}
|\widetilde E(q)|=|\tilde g(q)|
\end{equation}
for almost all $q\in \p \B$. If we restrict $E$ and $g$ to $\B_I$, and recall the definition of $E_I$, we can write (see Proposition \ref{quasiognisfera})
\begin{equation}
\begin{aligned}
\log |E_I(re^{I\theta})|&=\frac1{2\pi}\int_{-\pi}^{\pi}P_r(\theta-t)\log |\tilde f_I(e^{It})|dt\\
&=\frac1{2\pi}\int_{-\pi}^{\pi}P_r(\theta-t)\log |\tilde g_I(e^{It})|dt \ge \log |g_I(re^{I\theta})|
\end{aligned}
\end{equation}
where $P_r(t)$ is the Poisson kernel and where last inequality is due to the subharmonicity of $\log |g_I|$ (see the proof of Proposition \ref{incr}). As a consequence we get that 
\[
1\ge |E_I(z)|^{-1}|g_I(z)|= |E^{-*}_I*g_I(z)| = |(E^{-*}*g)_I(z)|
\]
for all $z\in \B_I$. Since $E^{-*}*g$ is regular on $\B$, using Proposition \ref{f_Ihp} we obtain that $E^{-*}*g$ belongs to $H^{\infty}(\B)$.  Then the radial limit of $E^{-*}*g$ exists at almost every point of $\p\B$. Equation \eqref{E=g} and Proposition \ref{prodlim} guarantee that for every $J\in \s$
\[
\lim_{r\to 1^-} |E^{-*}*g(re^{J\theta})| = |\widetilde{E^{-*}*g}(e^{J\theta})|=1
\]
for almost every $\theta \in [-\pi, \pi)$. Hence, by Proposition \ref{p=lp}, 
\[
||E^{-*}*g ||_{\infty} = ||\widetilde{E^{-*}*g} ||_{L^{\infty}} = 1.
\]
Recalling now that  
$
T^{-1}(q)=(E(q))^{-1}q E(q)
$
is a diffeomorphism of $\B$, thanks to Proposition \ref{Caterina} we obtain
\[
1\ge \left| E^{-*}*g(T^{-1}(q))\right|=\left|E^{-1}(q)g(q)\right|=\left|E(q)|^{-1}|g(q)\right|
\]
i.e.,
\[
|E(q)|\ge |g(q)|
\]
for all $q\in \B$. This concludes the proof.
\end{proof}

\begin{coro}
Let $f\in H^p(\B)$ for some $p\in(0,+\infty]$, be such that $f$ maps $\B_I$ to $L_I$ for some $I\in \s$. Then there exist an outer function $E\in H^p(\B)$, a singular function $S\in H^{\infty}(\B)$ and a Blaschke product $B$ such that 
\[
f(q)= E*S*B(q)
\]
for all $q\in \B$.
\end{coro}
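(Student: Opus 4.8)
The plan is to reduce the statement to Theorem~\ref{EI} followed by the classical factorization of a complex inner function into a singular factor and a Blaschke product, transported back to $\B$ via the Extension Lemma~\ref{extensionlemma}. First I would apply Theorem~\ref{EI} to the given $f$, obtaining $f=E*\mathcal I$ with $E$ an outer function in $H^p(\B)$ and $\mathcal I$ an inner function. Inspecting the proof of that theorem, one has $E=\ext(E_I)$ and $\mathcal I=\ext(\mathcal I_I)$ with $E_I,\mathcal I_I\colon\B_I\to L_I$; hence both $E$ and $\mathcal I$ preserve the slice $L_I$, and $\mathcal I_I$ is, after identifying $L_I$ with $\cc$, a classical inner function on the disc $\B_I$.

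Next I would invoke the classical inner factorization (see \cite{Hoffman}): write $\mathcal I_I=S_I\cdot B_I$, where $S_I\colon\B_I\to L_I$ is a singular inner function (non-vanishing, of modulus $\le 1$, with radial boundary modulus $1$ almost everywhere) and $B_I\colon\B_I\to L_I$ is a Blaschke product whose zero sequence $\{a_n^I\}\subset\B_I$ — the zero sequence of $\mathcal I_I$, hence of $f_I$ — satisfies $\sum_n(1-|a_n^I|)<+\infty$. Since both factors preserve $L_I$, on the slice the pointwise product coincides with the $*$-product (Lemma~\ref{realproduct}), and the same use of the Identity Principle as in the proof of Theorem~\ref{EI} gives $\mathcal I=\ext(\mathcal I_I)=\ext(S_I)*\ext(B_I)$. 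Setting $S:=\ext(S_I)$ and $B:=\ext(B_I)$, associativity of the $*$-product yields $f=E*\mathcal I=E*S*B$, so it remains to check that $S$ is a singular function and $B$ a Blaschke product.

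That $S$ is inner follows from the slice-preserving toolkit exactly as for $\mathcal I$ in Theorem~\ref{EI}: Proposition~\ref{maxminslice} gives $\|S\|_\infty=\|S_I\|_\infty=1$, so $|S(q)|\le 1$ on $\B$, and the same sphere-wise argument applied to radial limits (Proposition~\ref{quasiognisfera}) gives $|\widetilde S(q)|=1$ for almost every $q\in\p\B$. Moreover $S$ is non-vanishing on $\B$: a zero at some $x+yJ$ would, by Proposition~\ref{zerislice}, force a zero on the whole sphere $x+y\s$ — in particular at $x\pm yI\in\B_I$ — contradicting that $S_I$ has no zeros; hence $S$ is a singular function. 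For $B$, I would first check that the regular extension of the complex Blaschke factor with a zero $a\in\B_I$ is the quaternionic Blaschke factor $M_a$: since $a\in L_I$, the $*$-operations in $M_a(q)=(1-q\overline a)^{-*}*(a-q)\frac{\overline a}{|a|}$ restrict on $L_I$ to ordinary operations in the commutative field $L_I$, so $M_a$ restricted to $\B_I$ is precisely $z\mapsto(1-z\overline a)^{-1}(a-z)\frac{\overline a}{|a|}$, and uniqueness of the extension does the rest. As $\ext$ turns products of slice-preserving holomorphic functions into $*$-products of extensions, the partial products of $B_I$ extend to the finite regular products $\stella_{n\le N}M_{a_n^I}$; using the estimate of the maximum modulus of a regular function by its maximum modulus on a slice (a consequence of the Representation Formula~\ref{RF}), these converge uniformly on compact subsets of $\B$ to $\ext(B_I)=B$. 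Thus $B=\stella_{n\ge0}M_{a_n^I}$, a Blaschke product (its convergence is also covered directly by Proposition~\ref{Blaschkef}, since $\{a_n^I\}$ satisfies the Blaschke condition), which completes the argument.

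I expect the only genuinely delicate point to be this last identification of $\ext(B_I)$ with a quaternionic Blaschke product in the sense of the paper, together with the verification — resting on Proposition~\ref{zerislice} — that the extended singular factor $S$ both stays inner and remains zero-free on all of $\B$; the remainder is a routine assembly of Theorem~\ref{EI}, the classical complex inner factorization, and the preliminary results recalled above.
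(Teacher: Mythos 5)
Your argument is correct, but it follows a genuinely different route from the paper's. The paper also begins with Theorem \ref{EI} to write $f=E*\mathcal I$, but it then feeds the inner factor $\mathcal I$ into the quaternionic zero-extraction Theorem \ref{singolare1}, obtaining $\mathcal I=S*B$ with $S$ non-vanishing and $B$ a Blaschke product, and finally argues (rather tersely, ``following the lines of the proof of Theorem \ref{EI}'') that $S$ lies in $H^\infty(\B)$ and is a singular function. You instead exploit the slice-preserving hypothesis a second time: you factor the classical inner function $\mathcal I_I=S_I B_I$ on the preserved slice and transport the factorization back with $\ext$, using Lemma \ref{realproduct} and the uniqueness in Lemma \ref{extensionlemma} to get $\mathcal I=\ext(S_I)*\ext(B_I)$. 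The paper's route buys generality -- Theorem \ref{singolare1} applies to arbitrary $H^p$ functions, so the zero-extraction step does not depend on $f$ preserving a slice -- but it leaves the innerness of $S$ (the bound $|S|\le 1$ and the unimodularity of $\widetilde S$ almost everywhere) to the reader. Your route makes every property of $S$ and $B$ an inheritance from the classical factors via Propositions \ref{maxminslice}, \ref{quasiognisfera} and \ref{zerislice}, and yields the extra information that $S$ and $B$ are themselves $L_I$-preserving with all the zeros of $B$ lying on the slice $L_I$ (whereas the Blaschke factor produced by Theorem \ref{singolare1} involves the regular conjugate $B^c_{\overline\beta}$ and is not slice-preserving in general). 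The two points you flag as delicate -- the identification of $\ext(B_I)$ with the quaternionic Blaschke product $\stella_{n\ge0}M_{a_n^I}$, legitimate because each $M_{a_n^I}$ restricts on $\B_I$ to the normalized classical factor and convergence transfers from $\B_I$ to $\B$ by the Representation Formula \ref{RF} (or directly by Proposition \ref{Blaschkef}), and the zero-freeness of $S=\ext(S_I)$ via Proposition \ref{zerislice} -- are indeed the only places requiring care, and you handle both correctly; the unimodular constant in the classical factorization $\mathcal I_I=cB_IS_I$ can be absorbed into $S_I$ without affecting the paper's definition of singular function.
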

\begin{proof} Theorem \ref{EI} allows us to factor $f=E*\mathcal I$ as a $*$-product of an outer function $E$ and an inner function $\mathcal I$. Theorem \ref{singolare1} guarantees now the existence of a Blaschke product $B$ and a non-vanishing  function $S$ such that $\mathcal I = S*B$. Following the lines of the proof of Theorem \ref{EI} that led us to show that the outer factor of $f$ belongs to $H^{p}(\B)$, one can prove that $S$ belongs to $H^{\infty}(\B)$ and hence is a singular function.
\end{proof}
We point out that the Beurling-Lax type Theorem in \cite{milanesi}, and the Krein-Langer type  factorization theorem that appears in \cite{3,4}, are naturally connected to our factorization results.


\begin{thebibliography}{88}

\bibitem{milanesi2}D. Alpay, F. Colombo, I. Sabadini, {\em Schur functions and their realizations
in the slice hyperholomorphic setting}, Integral Equations Operator Theory {\bf 72} (2012), 253--289.  

\bibitem{milanesi}D. Alpay, F. Colombo, I. Sabadini, {\em Pontryagin De Branges Rovnyak spaces of slice hyperholomorphic functions}, J. Anal. Math., {\bf 121} (2013), 87--125.
\bibitem{3} D. Alpay, F. Colombo, I. Sabadini, {\em Krein-Langer factorization and related topics in the slice hyperholomorphic setting}, J. Geom. Anal., {\bf 24} (2014), 843-872.

\bibitem{4} D. Alpay, F. Colombo, I. Sabadini, {\em Generalized quaternionic Schur functions in the ball and half-space and Krein-Langer factorization} in Hypercomplex Analysis: New Perspectives and Applications, (S. Berstein et al. eds) Trends in Mathematics, 19-41 (2014).

\bibitem{10} D. Alpay, F. Colombo, I. Sabadini, G. Salomon
{\em The Fock space in the slice hyperholomorphic setting,} in Hypercomplex Analysis: New perspectives and applications,   (S. Berstein et al. eds), Trends in Mathematics (Birkh\"auser, Basel), (2014), 43--59.

\bibitem{1} D. Alpay, V. Bolotnikov, F. Colombo, I. Sabadini, {\em Self-mappings of the
quaternionic unit ball: multiplier properties, Schwarz-Pick inequality, and Nevanlinna--Pick interpolation problem}, to appear in: Indiana Univ. Math. J.

\bibitem{2} D. Alpay, F. Colombo,  I. Lewkowicz,  I. Sabadini, {\em Realizations of slice hyperholomorphic generalized contractive and
 positive functions}, to appear in Milan J. Math.


\bibitem{artAS} N. Arcozzi, G. Sarfatti, {\em Invariant metrics for the quaternionic Hardy space}, online first in J. Geom. Anal.


\bibitem{bernstein} S. Bernstein, K. G\"urlebeck, L. F. Resendis O., L.M. Tovar S., {\em Dirichlet and Hardy Spaces of Harmonic and Monogenic Functions}, Jour. Anal. and its Appl. {\bf 24} (2005), No. 4, 763-789.

\bibitem{5} C. M. P. Castillo Villalba, F. Colombo, J. Gantner, J. O. Gonz\'alez-Cervantes,
{\em Bloch, Besov and Dirichlet spaces of slice hyperholomorphic functions},
to appear in Complex Analysis and Operator Theory.


\bibitem{ext}F. Colombo, G. Gentili, I. Sabadini, D. Struppa,  {\em Extension results for slice regular functions of a quaternionic variable}, Adv. Math., {\bf 222} (2009), 1793-1808. 

\bibitem{kernel} F. Colombo, G. Gentili, I. Sabadini, {\em  A Cauchy kernel for slice regular functions}, Ann. Global Anal. Geom. {\bf 37} (2010), 361-378.

\bibitem{libro2} F. Colombo, I. Sabadini, D. C. Struppa, {\em Noncommutative functional calculus. Theory and applications of slice hyperholomorphic functions}, Progress in Mathematics, vol. 289, Birkh\"auser/Springer, Basel, 2011.

\bibitem{6} F. Colombo, J. O. Oscar Gonz\'alez-Cervantes, I, Sabadini,
{\it On slice biregular functions and isomorphisms of Bergman spaces},
Compl. Var. Ell. Eq.,  {\bf 57}  (2012), 825.


\bibitem{7} F. Colombo, J. O. Gonz\'alez-Cervantes, M. E. Luna-Elizarraras, I. Sabadini, M. V. Shapiro,
in Advances in Hypercomplex Analysis; Springer INdAM Series {\bf 1} (2013), 39-55

\bibitem{8} F. Colombo, J. O. Gonz\'alez-Cervantes,  I. Sabadini, 
{\em  The C-property for slice regular functions and applications to the Bergman space},
 Compl. Var. Ell. Eq., {\bf 58} (2013) 1355.

\bibitem{Gonzales} F. Colombo, J. O. Gonz\'alez-Cervantes, I. Sabadini, {\em Some integral representations of slice-hyperholomorphic functions}, Moscow Math. J. {\bf 14} (2014), 473-489.

\bibitem{9} F. Colombo, J. O. Gonz\'ales-Cervantes, I. Sabadini,
{\em  Further properties of the Bergman theory for slice regular functions},
to appear in Advances in Geom.





\bibitem{duren}P. L. Duren, {\em Theory of $H^p$ spaces}, Pure and Applied Mathematics, Vol. 38, Academic Press, New York-London, 1970. 



\bibitem{OCS} G. Gentili, S. Salamon, C. Stoppato, {\em Twistor transforms of quaternionic functions and orthogonal complex structures}, to appear in J. Eur. Math. Soc., arXiv:1205.3513v1 [math.DG], (2012).



\bibitem{libroGSS}G. Gentili, C. Stoppato, D. C. Struppa, {\em Regular functions of a quaternionic variable}, Springer Monographs in Mathematics, Springer, Berlin-Heidelberg, 2013.
\bibitem{G.S.}G. Gentili, D. C. Struppa, {\em A new approach to Cullen-regular functions of a quaternionic variable}, C. R. Math. Acad. Sci. Paris, {\bf 342}  (2006), 741-744. 
\bibitem{GSAdvances} G. Gentili, D. C. Struppa, {\em A new theory of regular functions of a quaternionic variable}, Adv. Math., {\bf 216} (2007), 279-301. 
\bibitem{milan} G. Gentili, D. C. Struppa, {\em On the multiplicity of zeroes of polynomials with quaternionic coefficients}, Milan J. Math., {\bf 76} (2008), 15-25.
\bibitem{weierstrass} G. Gentili, I. Vignozzi, {\em The Weierstrass factorization theorem for slice regular functions over the quaternions}, Ann. Global Anal. Geom., {\bf 40} (2011), 435-466.



\bibitem{ghiloniperotti1} R. Ghiloni, A. Perotti, \emph{Slice regular functions on real alternative algebras}, Adv. Math., {\bf 226} (2011), 1662-1691.




\bibitem{Hoffman} K. Hoffman, {\em Banach spaces of analytic functions}, Prentice-Hall, Series in Modern Analysis Prentice-Hall, Englewood Cliffs, N.J., 1962.

  

                                   

\bibitem{Rudin} W. Rudin, {\em Real and complex analysis}, McGraw-Hill Book Co., New York-Toronto, Ont.-London, 1966


\bibitem{sarfatti} G. Sarfatti, \emph{Elements of function theory in the unit ball of quaternions}, Ph.D. Thesis, Universit\`a di Firenze, http://www.students.math.unifi.it/users/sarfatti/PhDGiulia.pdf, 2013.

\bibitem{regular} C. Stoppato, {\em Regular Moebius transformations of the space of quaternions},  Ann. Global Anal. Geom., {\bf 39} (2010), 387-401.





\end{thebibliography}
\end{document}